\theoremstyle{plain}
\newtheorem{thm}{\protect\theoremname}[section]
\theoremstyle{definition}
\newtheorem{defn}[thm]{\protect\definitionname}
\theoremstyle{remark}
\newtheorem{rem}[thm]{\protect\remarkname}
\theoremstyle{plain}
\newtheorem{lem}[thm]{\protect\lemmaname}
\theoremstyle{plain}
\newtheorem{prop}[thm]{\protect\propositionname}
\theoremstyle{plain}
\newtheorem{cor}[thm]{\protect\corollaryname}
\theoremstyle{definition}
\newtheorem*{problem*}{\protect\problemname}
\newtheorem*{funding*}{\protect\fundingname}
\providecommand{\corollaryname}{Corollary}
\providecommand{\definitionname}{Definition}
\providecommand{\lemmaname}{Lemma}
\providecommand{\problemname}{Problem}
\providecommand{\propositionname}{Proposition}
\providecommand{\remarkname}{Remark}
\providecommand{\theoremname}{Theorem}
\begin{document}
\title{Components of discriminants for systems of equations and irreducibility
of determinants}
\author{Vladislav Pokidkin\thanks{Email: vppokidkin@hse.ru\protect \\
National Research University Higher School of Economics, Russian Federation\protect \\
AG Laboratory, HSE, 6 Usacheva str., Moscow, Russia, 119048}}
\maketitle
\begin{abstract}
The discriminant of a multivariate polynomial with indeterminate coefficients
is not necessarily a hypersurface, and characterizing its codimension
was an open problem for quite a while. We resolve this problem for
the discriminants of systems of polynomials with indeterminate coefficients
and with the same number of equations and unknowns (square polynomial
systems). This version is more involved in the sense that the discriminant
may have several components of different dimensions. 

In the space of square matrices, we characterize row-generated subspaces
on which the determinant is an irreducible polynomial. This allows
us to resolve the Esterov conjecture for square polynomial systems
whose discriminant is an irreducible hypersurface. Based on this result,
we enumerate all the components and determine their dimensions and
degrees for each of the three conventional ways to formalize the notion
of a discriminant in this setting (mixed, Cayley, and A-discriminants)
in cases of square and overdetermined systems. The proof of Esterov's
conjecture and descriptions of the three types of discriminants are
based on the theory of polymatroids.
\end{abstract}
\textbf{Keywords:} determinants, Esterov conjecture, discriminants
of polynomial systems, Cayley discriminants, mixed discriminants,
realizable polymatroids.

\section{Introduction}

Given an algebraic torus $T\simeq(\mathbb{C}^{\times})^{n}$ with
its character lattice $M\simeq\mathbb{Z}^{n}$, a finite set of monomials
$A\subset M$ generates a vector space of Laurent polynomials, denoted
by $\mathbb{C}_{A}$. For a tuple of finite sets $\mathscr{A}=(A_{1},\ldots,A_{k})\subset M$,
starting from \cite{gelfand_discriminants_1994}, lots of attention
is paid to the $\mathscr{A}$\textit{-discriminant} $D_{\mathscr{A}}\subset\mathbb{C}_{\mathscr{A}}=\mathbb{C}_{A_{1}}\oplus\cdots\oplus\mathbb{C}_{A_{k}}$,
the closure of all tuples of polynomials $(f_{1},\ldots,f_{k})$ such
that the system of equations $f_{1}=\cdots=f_{k}=0$ has a \textit{degenerate
root} (i.e. a point $x\in T$ at which $f_{1}(x)=\cdots=f_{k}(x)=0$
and $df_{1}\wedge\ldots\wedge df_{k}(x)=0$). The motivation varies
from algebraic geometry, algebraic statistics \cite{huh_likelihood_2014,amendola_maximum_2019}
and PDEs \cite{gelfand_hypergeometric_1989,gelfand_generalized_1990}
to mathematical physics \cite{vanhove_feynman_2019,mizera_landau_2022,matsubara-heo_four_2023}
and symbolic algebra \cite{sturmfels_solving_2002}.

A question of particular interest is the classification of tuples
$\mathscr{A}$, for which the discriminant is not a hypersurface.
For instance, the case $k=1$ is equivalent to the classical problem
of dual defective varieties (whose projectively dual is not a hypersurface),
for the special case of toric varieties. This problem was resolved
in \cite{di_rocco_projective_2006,curran_restriction_2007,dickenstein_tropical_2007,esterov_newton_2010,matsui_geometric_2011,furukawa_combinatorial_2021,cattani_non-splitting_2022}.
At the other extreme, for $n=k$, the discriminant was shown to have
a hypersurface component if the support sets $A_{1},\ldots,A_{n}$
cannot be shifted to an affine plane or the tuple of standard simplexes
by an automorphism of the lattice. This was proved in \cite{esterov_galois_2019,borger_defectivity_2020},
motivated by applications in Galois theory and lattice polytope geometry
respectively.

It is possible to define two more types of discriminants. For a tuple
$\mathscr{A},$ the \textit{mixed discriminant} is the closure of
all polynomial systems in $\mathbb{C}_{\mathscr{A}}$ having a \textit{non-degenerate
multiple root} (i.e. a degenerate root $x\in T$ such that no proper
subtuple of $df_{1},...,df_{k}(x)$ is linearly dependent). Mixed
discriminants were introduced in \cite{cattani_mixed_2013} and were
investigated in \cite{dokken_plane_2014,esterov_galois_2019,dickenstein_iterated_2023}.

For a subset $I\subseteq\{1,\ldots,k\}$, the \textit{Cayley trick}
for the subtuple $\mathscr{B}=(A_{i},\;i\in I)$ is a map sending
a polynomial system $f\in\mathbb{C}_{\mathscr{A}}$ to the polynomial
$\sum_{i\in I}\lambda_{i}f_{i}(x)$ in variables $x$ and $\lambda$.
The support of this polynomial is the \textit{Cayley set} $cay(\mathscr{B})=\cup_{i\in I}A_{i}\times\{i\}\subset M\times\mathbb{Z}^{|I|}$.
The \textit{Cayley discriminant} of a subtuple $\mathscr{B}$ is the
preimage of the discriminant for the Cayley set $cay(\mathscr{B})$
under the Cayley trick \cite{esterov_newton_2010,esterov_galois_2019}. The Cayley discriminant was introduced as an intermediate object allowing one to reduce the study of $\mathscr{A}$-discriminants of polynomial
systems to $A$-discriminants of one polynomial (which is much simpler
and well-understood).

In case $n=k$, if the Cayley discriminant is a hypersurface, then
the mixed discriminant is the same hypersurface \cite{cattani_mixed_2013}.
For tuples, called irreducible, Esterov showed that the three types
of discriminants have the same hypersurface component and conjectured
a lack of other components for $\mathscr{A}$-discriminants \cite{esterov_galois_2019}.
We prove the conjecture, and the three types of discriminants are
the same hypersurface for an irreducible tuple.

The following result of independent interest was crucial for the proof
of Esterov's irreducibility theorem. In an $n$-dimensional vector
space $V$ over an algebraically closed field with the Zariski topology,
a tuple of vector subspaces $(L_{1},...,L_{n})$ is \textit{irreducible}
if no $k$ of them lie in the same $k$-dimensional subspace for $0<k<n$.
Then the product of vector spaces $V^{\times n}$ is isomorphic to
the space of $n\times n$ square matrices. The space of square matrices
contains the determinant hypersurface --- the set of matrices with
a zero determinant.
\begin{thm}
For an irreducible tuple $(L_{1},...,L_{n})$ of vector subspaces
in an $n$-dimensional vector space over an algebraically closed field,
the intersection of the determinant hypersurface with the subspace
$L_{1}\times...\times L_{n}$ is irreducible in the space of $n\times n$
square matrices \textup{(Theorem \ref{Theorem. Intersection of the determinant with a subspace})}.
\end{thm}

The proof is based on the polymatroid partition of the dual vector
space \cite{pokidkin_combinatorics_2025}. 

When $1<k<n$, the study of discriminants is substantially more complicated
than in the classical case $k=1$ and in our case $n\leq k$: see
\cite{esterov_newton_2010,esterov_discriminant_2013,dickenstein_iterated_2023}
for some partial results.

We study the case $n\leq k$ more comprehensively: for arbitrary support
sets $\mathscr{A}=(A_{1},\ldots,A_{k})\subset M$, we give the complete
list of the irreducible components of the $\mathscr{A}$-discriminant,
the Cayley discriminant, the mixed discriminant in $\mathbb{C}_{\mathscr{A}}$,
specifying their codimensions and degrees. It turns out that the realizable
polymatroid associated with the tuple $\mathscr{A}$ encodes each
of the three types of discriminants and the resultant. This link between
the theory of polymatroids and algebraic geometry differs from the
one established in works \cite{pagaria_hodge_2023,crowley_bergman_2024}. 

The answer is stated in terms of the following fundamental quantity:
the \textit{defect} of a subtuple $\mathscr{B}$ is the number $\delta(\mathscr{B})=\dim($the
affine span of the Minkowski sum $\sum_{i\in I}A_{i})-|I|$. A tuple
is\textit{ dependent} if it contains a subtuple with negative defect.

We first describe discriminants for dependent tuples: this simplest
case reduces to the sparse resultant as introduced in \cite{sturmfels_newton_1994}. 
\begin{thm}
\label{Theorem. Introduction. Discriminants for linearly dependent tuples}For
a dependent tuple $\mathscr{A}$ with the subtuple $\mathscr{M}$
that has minimal defect and is minimal by inclusion \textup{(Theorem
\ref{Theorem. A/M  is linearly independent})}, 

- the $\mathscr{A}$-discriminant is the sparse resultant $R_{\mathscr{M}}$
of codimension $-\delta(\mathscr{M})$\textup{ (Corollary \ref{Corollary. Discriminants for linearly dependent tuples})}; 

- the mixed discriminant is empty if the defect of the subtuple $\mathscr{M}$
is less than $-1$; otherwise, the mixed discriminant is the sparse
resultant $R_{\mathscr{M}}$, and it is a hypersurface\textup{ (Theorem
\ref{Theorem. The mixed discriminant for a linearly dependent tuple}).}
\end{thm}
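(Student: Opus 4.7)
The plan is to prove Part 1 via two inclusions $D_{\mathscr{A}}\subseteq R_{\mathscr{M}}$ and $R_{\mathscr{M}}\subseteq D_{\mathscr{A}}$ in the ambient space $\mathbb{C}_{\mathscr{A}}$, with $R_{\mathscr{M}}$ pulled back along the projection $\mathbb{C}_{\mathscr{A}}\to\mathbb{C}_{\mathscr{M}}$. The first inclusion is immediate: a common root of the full system restricts to a common root of the subsystem $(f_{i})_{i\in I}$, so the tuple lies in $R_{\mathscr{M}}$. For the reverse inclusion, I would take a generic $(f_{1},\ldots,f_{k})$ in $R_{\mathscr{M}}$; then the subsystem has a common root $x'$ in the subtorus $T'\subset T$ corresponding to the affine span of $\sum_{i\in I}A_{i}$, of dimension $d$. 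Minimality of $\mathscr{M}$ ensures that for each $j\notin I$, the support $A_{j}$ projects non-trivially onto the quotient lattice, so $f_{j}(x',\cdot)$ is a non-trivial Laurent polynomial on the complementary torus $T''$ of dimension $n-d$. The inequality $\delta(\mathscr{M})\leq\delta(\mathscr{A})\leq n-k$, i.e.\ $k-m\leq n-d$, guarantees that the complementary system $\{f_{j}(x',x'')=0\}_{j\notin I}$ generically has solutions $x''\in T''$. At the resulting common root $x=(x',x'')\in T$, the $m$ subsystem differentials $df_{i}(x)$, $i\in I$, lie in a $d$-dimensional subspace of forms; since $m>d$, they are linearly dependent, so $df_{1}\wedge\cdots\wedge df_{k}(x)=0$ and $x$ is a degenerate root. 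Hence $(f_{1},\ldots,f_{k})\in D_{\mathscr{A}}$. The codimension of $R_{\mathscr{M}}$ equals $-\delta(\mathscr{M})$ by Sturmfels' sparse resultant theory \cite{sturmfels_newton_1994}, and its irreducibility upgrades the set-theoretic inclusions to equality of varieties.

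Part 2 refines this by tracking the rank of the differentials at the root. If $\delta(\mathscr{M})\leq-2$, then $(df_{i})_{i\in I}$ consists of $m\geq d+2$ vectors in a $d$-dimensional subspace, so even after removing one vector the remaining $(m-1)$-subtuple still has more than $d$ elements and remains dependent. This produces a proper subtuple of $(df_{1},\ldots,df_{k})$ with dependent entries, so no non-degenerate multiple root exists and the mixed discriminant is empty. If $\delta(\mathscr{M})=-1$, then generically $(df_{i})_{i\in I}$ has a unique (up to scalar) non-trivial linear relation involving all $m$ vectors, and every proper subtuple of $(df_{1},\ldots,df_{k})$ is independent; combined with Part 1, this identifies the mixed discriminant with $R_{\mathscr{M}}$, a hypersurface of codimension one.

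The principal obstacle is the reverse inclusion in Part 1 --- specifically, showing that the extension of $x'$ to a full common root $x=(x',x'')$ can be performed over a Zariski-dense subset of $R_{\mathscr{M}}$ rather than over a proper subvariety. This rests on the two structural consequences of the minimality of $\mathscr{M}$: each outside support $A_{j}$ introduces a monomial in a fresh direction, and the defect inequality $k-m\leq n-d$ makes the complementary system in $T''$ under- or square-determined rather than overdetermined. Verifying these genericity statements carefully, and in particular ruling out the possibility that the constructed degenerate-root locus drops to a proper subvariety, is the crux of the argument.
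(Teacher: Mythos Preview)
Your strategy for Part~1 matches the paper's: both inclusions, the substantive one being $R_{\mathscr{M}}\subseteq D_{\mathscr{A}}$, argued by extending a root $x'$ of the subsystem to a root of the full system. But your justification for the extension step is incomplete. The inequality $k-m\leq n-d$ says only that the quotient tuple $\mathscr{A}/\mathscr{M}$ has non-negative defect; it does not prevent a \emph{sub}tuple of the quotient from having negative defect, and in that situation the complementary system on $T''$ is generically unsolvable (for instance, two generic univariate polynomials embedded in a two-dimensional torus share no root). What is actually needed is that $\mathscr{A}/\mathscr{M}$ is \emph{linearly independent} as a tuple. The paper isolates exactly this as Proposition~\ref{Proposition. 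A/M  is linearly independent} (proved combinatorially in the companion paper) and invokes it directly; your ``principal obstacle'' is correctly located, but the numerics alone will not close it.

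For Part~2, your argument in the case $\delta(\mathscr{M})\leq -2$ is correct and slightly more direct than the paper's, which phrases the dichotomy in terms of circuits rather than the defect of $\mathscr{M}$. In the case $\delta(\mathscr{M})=-1$, however, your assertion that ``every proper subtuple of $(df_1,\ldots,df_k)$ is independent'' fails as written whenever $I\subsetneq\{1,\ldots,k\}$: then $(df_i)_{i\in I}$ is itself a proper subtuple consisting of $m=d+1$ vectors in a $d$-dimensional subspace, hence dependent at every common root. The paper's Theorem~\ref{Theorem. The mixed discriminant for a linearly dependent tuple} argues via the subtuples $\mathscr{A}\setminus C$ for $C$ in the unique circuit $\mathscr{C}$; you should reconcile your approach with that one and, in particular, sort out carefully what happens when the circuit $\mathscr{M}$ is a proper subtuple of $\mathscr{A}$.
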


We provide a new characterization of the subtuple $\mathscr{M}$ for
the resultant as the maximal cycle of the induced matroid from the
realizable polymatroid on $\mathscr{A}$ (Theorem \ref{Theorem. A/M  is linearly independent}).
We also link mixed discriminants with circuits of the induced matroid
for a dependent tuple $\mathscr{A}$.

The case of independent tuples is the essence of the matter, and we
start with irreducible tuples. A tuple of finite sets is \textit{irreducible}
if the defects of all proper subtuples are positive. Independent tuples
of zero defect are called \textit{BK-tuples}. We call a tuple \textit{linear}
if it can be mapped to the tuple of standard simplexes by an automorphism
of the lattice. In 2018, Esterov conjectured \cite{esterov_galois_2019}:
\begin{thm}
\label{Theorem. Esterov conjecture}For a nonlinear irreducible BK-tuple
$\mathscr{A}$, the $\mathscr{A}$-discriminant is a hypersurface
in the space of polynomial systems $\mathbb{C}_{\mathscr{A}}$ \textup{(Theorem
\ref{Theorem. Codimension of discriminants})}.
\end{thm}

For an irreducible BK-tuple $\mathscr{A}$, the monodromy group is
the symmetric group on the mixed volume $\mathrm{MV}(\mathscr{A})$
elements \cite{esterov_galois_2019}, and the general polynomial system
is solvable by radicals if the mixed volume $\mathrm{MV}(\mathscr{A})$
does not exceed four \cite{esterov_multivariate_2016}. However, these
questions remain open for reducible BK-tuples, and the current work
facilitates their solution in future research.

For a reducible BK-tuple $\mathscr{A}$, Esterov computed the hypersurface
components of the $\mathscr{A}$-discriminant:
\begin{thm}
\textup{(Esterov)\label{Theorem. Esterov, hypersurface components}}
For a reducible BK-tuple $\mathscr{A}$, the codimension one components
of the $\mathscr{A}$-discriminant are Cayley discriminants of BK-subtuples
\textup{(Theorem 2.31, \cite{esterov_newton_2010})}.
\end{thm}

In the current work, we specify Theorem \ref{Theorem. Esterov, hypersurface components}.
The general description of discriminants for square polynomial systems
is based on Theorem \ref{Theorem. Esterov conjecture} and requires
the following notions.
\begin{defn}
A BK-tuple is \textit{simple} if it is not a union of its proper BK-subtuples.
A simple BK-subtuple is \textit{maximal} if it is not contained in
another simple BK-subtuple.

A simple BK-tuple is\textit{ prelinear} if, for every projection of
lattices, sending every set from its maximal (by inclusion) proper
BK-subtuple to zero, the image is a linear tuple.
\end{defn}

\begin{thm}
\label{Theorem. Introduction. Discriminants for linearly independent tuples}
For a BK-tuple $\mathscr{A}$, the $\mathscr{A}$-discriminant has
the following distinct components:

- codimension one Cayley discriminants of non-prelinear simple BK-subtuples;

- codimension two Cayley discriminants of prelinear simple BK-subtuples
not contained in non-prelinear BK-subtuples \textup{(Theorem \ref{Theorem. Discriminant for a BK-tuple}
and Proposition \ref{Proposition. Discriminant's strata are Cayley discriminants})}.
\end{thm}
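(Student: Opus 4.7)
My plan is to describe $D_\mathscr{A}$ as a union of contributions indexed by subtuples $\mathscr{B} \subseteq \mathscr{A}$, each measuring an obstruction to nondegeneracy, and then identify which of these yield distinct irreducible components. The first step is stratification by \emph{obstructing subtuple}: $f \in D_\mathscr{A}$ iff there is $x \in T$ with $f_i(x) = 0$ for all $i$ and with some minimal subtuple $(df_i(x))_{i \in I}$ already linearly dependent. Fixing $I$ and the corresponding $\mathscr{B} = (A_i)_{i \in I}$, the condition on the $\mathscr{B}$-part of $f$ alone cuts out, via the Cayley trick, precisely the Cayley discriminant of $\mathscr{B}$ pulled back to $\mathbb{C}_\mathscr{A}$. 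Hence $D_\mathscr{A}$ is the union of these Cayley discriminants as $\mathscr{B}$ ranges over the subtuples of $\mathscr{A}$.

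Next I would restrict the indexing to \emph{simple BK-subtuples}. A subtuple contributes nontrivially only if $\delta(\mathscr{B}) = 0$: the linear independence of $\mathscr{A}$ excludes negative defect, while positive defect forces generic emptiness of the common zero set. So $\mathscr{B}$ must be a BK-subtuple. If it is non-simple, then by definition it is a union of proper BK-subtuples $\mathscr{B}_1, \ldots, \mathscr{B}_r$; a dimension count using the combinatorial results of \cite{pokidkin_sublattice_nodate} shows that its Cayley discriminant is contained in the union of those of the $\mathscr{B}_j$, so non-simple BK-subtuples contribute no new components.

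For the codimension computation, I would apply the Cayley trick to each simple BK-subtuple $\mathscr{B}$, reducing the question to the classical $A$-discriminant of a single polynomial with support $cay(\mathscr{B})$. The classification of dual defective toric supports, combined with the combinatorial criterion of \cite{pokidkin_sublattice_nodate} identifying defectivity of $cay(\mathscr{B})$ with prelinearity of $\mathscr{B}$, yields codimension $1$ when $\mathscr{B}$ is non-prelinear and codimension $2$ when it is prelinear; the non-prelinear (hypersurface) case is essentially the irreducible case resolved in \cite{esterov_galois_2019,pokidkin_esterov_nodate}. Finally, I would verify pairwise non-containment, the only exception being a codimension-two prelinear Cayley discriminant sitting inside the codimension-one non-prelinear one whose subtuple contains it — which is precisely the exceptional clause stated in the theorem.

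The \emph{main obstacle} will be the reduction to simple BK-subtuples: ruling out "hybrid" components arising from several subtuples degenerating simultaneously in a way that does not reduce to a single simple BK-subtuple. Handling this cleanly requires a careful fiber-dimension analysis of the incidence variety $\{(f,x) : f(x) = 0,\ df_1 \wedge \cdots \wedge df_k(x) = 0\}$ stratified by the combinatorial type of $I$, and will lean heavily on the structure theorems for simple BK-tuples developed in \cite{pokidkin_sublattice_nodate}.
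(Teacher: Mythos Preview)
Your outline has the right shape---decompose, compute codimensions via the Cayley trick, then sort out containments---but the first step does not work as stated, and this is a genuine gap rather than a detail.

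You stratify by the minimal $I$ for which $(df_i(x))_{i\in I}$ is linearly dependent at a root $x$. Such an $I$ is a circuit in the vector matroid of the differentials \emph{at that particular point} $x$; there is no reason it should coincide with a simple BK-subtuple of $\mathscr{A}$, which is a purely combinatorial object attached to the supports. Different roots of the same $f$ can give different circuits, and a circuit of size $r$ in $(df_i(x))$ need not have anything to do with an $r$-element subtuple of defect zero. So the indexing set you obtain is not the one in the theorem.

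Even granting the restriction to a BK-subtuple $\mathscr{B}$, the claim that ``the condition on the $\mathscr{B}$-part of $f$ alone cuts out the Cayley discriminant of $\mathscr{B}$'' is false in general. The root $x$ must satisfy \emph{all} equations of $f$, not only those in $\mathscr{B}$; and for a simple but non-irreducible BK-subtuple $\mathscr{B}$, the Cayley discriminant $D_{cay(\mathscr{B})}$ is strictly smaller than the $\mathscr{B}$-discriminant $D_{\mathscr{B}}$ (the latter has extra components coming from proper BK-subtuples of $\mathscr{B}$). The paper handles exactly this by building a ``BK-multiplication'' $X\bullet Y$ of varieties and proving the recursive splitting $D_{\mathscr{A}}=D_{\mathscr{B}}\bullet\mathbb{C}_{\mathscr{A}/\mathscr{B}}+\mathbb{C}_{\mathscr{B}}\bullet D_{\mathscr{A}/\mathscr{B}}$ for every BK-subtuple $\mathscr{B}$. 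Iterating this along a maximal BK-filtration, together with the poset partition $\mathscr{A}=\sqcup_{\alpha\in P_{\mathscr{A}}}\mathscr{B}_\alpha$, produces strata $C(\mathscr{B}_\alpha)=\mathbb{C}_{\mathscr{B}_{(\alpha)\setminus\alpha}}\bullet D_{\hat{\mathscr{B}}_\alpha}\bullet\mathbb{C}_{\mathscr{A}/\mathscr{B}_{(\alpha)}}$ indexed by the \emph{irreducible quotients} $\hat{\mathscr{B}}_\alpha$, not by circuits of differentials. Only afterwards (Proposition~6.3) are these strata identified with Cayley discriminants of the simple BK-subtuples $\mathscr{B}_{(\alpha)}$. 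The codimension then comes from the irreducible case $D_{\hat{\mathscr{B}}_\alpha}$, and the containment analysis (your final step) is done via a Height Theorem on $P_{\mathscr{A}}$ together with an explicit argument (Lemma~5.13) for the exceptional inclusion of a lir stratum into a covering nir one. Your proposed route bypasses the quotient/filtration structure entirely, and without it I do not see how to get from ``minimal dependent subset of differentials'' to ``Cayley discriminant of a simple BK-subtuple''.
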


\begin{rem}
1) If a prelinear BK-subtuple is contained in a non-prelinear BK-subtuple,
then the Cayley discriminant of the former subtuple lies in the Cayley
discriminant of the latter.

2) The proof is based on a specific partition of the tuple $\mathscr{A}$
from Corollary \ref{Corollary. Unique decomposition of a BK-tuple}.
This partition originates from a partition of the realizable polymatroid
on $\mathscr{A}$ \cite{pokidkin_combinatorics_2025}.
\end{rem}

\begin{thm}
For a BK-tuple, the mixed discriminant is empty/a hypersurface/a variety
of codimension two if the tuple is non-simple/non-prelinear simple/prelinear
simple. For a simple BK-tuple, the mixed discriminant equals the Cayley
discriminant \textup{(Theorem \ref{Theorem. The mixed discriminant for a BK-tuple};
\cite{cattani_mixed_2013})}.
\end{thm}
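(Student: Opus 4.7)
The plan is to treat the three mutually exclusive cases (non-simple; simple non-prelinear; simple prelinear) in turn. Granted the identity mixed $=$ Cayley for simple BK-tuples, the codimension assertions follow directly from Theorem~\ref{Theorem. Introduction. Discriminants for linearly independent tuples}, so the real content is (i) emptiness of the mixed discriminant in the non-simple case, and (ii) the identity mixed $=$ Cayley in each simple case.

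For the \emph{non-simple case}, I would invoke the structural description from \cite{pokidkin_sublattice_nodate}: $\mathscr{A}$ is covered by its maximal simple BK-subtuples $\mathscr{B}_1,\ldots,\mathscr{B}_m$, each a proper subtuple, and (after translating each $A_i$ so that $0\in A_i$) their linear spans $L_j$ form a direct sum in $M_{\mathbb{Q}}$ --- a consequence of the dimension count $\sum_j |\mathscr{B}_j| = |\mathscr{A}| = n = \dim\operatorname{span}\mathscr{A}$ with $\dim L_j = |\mathscr{B}_j|$. For any common zero $x\in T$, the gradient $df_i(x)$ lies in $\operatorname{span}(A_i)\subset L_j$, so any relation $\sum_i\lambda_i df_i(x)=0$ splits as a direct sum of relations inside each $\mathscr{B}_j$; a non-trivial total relation therefore entails a non-trivial relation in some proper $\mathscr{B}_j$, violating the non-degeneracy clause in the definition of the mixed discriminant. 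Thus no non-degenerate multiple roots exist and the mixed discriminant is empty. The \emph{non-prelinear simple case} is then immediate: Theorem~\ref{Theorem. Introduction. Discriminants for linearly independent tuples} gives the Cayley discriminant as an irreducible hypersurface, and \cite{cattani_mixed_2013} asserts mixed $=$ Cayley whenever the Cayley discriminant is a hypersurface.

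The \emph{prelinear simple case} is the main new contribution. The inclusion mixed $\subseteq$ Cayley is immediate, since a non-degenerate multiple root $x$ with dependency vector $\lambda\in(\mathbb{C}^{\times})^k$ is a torus critical zero $(x,\lambda)$ of the Cayley polynomial $F=\sum_i\lambda_i f_i$. For the converse, I would use that the Cayley discriminant equals the $A$-discriminant of $cay(\mathscr{A})$, which is irreducible by the standard $A$-discriminant theory (Horn--Kapranov), so it suffices to show the mixed discriminant is non-empty and of the right dimension. The prelinear normal form is essential here: projecting out the maximal proper simple BK-subtuple $\mathscr{M}_0$ sends every remaining $A_i$ into the standard simplex, which lets one construct a genuine mixed multiple root explicitly by combining a generic Cayley-critical configuration for the subsystem indexed by $\mathscr{M}_0$ with a transversely chosen ``linear'' extension along the standard-simplex directions.

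The main obstacle is that last step: verifying that a generic Cayley-critical configuration of a prelinear simple BK-tuple avoids the sublocus where some $\lambda_i$ vanishes, i.e., where a proper subtuple of gradients is already dependent. Simplicity must enter essentially --- any persistent proper-subtuple dependency would, by the direct-sum argument from the non-simple case, force a decomposition of $\mathscr{A}$ into proper BK-subtuples, contradicting the simplicity hypothesis.
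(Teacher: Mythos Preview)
Your non-simple argument has a genuine gap: the maximal simple BK-subtuples $\mathscr{B}_{(\alpha)}$ (for $\alpha$ maximal in $P_{\mathscr{A}}$) need \emph{not} be disjoint, so the dimension count $\sum_j |\mathscr{B}_j| = |\mathscr{A}|$ fails and their spans do not form a direct sum. Concretely, take $A_1=\{0,e_1\}$, $A_2=\{0,e_1,e_2\}$, $A_3=\{0,e_1,e_3\}$ in $\mathbb{Z}^3$: the poset is $\beta<\alpha_1$, $\beta<\alpha_2$, the tuple is non-simple (two maximal elements), yet connected, and the two maximal simple subtuples $\{A_1,A_2\}$ and $\{A_1,A_3\}$ share $A_1$. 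Your direct-sum splitting applies only when $P_{\mathscr{A}}$ is \emph{disconnected}, a strictly stronger hypothesis than non-simple.

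The paper fixes this by first stripping off the common lower layers. Lemma~\ref{Lemma. Simplification of a mixed discriminant} proves the reduction $\mathring{D}_{\mathscr{A}} = \mathbb{C}_{\mathscr{B}} \bullet \mathring{D}_{\mathscr{A}/\mathscr{B}}$ for any BK-subtuple $\mathscr{B}$; taking $\mathscr{C} = \mathscr{A}\setminus\bigcup_{\alpha\in\max P_{\mathscr{A}}}\mathscr{B}_\alpha$ leaves a tuple whose poset is the antichain $\max P_{\mathscr{A}}$, and \emph{now} the direct-sum argument (Lemma~\ref{Lemma. Mixed discriminant for a connected component of the poset}) gives emptiness whenever $|\max P_{\mathscr{A}}|>1$. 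The same reduction lemma also dispatches the simple case uniformly, without splitting into prelinear and non-prelinear: when $\alpha$ is the unique maximal element, quotienting by $\mathscr{C}=\mathscr{B}_{(\alpha)\setminus\alpha}$ leaves the irreducible tuple $\hat{\mathscr{B}}_\alpha$, for which mixed $=$ Cayley $=$ $\mathscr{A}$-discriminant is already established; comparing with Proposition~\ref{Proposition. Discriminant's strata are Cayley discriminants} yields $\mathring{D}_{\mathscr{A}}=D_{cay(\mathscr{A})}=\mathbb{C}_{\mathscr{C}}\bullet D_{\hat{\mathscr{B}}_\alpha}$ directly. This bypasses entirely the explicit construction of non-degenerate multiple roots and the genericity verification for $\lambda\in(\mathbb{C}^\times)^k$ that you flag as the main obstacle; the reduction lemma is the missing idea.
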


The known results \cite{pedersen_product_1993,gelfand_discriminants_1994,dickenstein_tropical_2007,esterov_determinantal_2007,matsui_geometric_2011}
allow us to write degrees for discriminants. For a dependent tuple/BK-tuple
$\mathscr{A}$, see Propositions \ref{Proposition. I Degree of the sparse resultant for a linearly dependent tuple},
\ref{Proposition. II Degree of the sparse resultant for a linearly dependent tuple}/Corollary
\ref{Corollary. Degrees for components of the discriminant for a BK-tuple}
and Corollary \ref{Corollary. Degree of the mixed discriminant for a linearly dependent tuple}/Remark
\ref{Remark. Degree of the mixed discriminant for a BK-tuple} for
degrees of the $\mathscr{A}$-discriminant and the mixed discriminant
respectively.

Despite Cayley discriminants being well-known $A$-discriminants,
the current paper provides an alternative view of Cayley discriminants
for  dependent and BK-tuples in Theorems \ref{Theorem. The Cayley discriminant for a linearly dependent tuple}
and \ref{Theorem. Cayley Discriminants}. For a BK-tuple, Theorem
\ref{Theorem. Cayley Discriminants} shows the Cayley discriminant
equals the complete intersection of Cayley discriminants of all maximal
simple BK-subtuples. This theorem leads to a slight simplification
of the Matsui-Takeuchi degree formula in Corollary \ref{Corollary. Degree of the Cayley discriminant for a BK-tuple}.

Defining equations of discriminants can be found using computer algebra
systems and software (\href{https://symbolics.juliasymbolics.org/dev}{Julia},
\href{https://macaulay2.com}{Macaulay2}, \href{https://www.oscar-system.org}{Oscar},
\href{https://www.sagemath.org}{Sage}, ...), and then used to enumerate
the components of the discriminants and determine their geometric
characteristics. However, the complexity of this computation rapidly
grows with dimension and the size of the support sets. Our results
express the geometry of the discriminant components combinatorially
in terms of the support sets, circumventing the costly symbolic computation
of discriminants. 

The structure of the paper is as follows. Section \ref{Section. Irreducible intersections of subspaces with determinant}
characterizes irreducible intersections for row-generated subspaces
with the determinant hypersurface in the space of square matrices.
Section \ref{Section. Esterov irreducibility theorem} is dedicated
to the Esterov conjecture. Sections \ref{Section. Combinatorial Review}
and \ref{Section. Discriminants-of-polynomial} remind us some combinatorial
results and general facts about discriminants. In Section \ref{Section. BK-multiplicaiton},
we construct a special multiplication of varieties necessary to describe
the discriminants for BK-tuples. Section \ref{Section. Discriminants for BK-tuples}
characterizes $\mathscr{A}$-discriminants, Section \ref{Section. Cayley discriminants}
- Cayley discriminants, and Section \ref{Section. Mixed discriminants}
- mixed discriminants. For each type of discriminants, we enumerate
components and compute their codimensions first for BK-tuples and
then for dependent tuples. All computations of degrees are collected
in Section \ref{Section. Degrees of components}.

\section{\label{Section. Irreducible intersections of subspaces with determinant}Irreducible
intersections of subspaces with determinant}

For a quasi-affine algebraic set $X$, we denote its Zariski closure
by $\overline{X}$. An irreducible quasi-affine algebraic set over
an algebraically closed field we call a \textit{variety}. A \textit{hypersurface}
is a codimension one variety.

Consider an $n$-dimensional vector space $V$ over an algebraically
closed field $\mathbb{K}$ with the Zariski topology. Let $E$ be
an algebraic set defined by the equation $\varphi(l)=0$ in the space
$\mathrm{End}(V^{\lor})\times V^{\lor}\ni(\varphi,l),$ where $V^{\lor}$
is the dual space, and $\mathrm{End}(V^{\lor})$ is the vector space
of endomorphisms of $V^{\vee}$. By choosing coordinates, the equation
$\varphi(l)=0$ determines an intersection of $n$ quadrics in $\mathrm{End}(V^{\lor})\times V^{\lor}$.
For a nontrivial covector $l$, every point $(\varphi,l)$ of $E$
corresponds to a degenerate linear operator $\varphi$ with an eigenvector
$l$ of eigenvalue zero. 

We can explicitly write the quadric equations. Choose a basis $e_{1},...,e_{n}$
in the vector space $V$, the dual basis $e^{1},...,e^{n}$ in the
dual space $V^{\vee}$, and the basis $e^{i}\otimes e_{j}$ in the
space $\mathrm{End}(V^{\lor})$ using the isomorphism $\mathrm{End}(V^{\lor})\cong V^{\lor}\otimes V$.
Following the Einstein convention, we can represent the elements of
vector spaces in coordinates: $l=l_{i}e^{i}$ and $\varphi=\varphi_{i}^{j}e^{i}\otimes e_{j},$
$l_{i},\varphi_{i}^{j}\in\mathbb{K}$. Then the equation $\varphi(l)=0$
is equivalent to the system $\varphi_{i}^{j}l_{j}e^{i}=0.$ Since
vectors $e^{i}$ form a basis, we need the coefficients $\varphi_{i}^{j}l_{j}$
to be zero. We obtain an intersection of $n$ quadrics $\varphi_{i}^{j}l_{j}=0$
in $\mathrm{End}(V^{\lor})\times V^{\lor}$.

Notice that the equations $\varphi_{i}^{j}l_{j}=0$ are equivalent
to $l(x_{i})=0,$ $x_{i}=\varphi_{i}^{j}e_{j}\in V$. Hence there
exists an isomorphism $\mathrm{End}(V^{\lor})\cong V^{\times n}$
such that $\varphi(l)=l(x_{i})e^{i}$ for a tuple of points $(x_{1},...,x_{n})\in V\times...\times V$.
Then the equations $l(x_{i})=0$ define the necessary intersection
$E$ of $n$ quadrics in the space $V^{\times n}\times V^{\vee}.$
Remarkably, we do not need to choose bases in $V$ and $V^{\vee}$
to write the system of equations $l(x_{i})=0,$ $i\in[n]$ in $V^{\times n}\times V^{\vee}\ni(x_{1},...,x_{n},l).$

There are two natural projections: 
\[
\begin{array}[t]{cc}
\xymatrix{ & E\ar@{->>}[ld]_{p}\ar@{->>}[rd]^{q}\\
V^{\times n} &  & V^{\vee}
}
 & \begin{array}[t]{l}
p(x_{1},...,x_{n},l)=(x_{1},...,x_{n}),\\
q(x_{1},...,x_{n},l)=l.
\end{array}\end{array}
\]
For a non-zero covector $l$, the fiber $q^{-1}(l)$ is a vector subspace
in $V^{\times n}$ of dimension $n^{2}-n$. For a matrix $\varphi$,
the fiber $p^{-1}(\varphi)$ is a subspace in $V^{\lor}$ such that
its dimension equals the geometric multiplicity of zero (as an eigenvalue).

Recall that a \textit{polymatroid} is a pair $([n],rk)$ of a finite
set $[n]$ and a rank function $rk:2^{[n]}\rightarrow\mathbb{Z}_{\geq0}$,
which is submodular, monotone, and normalized ($rk(\varnothing)=0$).
The pair of a subspace tuple $(L_{1},...,L_{n})$ from the vector
space $V$ and the rank function $rk_{P}(I)=dim\,L_{I}$, $L_{I}=\underset{i\in I}{\sum}L_{i}$,
forms a polymatroid $P$. The same polymatroid $P$ is defined as
the pair of orthogonal subspaces $(L_{1}^{\perp},...,L_{n}^{\perp})$
in the dual space $V^{\vee}$ and the rank function $rk_{P}^{\perp}(I)=codim\,L_{I}^{\perp},$
$L_{I}^{\perp}=\underset{i\in I}{\cap}L_{i}^{\perp}$. 

The\textit{ defect} of a set $I\subseteq[n]$ is the number $\delta(I)=rk_{P}(I)-|I|$.

A \textit{flat} of the polymatroid $P$ is a subset $F\subseteq[n]$
that is maximal among sets of its rank. The set of all flats forms
an order lattice $\mathcal{L}$ by inclusion. The dual space $V^{\vee}$
admits a polymatroid partition into constructible sets $B_{F}$, enumerated
by flats $F$ from the lattice $\mathcal{L}$ by \cite{pokidkin_combinatorics_2025}.
The sets $B_{F}$ are dense in the subspaces $L_{F}^{\perp}$. Denote
by $L$ the product $L_{1}\times...\times L_{n}\subseteq V^{\times n}$
and by $E_{l}|_{L}$ the restriction of the fiber $q^{-1}(l)|_{L}=p(q^{-1}(l))\cap L$
to the subspace $L$ for a covector $l$.
\begin{lem}
\label{Lemma. Dimension of a fiber}For every $l\in B_{F},$ the restriction's
dimension equals $dim\,E_{l}|_{L}=dim\,L-n+|F|.$
\end{lem}

\begin{proof}
Every constructible set is given by $B_{F}=L_{F}^{\perp}\backslash\underset{F'\in\mathcal{L}\backslash(F)}{\cup}L_{F'}^{\perp}$
for a flat $F$. We can rewrite the definition as follows: $B_{F}=\{l\in V^{\vee}\,|\;l\in L_{F}^{\perp},$
and $l\notin L_{j}^{\perp},\;j\in[n]\backslash F\}=\{l\in V^{\vee}\,|\;l^{\perp}\supseteq L_{F},$
and $l^{\perp}\cancel{\supseteq}L_{j},\;j\in[n]\backslash F\}$. This
means that the set $B_{F}$ corresponds to the set of points $l,$
for which the orthogonal complement $l^{\perp}$ contains each subspace
$L_{i}$ for all indices $i$ from the flat $F$, and the complement
$l^{\perp}$ does not contain subspaces $L_{j}$ for indices $j$
from the complement $[n]\backslash F$. Therefore, the dimension for
the restriction of the fiber $E_{l}|_{L}=l^{\perp}\times...\times l^{\perp}\cap L=(l^{\perp}\cap L_{1})\times...\times(l^{\perp}\cap L_{n})$
to the subspace $L$ equals 
\begin{align*}
dim\,E_{l}|_{L} & =\stackrel[i=1]{n}{\sum}dim\,l^{\perp}\cap L_{i}=\stackrel[i=1]{n}{\sum}(dim\,l^{\perp}+dim\,L_{i}-dim\,(l^{\perp}+L_{i}))=\\
 & =(n^{2}-n)+dim\,L-(n^{2}-n+n-|F|)=dim\,L-n+|F|.
\end{align*}
\end{proof}
Denote by $Q_{F}$ the intersection $q^{-1}(B_{F})\cap L\times V^{\vee}$.
Use the polymatroid partition $V^{\vee}=\underset{F\in\mathcal{L}}{\sqcup}B_{F}$
and Lemma \ref{Lemma. Dimension of a fiber} to write the partition
for the intersection:
\[
E\cap(L\times V^{\vee})=\underset{F\in\mathcal{L}}{\sqcup}Q_{F}.
\]

\begin{prop}
\label{Proposition. Vector bundles other B_F}For a proper flat $F$,
the triple $(Q_{F},q,B_{F})$ is a vector bundle of rank $r=dim\,L-n+|F|$,
where $Q_{F}$ is a variety of dimension $dim\,L-\delta(F)$.
\end{prop}

\begin{proof}
By Lemma \ref{Lemma. Dimension of a fiber}, every fiber is a vector
space of dimension $r$. Therefore, the dimension of the preimage
$Q_{F}$ equals 
\[
dim\,Q_{F}=dim\,B_{F}+dim\,fiber=(n-dim\,L_{F})+(dim\,L-n+|F|)=dim\,L-\delta(F).
\]

Let us show that it is possible to cover the base $B_{F}$ by open
charts $Y_{F}$ such that the preimages $q^{-1}(Y_{F})$ are isomorphic
to trivial vector bundles $Y_{F}\times\mathbb{K}^{r}$. Choose arbitrary
vectors $v_{i}\in L_{i}\backslash L_{F}$ for each element $i\in[n]\backslash F$
and define the set $Y_{F}=B_{F}\backslash\underset{i\in[n]\backslash F}{\cup}v_{i}^{\perp}$,
which is open in the vector subspace $L_{F}^{\perp}$. Choose an arbitrary
decomposition of each subspace $L_{i}$ into the direct sum $L_{i}=U_{i}\oplus\langle v_{i}\rangle$
for each element $i\in[n]\backslash F$. Notice that the preimage
$q^{-1}(Y_{F})$ is an open subset of $Q_{F}$ and equals the intersection
$E\cap L\times Y_{F}$. By the construction of $B_{F},$ the preimage
$Q_{F}$ is an intersection of the set $L\times B_{F}$ and $n-|F|$
quadratic equations of the form: $l(x_{i})=0,$ $x_{i}\in L_{i}$,
$i\in[n]\backslash F$, and $l\in B_{F}$. If we restrict the equations
to the set $L\times Y_{F}$ --- recall the decomposition $L_{i}=U_{i}\oplus\langle v_{i}\rangle$,
$x_{i}=u_{i}+\lambda_{i}v_{i}$, $u_{i}\in U_{i},$ $\lambda_{i}\in\mathbb{K}$
--- we obtain
\[
l(u_{i})+\lambda_{i}l(v_{i})=0,\qquad l\in Y_{F}.
\]
Remarkably, the value of $l(\upsilon_{i})$ is never zero for $l\in Y_{F}$
for each $i$. This means we can express the variables $\lambda_{i}$
in terms of these equations. Then the coordinate algebra of the preimage
$q^{-1}(Y_{F})$ is 
\[
\mathbb{K}[q^{-1}(Y_{F})]\cong\mathbb{K}[\underset{i\in F}{\oplus}L_{i}\oplus\underset{j\in[n]\backslash F}{\oplus}U_{j}\times Y_{F}],
\]
isomorphic to the free algebra over the ring $\mathbb{K}[Y_{F}]$.
Therefore, the preimage $q^{-1}(Y_{F})$ is a variety and a trivial
vector bundle over the open chart $Y_{F}$ with the expected fiber.

Notice that the choice of vectors $v_{i}$ and subspaces $U_{i}$
was arbitrary. Let us show that the base $B_{F}$ can be covered by
affine charts of the form $Y_{F}=Y_{F}(v)$, $v=(v_{i})_{i\in[n]\backslash F}\in C_{F}=\underset{i\in[n]\backslash F}{\prod}L_{i}\backslash L_{F}$,
\begin{align*}
\underset{\upsilon\in C_{F}}{\cup}Y_{F}(v) & =\underset{v\in C_{F}}{\cup}(B_{F}\backslash\underset{i\in[n]\backslash F}{\cup}v_{i}^{\perp})=\underset{\upsilon\in C_{F}}{\cup}\underset{i\in[n]\backslash F}{\cap}B_{F}\backslash v_{i}^{\perp}=\underset{i\in[n]\backslash F}{\cap}(\underset{v_{i}\in L_{i}\backslash L_{F}}{\cup}B_{F}\backslash v_{i}^{\perp})\overset{(*)}{=}B_{F},\\
(*) & \underset{v_{i}\in L_{i}\backslash L_{F}}{\cup}B_{F}\backslash v_{i}^{\perp}=B_{F}\backslash\underset{v_{i}\in L_{i}\backslash L_{F}}{\cap}v_{i}^{\perp}=B_{F}\backslash L_{i}^{\perp}=B_{F},
\end{align*}
\end{proof}
\begin{prop}
The intersection of the determinant hypersurface $Det$ with the subspace
$L$ in the space of square matrices equals the union:
\[
Det\cap L=\underset{F\in\mathcal{L}\backslash[n]}{\cup}\overline{p(Q_{F})}.
\]
\end{prop}

\begin{proof}
The single polynomial equation $det\,\varphi|_{L}$ defines the intersection
$Det\cap L$. Hence the intersection is closed, and all components
have codimension one in the subspace $L$.

By the polymatroid partition of the dual vector space \cite{pokidkin_combinatorics_2025}
and Propositions \ref{Proposition. Vector bundles other B_F}, the
intersection $E\cap L\times V^{\lor}=\underset{F\in\mathcal{L}}{\cup}Q_{F}$
can be written as the union of strata $Q_{F}$, enumerated by flats
$F$ of the polymatroid on $(L_{1},...,L_{n})$ with the lattice of
flats $\mathcal{L}$. By definition, the zero vector cannot be an
eigenvector. This means every point $(\varphi,l)$ from $E\backslash q^{-1}(0)$
is a pair consisting of a degenerate matrix $\varphi$ with a non-zero
eigenvector $l$, corresponding to eigenvalue zero. Then the determinant
hypersurface $Det$ equals the closure of the projection $p(E\backslash q^{-1}(0))$.
Notice that $B_{[n]}=L_{[n]}^{\perp}=(L_{[n]})^{\perp}=V^{\perp}=\{0\}$,
$Q_{[n]}=q^{-1}(B_{[n]})\cap L\times V^{\lor}=L\times\{0\}$, and
the intersection $Det\cap L$ is equal to the union
\[
Det\cap L=\overline{p((E\backslash q^{-1}(0))\cap L\times V^{\lor})}=\overline{p((E\cap L\times V^{\lor})\backslash Q_{[n]})}=\underset{F\in\mathcal{L}\backslash[n]}{\cup}\overline{p(Q_{F})},
\]
where $\mathcal{L}\backslash[n]$ is the lattice of flats without
the maximal element $[n]$. 
\end{proof}
Recall that a tuple of subspaces $(L_{1},...,L_{n})$ is independent
if the defects of all subtuples are non-negative. According to the
Minkowski theorem, this is equivalent to the existence of $n$ linearly
independent vectors $\upsilon_{i}\in L_{i}$.
\begin{cor}
For a dependent tuple $(L_{1},...,L_{n})$ of vector subspaces, the
subspace $L$ lies in the determinant hypersurface $Det$.
\end{cor}

\begin{thm}
\label{Theorem. Intersection of the determinant with a subspace}For
an irreducible BK-tuple $(L_{1},...,L_{n})$ of vector subspaces,
the intersection of the determinant hypersurface $Det$ with the subspace
$L$ is a variety.
\end{thm}

\begin{proof}
The equation $det\,\varphi|_{L}$ is not trivial, because irreducible
tuples of subspaces are independent. Then every matrix from the intersection
$Det\cap L$ has at least one eigenvector corresponding to eigenvalue
zero, and the dimension of the projection is bounded by $dim\,p(Q_{F})\leq dim\,Q_{F}-1=dim\,L-\delta(F)-1.$
Since the tuple $(L_{1},...,L_{n})$ is irreducible, the defects of
proper flats are positive, and $dim\,p(Q_{F})<dim\,L-1$. Only for
the empty BK-subtuple $(\varnothing)$, the inequality $dim\,p(Q_{\varnothing})\leq dim\,L-1$
is not strict, and there are no other candidates between strata to
have codimension one in $L$. Since the intersection $Det\cap L$
is nonempty and has codimension one, the equality holds $codim_{L}\,p(Q_{\varnothing})=1$.
Since the stratum $Q_{\varnothing}$ is a variety, the projection
$p(Q_{\varnothing})$ is a variety. The closure of $p(Q_{\varnothing})$
contains projections of all other strata $Q_{F}$ and equals the intersection
$Det\cap L$.
\end{proof}
\begin{rem}
For a BK-tuple $(L_{1},...,L_{n})$ of vector subspaces, it is possible
to show that the number of components for the intersection $Det\cap L$
equals the number of elements in the Birkhoff poset for BK-subtuples
\cite{pokidkin_combinatorics_2025}. This completes the description
of the intersection of the determinant hypersurface with a row-generated
subspace.
\end{rem}

\section{\label{Section. Esterov irreducibility theorem}Esterov irreducibility
theorem}

Denote by $\mathbb{K}_{\mathscr{A}}$ the space of polynomial systems
over an algebraically closed field $\mathbb{K}$. We search for solutions
of polynomial systems $\Phi_{\mathscr{A}}$ in the split torus $T(\mathscr{A})=\langle\mathscr{A}\rangle^{\vee}\otimes\mathbb{K}^{\times}$,
$\langle\mathscr{A}\rangle=M$. In the affine space $\mathbb{K}_{\mathscr{A}}$,
the $\mathscr{A}$\textit{-discriminant} is the Zariski closure of
the set of polynomial systems with a degenerate root in the split
torus $T(\mathscr{A})$. Denote by $\mathfrak{c}(\mathscr{A})$ the
number of sets in the tuple $\mathscr{A}$.
\begin{thm}
\label{Theorem. Esterov's Conjecture}For an irreducible BK-tuple
$\mathscr{A}$, the discriminant $D_{\mathscr{A}}$ is irreducible
in $\mathbb{K}_{\mathscr{A}}$.
\end{thm}

\begin{proof}
A tuple $\mathscr{A}$ defines a vector subspace of matrices $Mat_{\mathscr{A}}=\underset{A\in\mathscr{A}}{\oplus}(\langle A\rangle\otimes\mathbb{K})$
in the space of matrices $Mat_{dim\langle\mathscr{A}\rangle,\mathfrak{c}(\mathscr{A})}$.
For the irreducible BK-tuple $\mathscr{A}$, the intersection $Y$
of the determinant hypersurface $Det$ with the subspace $Mat_{\mathscr{A}}$
is a variety by Theorem \ref{Theorem. Intersection of the determinant with a subspace}.

With a finite set $A$ and a tuple $\mathscr{A}$, we associate matrices
$\tau_{A}=(a,\;a\in A)$ and $\tau_{\mathscr{A}}=\underset{A\in\mathscr{A}}{\oplus}\tau_{A}$,
with integer coefficients and ranks $rk\,\tau_{A}=dim\,\langle A\rangle,$
$rk\,\tau_{\mathscr{A}}=dim\,Mat_{\mathscr{A}}$. The matrix $\tau_{\mathscr{A}}$
defines the linear map from $\mathbb{K}_{\mathscr{A}}$ to $Mat_{\mathscr{A}}$
by the formula $\tau_{\mathscr{A}}(\lambda_{\mathscr{A}})=(\tau_{A}(\lambda_{A}),\;A\in\mathscr{A})$,
where $\tau_{A}(\lambda_{A})=\underset{a\in A}{\sum}\lambda_{a}\,a$.
This means that the preimage $K=\tau_{\mathscr{A}}^{-1}(Y)$ is a
variety in $\mathbb{K}_{\mathscr{A}}$ as a trivial vector bundle
of rank $dim\,\mathbb{K}_{\mathscr{A}}-dim\,Mat_{\mathscr{A}}$.

We can construct the following sequence of maps:
\[
\begin{array}{ccccccc}
T(\mathscr{A})\times\mathbb{K}_{\mathscr{A}} & \overset{\sigma_{\mathscr{A}}}{\twoheadrightarrow} & \mathbb{K}_{\mathscr{A}} & \overset{\tau_{\mathscr{A}}}{\rightarrow} & Mat_{\mathscr{A}} & \overset{det}{\rightarrow} & \mathbb{K},\end{array}
\]
where $\sigma_{\mathscr{A}}(x,c)=(c_{a}x^{a})_{a\in A\in\mathscr{A}}$
is the torus action. Notice that the preimage $W=\sigma_{\mathscr{A}}^{-1}(K)$
is a variety that is isomorphic to $T(\mathscr{A})\times K$. Indeed,
the isomorphism is defined by the regular maps $W\stackrel[\iota]{\sigma}{\rightleftarrows}T(\mathscr{A})\times K$
such that $\iota(x,c_{a})=(x,\frac{c_{a}}{x^{a}})$ and $\sigma(x,c_{a})=(x,c_{a}x^{a})$
for every element $a\in A\in\mathscr{A}.$

The sequence of maps defines the equation for a root $x$ of a polynomial
system $c=(c_{a})_{a\in A\in\mathscr{A}}$ to be a singular point:
$det\circ\tau_{\mathscr{A}}\circ\sigma_{\mathscr{A}}(c,x)=det\,||\underset{a\in A}{\sum}c_{a}x^{a}a||_{A\in\mathscr{A}}=0.$
This polynomial is irreducible because $W$ is a variety.

Consider the vector subspaces $\Pi_{A}$ in $\mathbb{K}_{\mathscr{A}}$
defined by the linear equations $\underset{a\in A}{\sum}c_{a}=0$,
and their intersection $\Pi=\underset{A\in\mathscr{A}}{\cap}\Pi_{A}$.
Denote by $c_{A}$ the coefficient corresponding to zero in a finite
set $A$. In each linear equation, we can express the variable $c_{A}=-\underset{a\in A\backslash\{0\}}{\sum}c_{a}.$
Notice that the determinant $det\,||\underset{a\in A}{\sum}c_{a}a||_{A\in\mathscr{A}}=0$
does not depend on the coefficients $c_{A},$ $A\in\mathscr{A}$.
Then the intersection of the variety $K$ with the subspace $\Pi$
is a variety $X=K\cap\Pi$ defined by the same determinant. To see
this, we ensure that the coordinate algebra $\mathbb{K}[X]$ is an
integral domain:
\[
\mathbb{K}[X]=\frac{\mathbb{K}[c_{a},\,a\in A\in\mathscr{A}]}{\langle\underset{a\in A}{\sum}c_{a},\;A\in\mathscr{A};\;det\,||\underset{a\in A}{\sum}c_{a}a||_{A\in\mathscr{A}}\rangle}=\frac{\mathbb{K}[c_{a},\,a\in A\backslash\{0\},\,A\in\mathscr{A}]}{\langle det\,||\underset{a\in A}{\sum}c_{a}a||_{A\in\mathscr{A}}\rangle}.
\]

Thus the preimage $Z=\sigma_{\mathscr{A}}^{-1}(X)$ is a variety,
isomorphic to the direct product $T(\mathscr{A})\times X$ by the
same isomorphisms $\iota$ and $\sigma$. Notice that each preimage
$\sigma_{\mathscr{A}}^{-1}(\Pi_{A})$ equals the zero locus of the
equation $f_{A}(x)=0$ in $T(\mathscr{A})\times\mathbb{K}_{\mathscr{A}}$.
Hence the variety $Z$ projects to the discriminant $D_{\mathscr{A}}$
by the map $T(\mathscr{A})\times\mathbb{K}_{\mathscr{A}}\overset{\pi}{\rightarrow}\mathbb{K}_{\mathscr{A}}$,
$\pi(x,c)=(c)$:
\begin{align*}
Z=\sigma_{\mathscr{A}}^{-1}(X) & =\sigma_{\mathscr{A}}^{-1}(K\cap\Pi)=\sigma_{\mathscr{A}}^{-1}(K)\cap\sigma_{\mathscr{A}}^{-1}(\Pi)=W\underset{A\in\mathscr{A}}{\cap}\sigma_{\mathscr{A}}^{-1}(\Pi_{A})=\\
 & =V(det\,||\underset{a\in A}{\sum}c_{a}x^{a}a||_{A\in\mathscr{A}})\cap\underset{A\in\mathscr{A}}{\cap}V(f_{A}(x)).
\end{align*}
Therefore, the discriminant $D_{\mathscr{A}}$ is a variety as a projection
of a variety.
\end{proof}
\begin{defn}
A linear/nonlinear irreducible BK-tuple is called a \textit{lir}/\textit{nir}.
\end{defn}

\begin{rem}
Linear BK-tuples are the only tuples with unit mixed volume \cite{cattani_mixed_2013,esterov_systems_2015}.
Discriminants for linear tuples were considered in the works \cite{esterov_galois_2019,borger_defectivity_2020}.
\end{rem}

\begin{thm}
\label{Theorem. Codimension of discriminants}For a lir/nir $\mathscr{A}$,
the discriminant $D_{\mathscr{A}}$ is a variety of codimension two/one
in the space of polynomial systems\textup{ $\mathbb{C}_{\mathscr{A}}$.}
\end{thm}

\begin{proof}
If the tuple $\mathscr{A}$ is a nir, then the discriminant $D_{\mathscr{A}}$
has a codimension one component in $\mathbb{C}_{\mathscr{A}}$ according
to \cite{esterov_galois_2019}. By Theorem \ref{Theorem. Esterov's Conjecture},
this component of codimension one is unique, and the discriminant
$D_{\mathscr{A}}$ is a variety.

If the tuple $\mathscr{A}$ is a lir, then the space $\mathbb{C}_{\mathscr{A}}$
corresponds to a space of coefficients for systems of linear equations.
In that case, the discriminant is the intersection of two determinant
hypersurfaces. By Theorem \ref{Theorem. Esterov's Conjecture}, this
intersection is irreducible. We can use the polynomial equations for
the determinants to parameterize the intersection and conclude that
the discriminant has codimension two.
\end{proof}
\begin{cor}
For a lir/nir $\mathscr{A}$, the singular locus of a generic polynomial
system from the discriminant $D_{\mathscr{A}}$ has dimension one/zero.
\end{cor}

\begin{proof}
Consider the dominant map $Z\overset{\pi}{\rightarrow}D_{\mathscr{A}}$
from the proof of Theorem \ref{Theorem. Esterov's Conjecture}. For
a system $\Phi\in D_{\mathscr{A}},$ the fiber $\pi^{-1}(\Phi)=\Phi\times Sing(\Phi)$
is isomorphic to the singular locus of the system $\Phi.$ Using Theorem
\ref{Theorem. Codimension of discriminants}, the dimension of the
generic fiber $\pi^{-1}(\Phi)$ equals 
\begin{align*}
dim\,\pi^{-1}(\Phi) & =dim\,Z-dim\,D_{\mathscr{A}}=\begin{cases}
1, & \text{if the tuple }\mathscr{A}\text{ is a lir,}\\
0, & \text{if the tuple }\mathscr{A}\text{ is a nir.}
\end{cases}
\end{align*}
\end{proof}
\begin{cor}
\label{Corollary. Irreducible BK =00003D> three types of discriminant are equal}For
an irreducible BK-tuple $\mathscr{A}$, the $\mathscr{A}$-discriminant,
the Cayley discriminant, and the mixed discriminant form the same
hypersurface in $\mathbb{C}_{\mathscr{A}}$ \textup{(see \cite{cattani_mixed_2013,esterov_galois_2019})}.
\end{cor}

\section{\label{Section. Combinatorial Review}Combinatorics behind discriminants}

This section provides an overview of combinatorial results concerning
sublattice configurations, tuples of finite sets, and the mixed volume.
For a sublattice tuple $\mathsf{n}=(S_{1},...,S_{k})$ from a lattice
$M$, the notion of irreducibility is motivated by questions from
algebraic geometry. For the simple case of a tuple with $n$ sublattices
in $\mathbb{Z}^{n},$ there is a partition of a reducible tuple into
subtuples corresponding to irreducible ones. The partition of the
tuple $\mathsf{n}$ is defined by the realizable polymatroid on $\mathsf{n}$.
This combinatorial decomposition is a shadow of the decomposition
of an $\mathscr{A}$-discriminant for polynomial systems into irreducible
components.
\begin{defn}
The \textit{saturation} of a sublattice $S$ is the maximal sublattice
$\overline{S}$ containing $S$ with the same dimension as $S$.

The \textit{linear span} $\langle\mathsf{n}\rangle$ of a sublattice
tuple $\mathsf{n}$ is the minimal sublattice containing all sublattices
from the tuple. The \textit{cardinality} $\mathfrak{c}(\mathsf{n})$
is the number of sets in the tuple $\mathsf{n}$, and the \textit{defect}
of $\mathsf{n}$ is the difference $\delta(\mathsf{n})=dim\,\langle\mathsf{n}\rangle-\mathfrak{c}(\mathsf{n}).$

A tuple is \textit{essential} if every proper subtuple has a strictly
greater defect than the whole tuple. A tuple is\textit{ independent}
if the defects of all subtuples are non-negative. An independent tuple
is \textit{irreducible} if the defects of all proper subtuples are
positive. A \textit{BK-tuple} is an independent tuple with zero defect.

For tuples $\mathsf{n}$ and $\mathscr{\mathsf{k}}$ in a lattice
$M,$ the \textit{quotient tuple} is the projection of the complement
subtuple $\mathsf{n}/\mathsf{k}=\pi(\mathsf{n}\backslash\mathsf{k}),$
where $\pi:M\rightarrow M/\overline{\langle\mathsf{k}\rangle}$.
\end{defn}

The notion of a BK-tuple arose from the Kouchnirenko-Bernstein theorem
\cite{bernshtein_number_1975}.
\begin{prop}
\textup{\cite{pokidkin_combinatorics_2025}} For a reducible BK-tuple
$\mathsf{n}$ and a BK-subtuple $\mathsf{k}$,

1) linear spans of irreducible BK-subtuples do not intersect one another
except for the origin;

2) the quotient tuple $\mathsf{n}/\mathsf{k}$ is a BK-tuple;

3) BK-subtuples of the quotient $\mathsf{n}/\mathsf{k}$ are in bijection
with BK-subtuples of $\mathsf{n}$ containing $\mathsf{k}$;

4) quotient tuples satisfy the isomorphism $\mathsf{n}/\mathsf{k}\cong\frac{\mathsf{n}/\mathsf{l}}{\mathsf{k}/\mathsf{l}}$,
where $\mathsf{l}$ is a BK-subtuple of $\mathsf{k}$.
\end{prop}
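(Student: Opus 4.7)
The plan is to reduce every claim to elementary rank computations in the rational vector space $M\otimes\mathbb{Q}$, using only the submodular identity $\dim(V_{1}+V_{2})=\dim V_{1}+\dim V_{2}-\dim(V_{1}\cap V_{2})$ combined with the BK- and irreducibility inequalities. Since defect and the BK-property depend only on ranks, passing to $\mathbb{Q}$ is harmless; integrality and saturation really intervene only in part~4.

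For part~1 I would fix two distinct irreducible BK-subtuples $\mathsf{p}_{1}\ne\mathsf{p}_{2}$ of $\mathsf{n}$. Since any BK-subtuple of an irreducible BK-tuple must be the whole tuple, neither is contained in the other, so $\mathsf{p}_{1}\cap\mathsf{p}_{2}$ is a proper subtuple of each. Applying additivity to $\langle\mathsf{p}_{1}\rangle+\langle\mathsf{p}_{2}\rangle$ and using $\delta(\mathsf{p}_{1})=\delta(\mathsf{p}_{2})=0$ together with $\delta(\mathsf{p}_{1}\cup\mathsf{p}_{2})\ge 0$ forces
\[
\dim\bigl(\langle\mathsf{p}_{1}\rangle\cap\langle\mathsf{p}_{2}\rangle\bigr)\le|\mathsf{p}_{1}\cap\mathsf{p}_{2}|,
\]
and in particular $\delta(\mathsf{p}_{1}\cap\mathsf{p}_{2})\le 0$. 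By irreducibility of $\mathsf{p}_{1}$ this is only possible if $\mathsf{p}_{1}\cap\mathsf{p}_{2}$ is empty, and then the displayed inequality upgrades to $\langle\mathsf{p}_{1}\rangle\cap\langle\mathsf{p}_{2}\rangle=0$ as required.

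For part~2 the computation $\dim\pi\bigl(\langle\mathsf{n}\setminus\mathsf{k}\rangle\bigr)=|\mathsf{n}|-|\mathsf{k}|$ is immediate from the same additivity identity applied to $\langle\mathsf{n}\setminus\mathsf{k}\rangle+\langle\mathsf{k}\rangle=\langle\mathsf{n}\rangle$, giving $\delta(\mathsf{n}/\mathsf{k})=0$. Linear independence of $\mathsf{n}/\mathsf{k}$ is obtained by taking any subtuple $\mathsf{q}$ with preimage $\mathsf{r}\subset\mathsf{n}\setminus\mathsf{k}$ and applying the same identity to $\mathsf{r}\cup\mathsf{k}$: the inequality $\delta(\mathsf{r}\cup\mathsf{k})\ge 0$ rearranges into $\dim\langle\mathsf{q}\rangle\ge|\mathsf{q}|$. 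Part~3 then comes essentially for free: the maps $\mathsf{p}\mapsto\mathsf{p}/\mathsf{k}$ and $\mathsf{q}\mapsto\mathsf{r}(\mathsf{q})\cup\mathsf{k}$ (where $\mathsf{r}(\mathsf{q})$ is the subtuple of $\mathsf{n}\setminus\mathsf{k}$ whose projection is $\mathsf{q}$) are mutually inverse bijections between the two families of subtuples, and running the above dimension identity in either direction confirms that both preserve the BK-property.

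For part~4, which I expect to be the main obstacle, the identification $\mathsf{n}/\mathsf{k}\cong(\mathsf{n}/\mathsf{l})/(\mathsf{k}/\mathsf{l})$ reduces to verifying
\[
\pi_{1}^{-1}\bigl(\overline{\langle\mathsf{k}/\mathsf{l}\rangle}\bigr)=\overline{\langle\mathsf{k}\rangle}, \qquad \pi_{1}\colon M\to M/\overline{\langle\mathsf{l}\rangle},
\]
so that the third isomorphism theorem identifies $\pi_{2}\circ\pi_{1}$ with $\pi$ and makes the two tuples agree on the nose (both being projections of the same list of sublattices indexed by $\mathsf{n}\setminus\mathsf{k}$). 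This is the one place where saturation genuinely matters: one needs to argue that the saturation of $\langle\mathsf{k}\setminus\mathsf{l}\rangle+\overline{\langle\mathsf{l}\rangle}$ coincides with $\overline{\langle\mathsf{k}\rangle}$, using that $\overline{\langle\mathsf{l}\rangle}\subseteq\overline{\langle\mathsf{k}\rangle}$ and that $\overline{A+B}\subseteq\overline{C}$ whenever $A+B\subseteq\overline{C}$.
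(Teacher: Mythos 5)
The paper itself contains no proof of this proposition: it is one of the combinatorial statements explicitly outsourced to the companion paper \cite{pokidkin_sublattice_nodate}, so there is nothing in this text to compare your argument against. Judged on its own, your plan is correct and essentially complete in outline. Part 1 works: from $\delta(\mathsf{p}_{1})=\delta(\mathsf{p}_{2})=0$ and $\delta(\mathsf{p}_{1}\cup\mathsf{p}_{2})\ge 0$ the modular identity gives $\dim(\langle\mathsf{p}_{1}\rangle\cap\langle\mathsf{p}_{2}\rangle)\le|\mathsf{p}_{1}\cap\mathsf{p}_{2}|$; you should state explicitly that $\langle\mathsf{p}_{1}\cap\mathsf{p}_{2}\rangle\subseteq\langle\mathsf{p}_{1}\rangle\cap\langle\mathsf{p}_{2}\rangle$ to get $\delta(\mathsf{p}_{1}\cap\mathsf{p}_{2})\le 0$, and then irreducibility forces the intersection to be empty and the displayed bound collapses the lattice intersection to $0$. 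Parts 2 and 3 are routine once one notes $\dim\pi(\langle\mathsf{r}\rangle)=\dim\langle\mathsf{r}\cup\mathsf{k}\rangle-|\mathsf{k}|$ (using that saturation preserves rank); this single identity also shows no set of $\mathsf{n}\setminus\mathsf{k}$ projects to zero, which is a degenerate case worth flagging. For part 4 your identification is the right one; the cleanest way to finish it is to observe that $\pi_{1}^{-1}\bigl(\overline{\langle\mathsf{k}/\mathsf{l}\rangle}\bigr)$ is saturated (the kernel $\overline{\langle\mathsf{l}\rangle}$ is saturated, so preimages of saturated sublattices are saturated), contains $\langle\mathsf{k}\rangle$, and has rank $(|\mathsf{k}|-|\mathsf{l}|)+|\mathsf{l}|=|\mathsf{k}|$, hence equals $\overline{\langle\mathsf{k}\rangle}$; the third isomorphism theorem then identifies $\pi$ with $\pi_{2}\circ\pi_{1}$ and the two quotient tuples coincide entry by entry. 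With those half-lines added, the argument is a complete and elementary proof of all four claims.
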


\begin{defn}
An \textit{order ideal} $\mathtt{I}$ in a poset $\mathtt{P}$ is
a subposet such that if $\beta\in\mathtt{I}$ and $\alpha\leq\beta$,
then $\alpha\in\mathtt{I}$. For an element $\alpha$ in a poset,
the \textit{principal order ideal }$(\alpha)$ is the order ideal
of all elements that are not greater than $\alpha.$

The dual notions are \textit{order filter} and \textit{principal order
filter} $[\alpha]$ (inverse all $\leq$).
\end{defn}

Since intersections and unions of BK-subtuples are BK-subtuples in
a BK-tuple $\mathsf{n}$ \cite{pokidkin_combinatorics_2025}, the
set of BK-subtuples forms a distributive lattice $L$ by inclusion.
By the fundamental theorem for distributive lattices \cite{stanley_enumerative_2011},
there exists a poset $\mathtt{P}$ such that its lattice of order
ideals is isomorphic to $L$.
\begin{defn}
A \textit{filtration} of a tuple $\mathsf{n}$ is an increasing family
of subtuples $F_{0}\mathsf{n}\hookrightarrow F_{1}\mathsf{n}\hookrightarrow...\hookrightarrow F_{m}\mathsf{n}=\mathsf{n}.$
A filtration is a \textit{BK-filtration} if all quotients $F_{j}\mathsf{n}/F_{j-1}\mathsf{n}$
are BK-tuples. A BK-filtration is \textit{maximal} if all quotients
$F_{j}\mathsf{n}/F_{j-1}\mathsf{n}$ are irreducible.
\end{defn}

\begin{prop}
\label{Corollary. Filtration of BK-tuple with MV>0}For a reducible
BK-tuple $\mathsf{n}$, the number of subtuples in a maximal BK-filtration
equals the number of elements in the Birkhoff poset. Moreover, there
exist linear isomorphisms between successive quotients $F_{j}\mathsf{n}/F_{j-1}\mathsf{n}$
and $F_{j}'\mathsf{n}/F_{j-1}'\mathsf{n}$, for any given maximal
BK-filtrations $F_{\bullet}\mathsf{n}$ and $F'_{\bullet}\mathsf{n}.$
\end{prop}

This Birkhoff poset $\mathtt{P}$ defines a partition of the BK-tuple
$\mathsf{n}$: every element $\alpha$ corresponds to some subtuple
$\mathsf{k}_{\alpha}$ of $\mathsf{n}$, and every order ideal $\mathtt{I}$
of $\mathtt{P}$ corresponds to some BK-subtuple $\mathtt{k_{I}}=\underset{\alpha\in\mathtt{I}}{\sqcup}\mathsf{k}_{\alpha}.$ 
\begin{thm}
\textup{\label{Theorem. Poset partition of a reducible BK-tuple }
\cite{pokidkin_combinatorics_2025}} A reducible BK-tuple $\mathsf{n}$
admits the unique partition $\mathsf{n}=\underset{\alpha\in\mathtt{P}}{\sqcup}\mathsf{k}_{\alpha}$
such that the subtuples $\hat{\mathsf{k}}_{\alpha}=\mathsf{k}_{(\alpha)}/\mathsf{k}_{(\alpha)\backslash\alpha}$
are irreducible BK-tuples for every element $\alpha$ of the poset
$\mathtt{P}$.
\end{thm}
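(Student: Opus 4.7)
The plan is to obtain the partition directly from Birkhoff's fundamental theorem for finite distributive lattices applied to $L_{\mathsf{n}}$, then to verify the two substantive claims of the theorem: irreducibility of each quotient $\hat{\mathsf{k}}_\alpha$ and uniqueness of the partition. The paragraph preceding the theorem already supplies the construction: elements of $P_{\mathsf{n}}$ correspond to join-irreducibles of $L_{\mathsf{n}}$, and for each such $\alpha$ one sets $\mathsf{k}_\alpha := \mathsf{k}_{(\alpha)} \backslash \mathsf{k}_{(\alpha)\backslash\alpha}$ (as a subtuple, not generally as a BK-subtuple). The identity $\mathsf{n} = \sqcup_\alpha \mathsf{k}_\alpha$ is then immediate from the lattice isomorphism $L_{\mathsf{n}} \cong J(P_{\mathsf{n}})$ applied at $\mathsf{n}$ itself.

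For irreducibility, I would use parts (2) and (3) of the quotient-tuple Proposition stated earlier. Part (2) gives that $\hat{\mathsf{k}}_\alpha$ is a BK-tuple, so (its defect being zero) it only remains to show it has no proper BK-subtuple. By part (3), BK-subtuples of $\hat{\mathsf{k}}_\alpha$ correspond bijectively to BK-subtuples of $\mathsf{k}_{(\alpha)}$ containing $\mathsf{k}_{(\alpha)\backslash\alpha}$. Under Birkhoff these translate to order ideals $I \subseteq P_{\mathsf{n}}$ with $(\alpha)\backslash\alpha \subseteq I \subseteq (\alpha)$, of which there are only two since $\alpha$ is join-irreducible and $(\alpha)\backslash\alpha \lessdot (\alpha)$ is a cover in $J(P_{\mathsf{n}})$. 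Hence $\hat{\mathsf{k}}_\alpha$ has only the trivial BK-subtuples, i.e.\ it is irreducible.

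For uniqueness, I would let $\mathsf{n} = \sqcup_{\beta \in Q} \mathsf{l}_\beta$ be any partition satisfying the stated condition and, by induction on height in $Q$, show via parts (2) and (4) of the same Proposition that every $\mathsf{l}_I$ with $I \in J(Q)$ is a BK-subtuple of $\mathsf{n}$. This yields an injective lattice map $J(Q) \hookrightarrow L_{\mathsf{n}}$, and the irreducibility assumption on each $\hat{\mathsf{l}}_\beta$—applied through part (3)—forces this map to be surjective: any missing BK-subtuple would yield an order ideal strictly between $(\beta)\backslash\beta$ and $(\beta)$ for some $\beta$, contradicting irreducibility of $\hat{\mathsf{l}}_\beta$. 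Birkhoff's uniqueness of the poset associated to a distributive lattice then identifies $Q$ with $P_{\mathsf{n}}$ on the nose. The hard part is the surjectivity step: one has to reduce the analysis of an arbitrary BK-subtuple of $\mathsf{n}$ to one of an irreducible quotient so that part (3) applies, and the iterated-quotient identity in part (4) is the tool that makes this reduction work.
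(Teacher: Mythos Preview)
The paper does not contain a proof of this theorem: it is cited from \cite{pokidkin_sublattice_nodate}, a companion paper in preparation, as the author explicitly notes in the paragraph following the definition of irreducible and BK-tuples (``Some combinatorial properties of these objects\ldots\ are proved in a separate paper''). There is therefore nothing in the present paper to compare your proposal against.

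On its own merits your proposal is sound for existence and irreducibility. The partition is indeed set up in the paragraph preceding the theorem via Birkhoff, and your irreducibility argument---that BK-subtuples of $\hat{\mathsf{k}}_\alpha$ correspond via part~(3) of the Proposition to order ideals between $(\alpha)\backslash\alpha$ and $(\alpha)$, of which there are only two---is correct and clean. For uniqueness, your outline is reasonable but looser than the other parts. Step~1 is easier than you suggest: once the hypothesis guarantees each $\mathsf{l}_{(\beta)}$ is a BK-subtuple (which it must, for $\hat{\mathsf{l}}_\beta$ even to be defined), every $\mathsf{l}_I$ is a union of such and hence BK by the closure-under-union fact stated just before the theorem; no induction or part~(4) is needed. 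The genuine work is the surjectivity step, and your one-line justification (``any missing BK-subtuple would yield an order ideal strictly between $(\beta)\backslash\beta$ and $(\beta)$'') skips the reduction: you must explain why an arbitrary BK-subtuple $\mathsf{m} \notin \operatorname{im} J(Q)$ can be localized to such an interval, e.g.\ by intersecting and taking unions with the $\mathsf{l}_{(\beta)}$ and choosing $\beta$ minimal with $\mathsf{m} \cap \mathsf{l}_\beta \neq \emptyset$ but $\mathsf{l}_\beta \not\subseteq \mathsf{m}$.
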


We use these results to construct partitions of tuples of finite sets.
The \textit{affine linear span} of a finite set $A$ in a lattice
$M$ is the affine sublattice $\langle A\rangle$ generated by this
set. Using a shift, we can always ensure that the finite set $A$
contains the origin, and its linear span is a sublattice in $M$.
Then a tuple of finite sets $\mathscr{A}=(A_{1},...,A_{m})$ with
the common origin generates a sublattice tuple that can be considered
as a realizable polymatroid.

General polymatroids admit similar characterizations and terminology
as sublattice tuples. For the sake of simplicity and because there
is no necessity, we do not provide results about polymatroids. Instead,
we present results for tuples of sublattices from a more general theory.
However, it was the study of discriminants that prompted results in
the polymatroid theory and inspired the work \cite{pokidkin_combinatorics_2025}.

We characterize a tuple of finite sets via the generated sublattice
tuple.

By the mixed volume $\mathrm{MV}_{M}(\mathscr{A})$ of a tuple of
finite sets $\mathscr{A}$ in a lattice $M$, we mean the mixed volume
of the corresponding convex hulls of finite sets. We highlight that
a BK-tuple $\mathscr{A}$ has a positive mixed volume in the sublattice
$\langle\mathscr{A}\rangle$ by Minkowski's theorem (see Theorem 8
\cite{khovanskii_newton_2016}). Another significant result establishes
the compatibility of the poset partition of a reducible BK-tuple of
finite sets with its mixed volume:
\begin{thm}
\label{Theorem. BK-subtuple and mixed volume}For tuples $\mathscr{B}\subset\mathscr{A}$
with zero defect in a lattice $M$, the mixed volume decomposes 
\[
\mathrm{MV}_{M}(\mathscr{A})=\mathrm{MV}_{\overline{\langle\mathscr{B}\rangle}}(\mathscr{B})\,\mathrm{MV}_{M/\overline{\langle\mathscr{B}\rangle}}(\mathscr{A}/\mathscr{B}).
\]
\end{thm}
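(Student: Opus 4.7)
My plan is to derive this identity from Bernstein's theorem by counting the common zeros of a generic polynomial system in two different ways. Zero defect of $\mathscr{B}$ gives $|\mathscr{B}|=\dim\overline{\langle\mathscr{B}\rangle}$; for the left-hand side to be non-trivial we tacitly assume $|\mathscr{A}|=\dim M$, so that $|\mathscr{A}\setminus\mathscr{B}|=\dim(M/\overline{\langle\mathscr{B}\rangle})$ and the quotient mixed volume on the right makes sense. Setting $N=\overline{\langle\mathscr{B}\rangle}$, saturation guarantees that $N$ is a direct summand of $M$ and induces a surjection of tori $T_M\twoheadrightarrow T_{M/N}$ whose kernel is $T_N$, with a splitting $T_M\cong T_N\times T_{M/N}$.

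Take a generic system $(f_i)_{i\in\mathscr{A}}$ of Laurent polynomials supported on the sets $A_i$; by Bernstein's theorem it has $\mathrm{MV}_M(\mathscr{A})$ solutions in $T_M$. Since $B_j\subset N$ for each $j$ indexing $\mathscr{B}$, the polynomial $f_j$ depends only on the $T_N$-coordinate. The subsystem $\{f_j=0\}_{j\in\mathscr{B}}$ is therefore a system of $\dim N$ generic equations with supports $\mathscr{B}$ on the torus $T_N$, contributing $\mathrm{MV}_{\overline{\langle\mathscr{B}\rangle}}(\mathscr{B})$ solutions $x_0\in T_N$ by a first application of Bernstein. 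For each such $x_0$ the substituted residual system $\{f_i(x_0,\cdot)=0\}_{i\in\mathscr{A}\setminus\mathscr{B}}$ consists of $\dim(M/N)$ polynomials on $T_{M/N}$ with supports $\pi(A_i)$, the components of the quotient tuple $\mathscr{A}/\mathscr{B}$; a second application of Bernstein yields $\mathrm{MV}_{M/\overline{\langle\mathscr{B}\rangle}}(\mathscr{A}/\mathscr{B})$ further solutions. Multiplying the two counts reproduces the Bernstein number on $T_M$, establishing the claimed identity.

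The main technical obstacle is justifying the second use of Bernstein: the coefficients of the residual system are algebraic functions of $x_0$ and of the original coefficients and could in principle be Bernstein-degenerate for some $x_0$. I would handle this by observing that the set of coefficient tuples producing a degenerate residual system at any zero of $\{f_j=0\}_{j\in\mathscr{B}}$ is a proper Zariski-closed subset of the ambient coefficient space, since the incidence variety of coefficients and solutions $x_0$ is irreducible and projects dominantly onto both factors; choosing the original coefficients in its complement makes both counting steps exact. An alternative route, purely combinatorial, would use the mixed-volume polarization formula together with the fact that volumes split along lattice direct summands $M = N\oplus (M/N)$, but the Bernstein approach above is both shorter and better aligned with the algebro-geometric viewpoint used throughout the rest of the paper.
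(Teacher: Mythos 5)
Your argument is correct. The paper itself does not prove this statement --- it only cites Steffens--Theobald for a geometric proof and Esterov for an algebraic one --- and your Bernstein-style double count is essentially that algebraic proof: split the torus along the saturated sublattice $\overline{\langle\mathscr{B}\rangle}$, solve the $\mathscr{B}$-subsystem first, then the residual system on the quotient torus, and compare with the Bernstein number of the full system. The one delicate step, genericity of the residual systems, you handle adequately, though it can be disposed of more directly: for each of the finitely many roots $x_{0}$ of the generic $\mathscr{B}$-subsystem, the evaluation map $ev_{x_{0}}:\mathbb{C}_{\mathscr{A}\backslash\mathscr{B}}\rightarrow\mathbb{C}_{\mathscr{A}/\mathscr{B}}$ is a surjective linear projection depending only on the $\mathscr{B}$-coefficients, so a generic choice of the remaining coefficients makes all residual systems Bernstein-generic simultaneously (and the degenerate case $\mathrm{MV}_{\overline{\langle\mathscr{B}\rangle}}(\mathscr{B})=0$ makes both sides vanish).
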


\begin{proof}
See Lemma 4 \cite{steffens_mixed_2010} for a geometric proof, Lemma
3.12 \cite{dandrea_poisson_2015} for an analytic proof and Theorem
1.10 \cite{esterov_galois_2019} for an algebraic proof.
\end{proof}
\begin{cor}
\textup{\label{Corollary. Unique decomposition of a BK-tuple}}A reducible
BK-tuple $\mathscr{A}$ admits the unique partition $\mathscr{A}=\underset{\alpha\in\mathtt{P}_{\mathscr{A}}}{\sqcup}\mathscr{B}_{\alpha}$
such that tuples $\hat{\mathscr{B}}_{\alpha}=\mathscr{B}_{(\alpha)}/\mathscr{B}_{(\alpha)\backslash\alpha}$
are irreducible BK-tuples for every element $\alpha$ of the poset
$\mathtt{P}_{\mathscr{A}}.$
\end{cor}

For a BK-tuple, BK-subtuples form a distributive lattice. Hence it
is possible to choose a basis in the ambient lattice such that every
BK-subtuple lies in a coordinate sublattice by Proposition 7.1 \cite{polishchuk_quadratic_2005}.

For a dependent tuple of finite sets, independent subtuples are independent
subsets of the induced matroid from the realizable polymatroid \cite{pokidkin_combinatorics_2025}.
We will need the subtuple that is the maximal cycle of the induced
matroid to characterize the discriminant.

\section{\label{Section. Discriminants-of-polynomial}Discriminants of polynomial
systems}

For a group lattice $N,$ the \textit{dual lattice} is $M=N^{\vee}=\mathrm{Hom}_{\mathbb{Z}}(N,\mathbb{Z}),$
and the \textit{algebraic torus} is $T(N)=N\otimes_{\mathbb{Z}}\mathbb{C}^{\times}.$
The \textit{character group}\textbf{ }is $\mathrm{Hom}_{\mathbb{Z}}(T(N),\mathbb{C}^{\times})$,
and its elements are called \textit{characters} $\chi_{a}$ \cite{fulton_introduction_1993,cox_toric_2011}.
This character group is isomorphic to the dual lattice $M$. For a
tuple of finite sets $\mathscr{A}\subset M$, we have defined the
space of polynomial systems $\mathbb{C}_{\mathscr{A}}$, and every
polynomial system $\Phi$ has solutions in the torus $T(N).$

For a split short exact sequence of lattices $0\rightarrow N'\rightarrow N\overset{\pi}{\rightarrow}N''\rightarrow0$,
we have the corresponding split short exact sequences for the dual
lattices $0\rightarrow M''\overset{i}{\rightarrow}M\rightarrow M'\rightarrow0$,
the character groups and the tori. The projection of lattices $\pi$
induces the \textit{pushforward} $\pi^{*}=\pi\otimes\mathbb{C}^{\times}$
of tori and the \textit{pullback} $\pi_{*}=\mathrm{Hom}\,(\pi^{*},\mathbb{C}^{\times})$
of character groups, $\pi_{*}(\chi_{a})=\chi_{a}\circ\pi^{*}=\chi_{i(a)},$
for a character $\chi_{a}$ from $N''$. 

A monomorphism of dual lattices $M''\overset{i}{\rightarrow}M$ provides
an equality for the discriminants: $\pi_{*}(D_{\mathscr{A}})=D_{i(\mathscr{A})}$.
Hence an isomorphism of dual lattices $M''\overset{i}{\rightarrow}M$
leads to an isomorphism of discriminants: $D_{\mathscr{A}}\cong D_{i(\mathscr{A})}$.
In particular, the discriminants $D_{\mathscr{A}}\cong D_{g\mathscr{A}}$
are isomorphic for every element $g$ of the affine general linear
group $\mathsf{AGL}(n,\mathbb{Z})$. Also, if there exists a one-to-one
correspondence between sets from tuples $\mathscr{A}$ and $\mathscr{B}$
such that each set $A\in\mathscr{A}$ is a translation of a set $B\in\mathscr{B}$,
then the discriminants are equal, $D_{\mathscr{A}}=D_{\mathscr{B}}$. 

These observations allow us to choose convenient tuples for subsequent
proofs and to use the results of Section \ref{Section. Combinatorial Review}.
In the sequel, we assume that the linear span of the tuple $\mathscr{A}$
equals the dual lattice, $\langle\mathscr{A}\rangle=M$. For the torus
$T(N)$ we use the notation $T(\mathscr{A})$.

Consider a split short exact sequence of dual lattices, $0\rightarrow M''\rightarrow M\rightarrow M'\rightarrow0$.
Lattices are reflexive $\mathbb{Z}$-modules (there is an isomorphism
$N\cong\mathrm{Hom}\,(\mathrm{Hom}\,(N,\mathbb{Z}),\mathbb{Z})$)
as finitely generated free $\mathbb{Z}$-modules. This means that
the lattice $\mathrm{Hom}\,(M'',\mathbb{Z})$ is isomorphic to some
lattice $N''$ such that $M''=\mathrm{Hom}\,(N'',\mathbb{Z})$. Let
us compute the lattice $N''$. Notice that the dual sublattice $M''$
naturally corresponds to the sublattice $N'=M''^{\perp}=\{n\in N\,|\;m(n)=0\;\forall m\in M''\}\subseteq N.$
From the splitting, the lattice $N''$ can be defined as the quotient
$N/N'=N/M''^{\perp}$.

For a subtuple $\mathscr{B}$ of a tuple $\mathscr{A}$, we search
for solutions of a subsystem from $\mathbb{C}_{\mathscr{B}}$ in the
special torus $T(N/\overline{\langle\mathscr{B}\rangle}^{\bot}),$
denoted as $T(\mathscr{B})$.

A\textbf{ }\textit{cofiltration} $G_{\bullet}N$ is a sequence of
quotients $N=G_{k}N\twoheadrightarrow G_{k-1}N\twoheadrightarrow...\twoheadrightarrow G_{0}N\twoheadrightarrow0.$
There is a bijection between the cofiltrations of a lattice $N$ and
the filtrations of the dual lattice $M$.

Every BK-tuple $\mathscr{A}$ admits a maximal BK-filtration $F_{\bullet}$.
The saturated linear spans of the tuples from the BK-filtration form
a filtration of the dual lattice $F_{\bullet}M,$ $F_{i}M=\overline{\langle F_{i}\mathscr{A}\rangle.}$
Then there exists a cofiltration $G_{\bullet}N$, and, hence, the
cofiltration for the torus $G_{\bullet}T(N)=T(G_{\bullet}N).$ Therefore,
every polynomial system $\Phi\in\mathbb{C}_{\mathscr{A}}$ admits
a BK-filtration $F_{\bullet}\Phi$, and the solutions of that system
admit a cofiltration $G_{\bullet}V(\Phi).$ If the system $\Phi$
is generic, then the projections of finite sets $G_{i-1}V(\Phi)\twoheadrightarrow G_{i}V(\Phi)$
are finite covers with degrees $\mathrm{MV}_{\text{\ensuremath{G_{i-1}N/G_{i}N}}}(F_{i}\mathscr{A}/F_{i-1}\mathscr{A})$
by Theorem \ref{Theorem. BK-subtuple and mixed volume} and by the
Kouchnirenko-Bernstein theorem. For every subsystem $F_{i}(\Phi_{\mathscr{A}})=\Phi_{\mathscr{B}}$,
we will search for solutions in the torus $T(\mathscr{B})=T(G_{i}N).$

\section{\label{Section. BK-multiplicaiton}BK-multiplication}

To compute discriminants for BK-tuples, we build a specific multiplication
between varieties. 

Consider a split short exact sequence of dual lattices $0\rightarrow M''\rightarrow M\overset{\tau}{\rightarrow}M'\rightarrow0$,
a finite set $A\subset M,$ its projection $B=\tau(A),$ and the corresponding
splitting of the algebraic torus $T(N)=T(N'')\times T(N')$. We can
represent a point in the torus $T(N)$ as a pair $(x,y)\in T(N'')\times T(N').$
Then the substitution $f(x,y)\overset{ev_{x_{0}}}{\longrightarrow}f(x_{0},y)$
of a fixed point $x_{0}$ from the torus $T(N'')$ into polynomials
from $\mathbb{C}_{A}$ is a linear projection on $\mathbb{C}_{B}$,
$\mathbb{C}_{A}\overset{ev_{x_{0}}}{\longrightarrow}\mathbb{C}_{B}$.
For a tuple of finite sets $\mathscr{A}\subset M,$ $\mathscr{B}=\tau(\mathscr{A})$,
the substitution $\Phi(x,y)\overset{ev_{x_{0}}}{\longrightarrow}\Phi(x_{0},y)$
of the point $x_{0}$ corresponds to a linear projection $\mathbb{C}_{\mathscr{A}}\overset{ev_{x_{0}}}{\longrightarrow}\mathbb{C}_{\mathscr{B}}$.
\begin{lem}
\label{Lemma. Evaluation bundle is a trivial vector bundle}For any
quasi-affine algebraic set $Y\subset\mathbb{C}_{\mathscr{B}}$, a
point $x$ from the torus $T(N'')$, and the preimage $E=ev_{x}^{-1}(Y)\subseteq\mathbb{C}_{\mathscr{A}}$,
the triple $(E,ev_{x},Y)$ is a trivial vector bundle over $Y$ of
rank $\underset{A\in\mathscr{A}}{\sum}|A|-|\tau(A)|$.
\end{lem}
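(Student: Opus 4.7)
The plan is to observe that $\mathrm{ev}_{x_0}\colon\mathbb{C}_{\mathscr{A}}\to\mathbb{C}_{\mathscr{B}}$ is a surjective \emph{linear} map of finite-dimensional $\mathbb{C}$-vector spaces; from this the trivial vector bundle structure on the preimage of any quasi-affine $Y$ will be automatic, and only the rank computation will require actual work.

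First I would write $\mathrm{ev}_{x_0}$ out explicitly. A choice of linear splitting $s\colon M'\to M$ of $0\to M''\to M\xrightarrow{\tau} M'\to 0$ decomposes every $a\in M$ as $a=a''+s(\tau(a))$ with $a''\in M''$, so $\chi_a(x,y)=\chi_{a''}(x)\,\chi_{\tau(a)}(y)$, and therefore
\[
\mathrm{ev}_{x_0}\Bigl(\sum_{a\in A}c_a\chi_a\Bigr)=\sum_{b\in\tau(A)}\Bigl(\sum_{a\in A,\ \tau(a)=b}c_a\,\chi_{a''}(x_0)\Bigr)\chi_b.
\]
Linearity is manifest from this formula.

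Next I would extract the rank. Because $\chi_{a''}(x_0)\neq 0$ for $x_0\in T(N'')$, each fiber $\tau^{-1}(b)\cap A$ contributes a single non-trivial linear relation among its $|\tau^{-1}(b)\cap A|$ coefficients. Hence $\mathrm{ev}_{x_0}|_{\mathbb{C}_A}$ is surjective with kernel of dimension $|A|-|\tau(A)|$. Taking the direct sum over $A\in\mathscr{A}$ then yields surjectivity of $\mathrm{ev}_{x_0}\colon\mathbb{C}_{\mathscr{A}}\to\mathbb{C}_{\mathscr{B}}$ with kernel of dimension $\sum_{A\in\mathscr{A}}(|A|-|\tau(A)|)$, matching the claimed rank.

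Finally, any linear section $\sigma\colon\mathbb{C}_{\mathscr{B}}\to\mathbb{C}_{\mathscr{A}}$ of $\mathrm{ev}_{x_0}$ (which exists in every short exact sequence of vector spaces) induces an algebraic isomorphism $\Phi\colon Y\times\ker(\mathrm{ev}_{x_0})\to E=\mathrm{ev}_x^{-1}(Y)$ by $(y,k)\mapsto\sigma(y)+k$, with algebraic inverse $e\mapsto(\mathrm{ev}_{x_0}(e),\,e-\sigma(\mathrm{ev}_{x_0}(e)))$, and $\Phi$ commutes with the projection to $Y$ by construction. This trivializes the bundle. There is no real obstacle: the only genuine content is the rank calculation above; triviality is a formal consequence of linearity of $\mathrm{ev}_{x_0}$.
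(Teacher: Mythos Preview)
Your proof is correct and follows the same approach as the paper: the paper states (just before the lemma) that $ev_{x_0}$ is a linear projection $\mathbb{C}_{\mathscr{A}}\to\mathbb{C}_{\mathscr{B}}$ and then asserts the lemma without further proof, treating it as immediate. Your argument simply makes explicit the linearity, surjectivity, kernel dimension, and trivialization via a linear section that the paper leaves implicit.
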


\begin{defn}
We call the total space $E$ the \textit{evaluation bundle} over $Y$
for the evaluation by a point $x$ from the torus $T(N'')$ and denote
it by $E_{\mathscr{A}}^{x}(Y)=E$.
\end{defn}

\begin{rem}
\label{Remark. Evaluation bundle}For tuples of finite sets $\mathscr{B}\subset\mathscr{A}$
and a point $x$ from the torus $T(\mathscr{B})$, the evaluation
bundle $E_{\mathscr{A}}^{x}(Y)$ is a variety in $\mathbb{C}_{\mathscr{A}\backslash\mathscr{B}}$
if and only if $Y$ is a variety in $\mathbb{C}_{\mathscr{A}/\mathscr{B}}$. 
\end{rem}

\begin{lem}
\label{Lemma. Fiber change}For a chain of BK-tuples $\mathscr{C}\subset\mathscr{B}\subset\mathscr{A}$,
a quasi-affine algebraic set $Y\subset\mathbb{C}_{\mathscr{B}/\mathscr{C}}$
and a point $x\in T(\mathscr{C})$, the following holds:
\[
E_{\mathscr{A}\backslash\mathscr{C}}^{x}\left(Y\times\mathbb{C}_{(\mathscr{A}\backslash\mathscr{B})/\mathscr{C}}\right)=E_{\mathscr{B}\backslash\mathscr{C}}^{x}(Y)\times\mathbb{C}_{\mathscr{A}\backslash\mathscr{B}}.
\]
\end{lem}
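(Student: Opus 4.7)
The plan is to observe that the evaluation map $ev_x$ on $\mathbb{C}_{\mathscr{A}\backslash\mathscr{C}}$ splits as a product along the decomposition of the index set $\mathscr{A}\backslash\mathscr{C}=(\mathscr{B}\backslash\mathscr{C})\sqcup(\mathscr{A}\backslash\mathscr{B})$, after which the identity becomes formal.

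More precisely, first I would note that since $\mathscr{C}\subset\mathscr{B}\subset\mathscr{A}$ are BK-tuples, the quotient construction from Section \ref{Section. Combinatorial Review} gives the decomposition of index sets
\[
(\mathscr{A}\backslash\mathscr{C})/\mathscr{C}=(\mathscr{B}/\mathscr{C})\sqcup\bigl((\mathscr{A}\backslash\mathscr{B})/\mathscr{C}\bigr),
\]
and hence a direct sum decomposition of polynomial spaces
\[
\mathbb{C}_{\mathscr{A}\backslash\mathscr{C}}=\mathbb{C}_{\mathscr{B}\backslash\mathscr{C}}\oplus\mathbb{C}_{\mathscr{A}\backslash\mathscr{B}},\qquad
\mathbb{C}_{(\mathscr{A}\backslash\mathscr{C})/\mathscr{C}}=\mathbb{C}_{\mathscr{B}/\mathscr{C}}\oplus\mathbb{C}_{(\mathscr{A}\backslash\mathscr{B})/\mathscr{C}}.
\]

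Next, since the substitution $\Phi(x,y)\mapsto\Phi(x,y)|_{x_{0}}$ is applied to each polynomial in a system separately, the map $ev_{x}:\mathbb{C}_{\mathscr{A}\backslash\mathscr{C}}\to\mathbb{C}_{(\mathscr{A}\backslash\mathscr{C})/\mathscr{C}}$ is simply the product of the two componentwise evaluation maps
\[
ev_{x}^{1}:\mathbb{C}_{\mathscr{B}\backslash\mathscr{C}}\to\mathbb{C}_{\mathscr{B}/\mathscr{C}},\qquad
ev_{x}^{2}:\mathbb{C}_{\mathscr{A}\backslash\mathscr{B}}\to\mathbb{C}_{(\mathscr{A}\backslash\mathscr{B})/\mathscr{C}},
\]
with respect to the decompositions above. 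This is the key observation and follows immediately from the definition of $ev_{x}$ given in Section \ref{Section. BK-multiplicaiton}.

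Given this product decomposition, the preimage of the cylinder $Y\times\mathbb{C}_{(\mathscr{A}\backslash\mathscr{B})/\mathscr{C}}$ splits as
\[
ev_{x}^{-1}\bigl(Y\times\mathbb{C}_{(\mathscr{A}\backslash\mathscr{B})/\mathscr{C}}\bigr)=(ev_{x}^{1})^{-1}(Y)\times(ev_{x}^{2})^{-1}\bigl(\mathbb{C}_{(\mathscr{A}\backslash\mathscr{B})/\mathscr{C}}\bigr).
\]
The first factor is $E_{\mathscr{B}\backslash\mathscr{C}}^{x}(Y)$ by definition. For the second, $(ev_{x}^{2})^{-1}$ of the whole codomain is the whole domain $\mathbb{C}_{\mathscr{A}\backslash\mathscr{B}}$, and this yields the desired equality.

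I do not anticipate a real obstacle here, as the statement is essentially combinatorial bookkeeping together with the functoriality of $ev_{x}$ on direct sums of monomial spaces. The only point meriting attention is checking that the two decompositions used above are the ones respected by the evaluation map, i.e.\ that the projection $\pi:M\to M/\overline{\langle\mathscr{C}\rangle}$ underlying $ev_{x}$ is the common projection for both $\mathscr{B}\backslash\mathscr{C}$ and $\mathscr{A}\backslash\mathscr{B}$, which is automatic from the chain $\mathscr{C}\subset\mathscr{B}\subset\mathscr{A}$.
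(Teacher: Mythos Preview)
Your proposal is correct and follows essentially the same approach as the paper: the paper also observes that $ev_x$ splits as a direct sum $u_x\oplus v_x$ along $\mathbb{C}_{\mathscr{A}\backslash\mathscr{C}}=\mathbb{C}_{\mathscr{B}\backslash\mathscr{C}}\oplus\mathbb{C}_{\mathscr{A}\backslash\mathscr{B}}$ and then takes the preimage factorwise. Your write-up is slightly more detailed about why the splitting holds, but the argument is identical.
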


\begin{proof}
Notice that the evaluation map $ev_{x}:\mathbb{C}_{\mathscr{A}\backslash\mathscr{C}}\rightarrow\mathbb{C}_{\mathscr{A}/\mathscr{C}}$
splits into two 

$u_{x}:\mathbb{C}_{\mathscr{B}\backslash\mathscr{C}}\rightarrow\mathbb{C}_{\mathscr{B}/\mathscr{C}}$
and $v_{x}:\mathbb{C}_{\mathscr{A}\backslash\mathscr{B}}\rightarrow\mathbb{C}_{(\mathscr{A}\backslash\mathscr{B})/\mathscr{C}}$,
$ev_{x}=u_{x}\oplus v_{x}.$ 

Then we have $ev_{x}^{-1}(Y\times\mathbb{C}_{(\mathscr{A}\backslash\mathscr{B})/\mathscr{C}})=u_{x}^{-1}(Y)\times v_{x}^{-1}(\mathbb{C}_{(\mathscr{A}\backslash\mathscr{B})/\mathscr{C}}).$
\end{proof}
\begin{thm}
\textup{\label{Theorem. Reformulation of Kouchnirenko-Bernstein theorem}(Kouchnirenko-Bernstein,
\cite{bernshtein_number_1975})} For a tuple $\mathscr{A}$ of $n$
finite sets in an $n$-dimensional lattice $M$, there exists an open
subset $U$ in $\mathbb{C}_{\mathscr{A}}$ such that the set of solutions
for every polynomial system $\Phi\in U$ consists of exactly $\mathrm{MV}_{M}(\mathscr{A})$-points.
\end{thm}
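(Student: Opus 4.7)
The plan is to prove this by induction on the cardinality $n$, using the combinatorial structure from Section~\ref{Section. Combinatorial Review} and the evaluation-bundle machinery from Section~\ref{Section. BK-multiplicaiton}. First I would dispose of the case where some subtuple $\mathscr{B}\subset\mathscr{A}$ has negative defect: then $\mathrm{MV}_{M}(\mathscr{A})=0$ by Minkowski's theorem, and the subsystem $\Phi_{\mathscr{B}}$ already has no solutions in $T(\mathscr{B})$ for generic $\Phi_{\mathscr{B}}$ (the $|\mathscr{B}|$ Laurent polynomials cut out a variety of expected dimension $\dim\langle\mathscr{B}\rangle-|\mathscr{B}|<0$, so their generic common zero locus is empty). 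This reduces the problem to essential tuples, equivalently BK-tuples of zero overall defect.

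Next, for a reducible BK-tuple I would use the maximal BK-filtration from Proposition~\ref{Corollary. Filtration of BK-tuple with MV>0-1} to split off an irreducible BK-subtuple $\mathscr{B}\subsetneq\mathscr{A}$. By the inductive hypothesis applied to $\mathscr{B}$, a generic subsystem $\Phi_{\mathscr{B}}\in\mathbb{C}_{\mathscr{B}}$ has exactly $\mathrm{MV}_{\overline{\langle\mathscr{B}\rangle}}(\mathscr{B})$ roots $x_{0}\in T(\mathscr{B})$. For each such $x_{0}$, Lemma~\ref{Lemma. Evaluation bundle is a trivial vector bundle} (the trivial vector bundle structure of $ev_{x_{0}}$) tells us that a generic choice of $\Phi_{\mathscr{A}\setminus\mathscr{B}}\in\mathbb{C}_{\mathscr{A}\setminus\mathscr{B}}$ evaluates to a generic element of $\mathbb{C}_{\mathscr{A}/\mathscr{B}}$; since $\mathscr{A}/\mathscr{B}$ is a BK-tuple of strictly smaller cardinality, the induction hypothesis supplies exactly $\mathrm{MV}_{M/\overline{\langle\mathscr{B}\rangle}}(\mathscr{A}/\mathscr{B})$ additional roots above each $x_{0}$. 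Multiplying and appealing to the mixed-volume decomposition in Theorem~\ref{Theorem. BK-subtuple and mixed volume} gives the desired count $\mathrm{MV}_{M}(\mathscr{A})$.

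This leaves the base of the recursion: an irreducible BK-tuple. I expect this to be the main obstacle, because no proper BK-subtuple is available and the filtration argument stalls. The standard route is to compactify $T(N)$ to a smooth projective toric variety $X_{\Sigma}$ whose fan refines the normal fans of the polytopes $\mathrm{Conv}\,A_{i}$; each $A_{i}$ then defines a nef divisor $D_{i}$, and the top intersection $D_{1}\cdots D_{n}$ on $X_{\Sigma}$ equals $\mathrm{MV}_{M}(\mathscr{A})$ by the classical toric computation. One must then verify that for generic $\Phi$ none of these intersection points escape to the boundary: on each torus-invariant orbit $O_{\sigma}$, the restricted tuple has strictly negative defect (by irreducibility of $\mathscr{A}$), so by the first paragraph has no common zeros generically. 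The anchor case $n=1$ is the fundamental theorem of algebra: a generic Laurent polynomial with support $A\subset\mathbb{Z}$ has exactly $\max A-\min A=\mathrm{MV}(A)$ nonzero roots.
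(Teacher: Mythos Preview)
The paper does not prove this theorem: it is stated as the classical Kouchnirenko--Bernstein theorem and simply cited from \cite{bernshtein_number_1975}, with no argument given. So there is nothing to compare your proposal against on the paper's side.

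Your sketch is broadly reasonable but the inductive scaffolding is largely redundant. The toric-compactification argument you invoke for the ``base case'' (irreducible BK-tuples) already works verbatim for \emph{any} BK-tuple, so splitting off a BK-subtuple via the filtration from Proposition~\ref{Corollary. Filtration of BK-tuple with MV>0-1} and invoking Theorem~\ref{Theorem. BK-subtuple and mixed volume} buys nothing: you still have to run the full toric argument at the bottom, and that argument does not care whether the tuple is irreducible. In particular, your justification for the boundary step is misstated. On a torus orbit $O_{\sigma}$ of codimension $k>0$, the restricted tuple consists of $n$ finite sets in a lattice of rank $n-k$, so the full tuple already has defect $-k<0$; irreducibility of $\mathscr{A}$ plays no role here. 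The absence of generic boundary solutions then follows from the same overdetermined-system argument you used in your first paragraph. Also note a potential circularity: the algebraic proof of Theorem~\ref{Theorem. BK-subtuple and mixed volume} cited from \cite{esterov_galois_2019} uses Kouchnirenko--Bernstein, so if you want your induction to be non-circular you must rely on the geometric proof from \cite{steffens_mixed_2010}.
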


The complement $\mathbb{C}_{\mathscr{A}}\backslash U$ is a \textit{bifurcation
divisor} according to Esterov's work \cite{esterov_discriminant_2013}.
Discriminants of different types lie in the bifurcation divisor.
\begin{defn}
\label{Definition. BK-multiplication}For BK-tuples of finite sets
$\mathscr{B}\subset\mathscr{A},$ the \textit{BK-multiplication} $X\circ Y\subseteq\mathbb{C}_{\mathscr{A}}$
of algebraic sets $X\subset\mathbb{C}_{\mathscr{B}}$ and $Y\subset\mathbb{C}_{\mathscr{A}/\mathscr{B}}$
is called the quasi-affine set $\{\Phi\times\underset{x\in V(\Phi)}{\cup}ev_{x}^{-1}(Y)\,|\;\Phi\in X\}\subset\mathbb{C}_{\mathscr{A}}$,
where $V(\Phi)$ is a set of zeroes for a polynomial system $\Phi\in X$
in the torus $T(\mathscr{B})$ and the preimage $ev_{x}^{-1}(Y)=E_{\mathscr{A}\backslash\mathscr{B}}^{x}(Y)$
is an evaluation bundle in $\mathbb{C}_{\mathscr{A}\backslash\mathscr{B}}$
over $Y$ for the evaluation by a root $x\in T(\mathscr{B})$ of the
polynomial system $\Phi$. Denote by $X\bullet Y$ the Zariski closure
of the BK-multiplication $X\circ Y.$
\end{defn}

\begin{rem}
1) By the Kouchnirenko-Bernstein Theorem \ref{Theorem. Reformulation of Kouchnirenko-Bernstein theorem},
the generic fiber in the BK-multiplication is a union of $\mathrm{MV}_{\overline{\langle\mathscr{B}\rangle}}(\mathscr{B})$
different trivial evaluation bundles over $Y$.

2) Denote by $Z_{\mathscr{B}}$ the set of polynomial systems in $\mathbb{C}_{\mathscr{B}}$
with an empty set of solutions. Then the contribution $\Phi\times\underset{x\in V(\Phi)}{\cup}ev_{x}^{-1}(Y)$
is empty for every polynomial system $\Phi\in Z_{\mathscr{B}}$. That
is why we take the algebraic closure of the multiplication.
\end{rem}

\begin{lem}
\label{Lemma. BK-multiplication for C}The following holds: $\mathbb{C}_{\mathscr{A}}=\mathbb{C}_{\mathscr{B}}\bullet\mathbb{C}_{\mathscr{A}/\mathscr{B}}$.
\end{lem}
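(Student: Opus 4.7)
My plan is to unwind the definition of the BK-multiplication in the special case when both factors are the full ambient spaces, show that the resulting set is constructible and dense in $\mathbb{C}_{\mathscr{A}}$, and then conclude by taking closures. The key observation is that, when $Y$ is the whole target space of $ev_x$, the preimage $ev_x^{-1}(Y)$ is the whole source space, so the fiber of the BK-multiplication over a system $\Phi$ is either empty or everything, depending only on whether $V(\Phi)$ is non-empty.

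More concretely, I would first note that $\mathbb{C}_{\mathscr{A}}$ splits as the direct sum $\mathbb{C}_{\mathscr{B}}\oplus\mathbb{C}_{\mathscr{A}\backslash\mathscr{B}}$, and for any $x\in T(\mathscr{B})$ the linear evaluation projection $ev_x:\mathbb{C}_{\mathscr{A}\backslash\mathscr{B}}\to\mathbb{C}_{\mathscr{A}/\mathscr{B}}$ satisfies $ev_x^{-1}(\mathbb{C}_{\mathscr{A}/\mathscr{B}})=\mathbb{C}_{\mathscr{A}\backslash\mathscr{B}}$. Substituting $X=\mathbb{C}_{\mathscr{B}}$, $Y=\mathbb{C}_{\mathscr{A}/\mathscr{B}}$ in Definition \ref{Definition. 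BK-multiplication} therefore gives
\[
\mathbb{C}_{\mathscr{B}}\circ\mathbb{C}_{\mathscr{A}/\mathscr{B}}=(\mathbb{C}_{\mathscr{B}}\setminus Z_{\mathscr{B}})\times\mathbb{C}_{\mathscr{A}\backslash\mathscr{B}},
\]
since a point $(\Phi,\Psi)$ lies in the left-hand set iff there exists some $x\in V(\Phi)$ (and any $\Psi\in\mathbb{C}_{\mathscr{A}\backslash\mathscr{B}}$ is then automatically in $ev_x^{-1}(Y)$).

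The final step is to verify that $\mathbb{C}_{\mathscr{B}}\setminus Z_{\mathscr{B}}$ is Zariski-dense in $\mathbb{C}_{\mathscr{B}}$, so that the product above is dense in $\mathbb{C}_{\mathscr{A}}$ and its closure is the whole space. This is exactly where the BK-hypothesis on $\mathscr{B}$ enters: since $\mathscr{B}$ is a BK-tuple, Minkowski's theorem (quoted just after Theorem \ref{Theorem. Poset partition of a reducible BK-tuple }) gives $\mathrm{MV}_{\overline{\langle\mathscr{B}\rangle}}(\mathscr{B})>0$, and then the Kouchnirenko--Bernstein theorem \ref{Theorem. Reformulation of Kouchnirenko-Bernstein theorem} produces a dense open $U\subset\mathbb{C}_{\mathscr{B}}$ on which $|V(\Phi)|=\mathrm{MV}_{\overline{\langle\mathscr{B}\rangle}}(\mathscr{B})>0$. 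In particular $U\cap Z_{\mathscr{B}}=\varnothing$, so $Z_{\mathscr{B}}$ is contained in a proper closed subset and $\mathbb{C}_{\mathscr{B}}\setminus Z_{\mathscr{B}}$ is dense. Taking the algebraic closure of both sides of the displayed identity yields $\mathbb{C}_{\mathscr{B}}\bullet\mathbb{C}_{\mathscr{A}/\mathscr{B}}=\mathbb{C}_{\mathscr{A}}$.

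There is no real obstacle here; the only point worth handling carefully is the bookkeeping around $ev_x^{-1}$ of a full ambient space and the fact that the positivity of $\mathrm{MV}(\mathscr{B})$ (not merely the BK-property in the abstract) is what makes $Z_{\mathscr{B}}$ a proper subvariety. This is also the template I would reuse in later, more substantive BK-multiplication computations, where $Y$ will be a proper subvariety and the fiberwise structure becomes genuinely non-trivial.
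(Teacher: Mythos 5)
Your proof is correct and follows essentially the same route as the paper: identify each nonempty fiber of the BK-multiplication with $\mathbb{C}_{\mathscr{A}\backslash\mathscr{B}}$ and take the closure of $(\mathbb{C}_{\mathscr{B}}\setminus Z_{\mathscr{B}})\times\mathbb{C}_{\mathscr{A}\backslash\mathscr{B}}$. The only difference is that you make explicit, via Minkowski's theorem and Theorem \ref{Theorem. Reformulation of Kouchnirenko-Bernstein theorem}, the density of $\mathbb{C}_{\mathscr{B}}\setminus Z_{\mathscr{B}}$, a point the paper leaves implicit.
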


\begin{proof}
Every system $\Phi$ from $\mathbb{C}_{\mathscr{B}}\backslash Z_{\mathscr{B}}$
has at least one solution $x$. Hence every fiber equals the same
vector space $\underset{x\in V(\Phi)}{\cup}ev_{x}^{-1}(\mathbb{C}_{\mathscr{A}/\mathscr{B}})=\mathbb{C}_{\mathscr{A}\backslash\mathscr{B}}$.
Therefore, the algebraic closure of the set $\{\Phi\times\mathbb{C}_{\mathscr{A}\backslash\mathscr{B}}\,|\;\Phi\in\mathbb{C}_{\mathscr{B}}\backslash Z_{\mathscr{B}}\}$
coincides with the space $\mathbb{C}_{\mathscr{A}}.$ This lemma is
a shadow of the mixed volume decomposition in Theorem \ref{Theorem. BK-subtuple and mixed volume}.
\end{proof}
\begin{cor}
\label{Corollay. BK-multiplicaiton for XoC}Every variety $X$ from
$\mathbb{C}_{\mathscr{B}}\backslash Z_{\mathscr{B}}$ satisfies the
equality: $X\bullet\mathbb{C}_{\mathscr{A}/\mathscr{B}}=\overline{X}\times\mathbb{C}_{\mathscr{A}\backslash\mathscr{B}}$.
\end{cor}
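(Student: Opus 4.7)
The intuition is that BK-multiplication by the entire fiber space $\mathbb{C}_{\mathscr{A}/\mathscr{B}}$ simply thickens each solvable system in $X$ by the whole complementary coordinate space, so the multiplication degenerates to a Cartesian product, after which one closes up.

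First I would unpack Definition \ref{Definition. BK-multiplication} with $Y = \mathbb{C}_{\mathscr{A}/\mathscr{B}}$. Since the evaluation map $ev_{x}:\mathbb{C}_{\mathscr{A}\backslash\mathscr{B}}\rightarrow\mathbb{C}_{\mathscr{A}/\mathscr{B}}$ is defined on all of $\mathbb{C}_{\mathscr{A}\backslash\mathscr{B}}$, one has $ev_{x}^{-1}(\mathbb{C}_{\mathscr{A}/\mathscr{B}})=\mathbb{C}_{\mathscr{A}\backslash\mathscr{B}}$ for every $x\in T(\mathscr{B})$. Substituting into the definition, the slice of $X\circ\mathbb{C}_{\mathscr{A}/\mathscr{B}}$ above a system $\Phi\in X$ equals the full $\mathbb{C}_{\mathscr{A}\backslash\mathscr{B}}$ as soon as $V(\Phi)\neq\emptyset$, and is empty when $\Phi\in Z_{\mathscr{B}}$. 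Hence, set-theoretically,
\[
X\circ\mathbb{C}_{\mathscr{A}/\mathscr{B}}=(X\setminus Z_{\mathscr{B}})\times\mathbb{C}_{\mathscr{A}\backslash\mathscr{B}}.
\]

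Second I would take the Zariski closure. Because $\mathscr{B}$ is a BK-tuple, Minkowski's theorem yields $\mathrm{MV}_{\overline{\langle\mathscr{B}\rangle}}(\mathscr{B})>0$, so Theorem \ref{Theorem. Reformulation of Kouchnirenko-Bernstein theorem} supplies a nonempty Zariski open subset of $\mathbb{C}_{\mathscr{B}}$ disjoint from $Z_{\mathscr{B}}$; in particular $Z_{\mathscr{B}}$ is contained in the bifurcation divisor, a proper closed subset. The hypothesis that $X$ comes ``from $\mathbb{C}_{\mathscr{B}}\setminus Z_{\mathscr{B}}$'' is read as saying that no irreducible component of $X$ is swallowed by that divisor, which forces $X\setminus Z_{\mathscr{B}}$ to be dense in $\overline{X}$. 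Since the factor $\mathbb{C}_{\mathscr{A}\backslash\mathscr{B}}$ is already closed, closure commutes with the Cartesian product and gives
\[
X\bullet\mathbb{C}_{\mathscr{A}/\mathscr{B}}=\overline{X}\times\mathbb{C}_{\mathscr{A}\backslash\mathscr{B}},
\]
as claimed.

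The only step requiring any actual argument is the density of $X\setminus Z_{\mathscr{B}}$ in $\overline{X}$; everything else is a direct bookkeeping of Definition \ref{Definition. BK-multiplication}, and in fact the computation mirrors the one in the proof of Lemma \ref{Lemma. BK-multiplication for C} applied component-wise to $X$ instead of to all of $\mathbb{C}_{\mathscr{B}}$.
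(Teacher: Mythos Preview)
Your proof is correct and follows the same route as the paper, which treats the statement as an immediate corollary of Lemma \ref{Lemma. BK-multiplication for C} with no separate argument. One minor simplification: the hypothesis is that $X$ lies \emph{in} $\mathbb{C}_{\mathscr{B}}\setminus Z_{\mathscr{B}}$, so $X\setminus Z_{\mathscr{B}}=X$ outright and the density discussion (and the appeal to Kouchnirenko--Bernstein to bound $Z_{\mathscr{B}}$) is unnecessary.
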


\begin{lem}
\label{Lemma. The set of solutions is a variety}For a tuple $\mathscr{A},$
the set of solutions $V(\Phi_{\mathscr{A}}(x))$ is a variety in $T(\mathscr{A})\times\mathbb{C}_{\mathscr{A}}.$
\end{lem}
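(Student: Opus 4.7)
The plan is to realize $V(\Phi_{\mathscr{A}}(x))$ as the common zero locus of finitely many regular functions on the ambient space $T(\mathscr{A}) \times \mathbb{C}_{\mathscr{A}}$, which will immediately show it is a closed algebraic subvariety. First I would fix natural coordinates: decompose $\mathbb{C}_{\mathscr{A}} = \bigoplus_{i=1}^{k} \mathbb{C}_{A_{i}}$, so a system $\Phi = (f_{1},\ldots,f_{k})$ corresponds to a tuple of coefficient vectors $(c_{i,a})_{a\in A_{i},\, 1\leq i\leq k}$ with $f_{i} = \sum_{a\in A_{i}} c_{i,a}\chi_{a}$. The $c_{i,a}$ are linear coordinates on $\mathbb{C}_{\mathscr{A}}$, and each character $\chi_{a}$ is an invertible Laurent monomial, hence regular on the torus $T(\mathscr{A})$ by the very construction of Section \ref{Section. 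Discriminants-of-polynomial}.

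Next, for each $i$ I would introduce the evaluation function
\[
F_{i} : T(\mathscr{A}) \times \mathbb{C}_{\mathscr{A}} \longrightarrow \mathbb{C}, \qquad F_{i}(x, \Phi) = \sum_{a \in A_{i}} c_{i,a}\,\chi_{a}(x).
\]
As a finite sum of products $c_{i,a}\cdot\chi_{a}(x)$, with the first factor regular on $\mathbb{C}_{\mathscr{A}}$ and the second regular on $T(\mathscr{A})$, the function $F_{i}$ is regular on the product. By definition, a point $(x,\Phi)$ lies in $V(\Phi_{\mathscr{A}}(x))$ if and only if $F_{1}(x,\Phi) = \cdots = F_{k}(x,\Phi) = 0$, so $V(\Phi_{\mathscr{A}}(x))$ is the vanishing locus of the ideal $(F_{1}, \ldots, F_{k})$ inside $T(\mathscr{A}) \times \mathbb{C}_{\mathscr{A}}$, i.e.\ a closed (possibly reducible) subvariety.

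I do not expect any genuine obstacle here: the content is just the bilinearity of evaluation with respect to coefficients and monomials, together with the fact that characters are regular functions on the torus. The only thing to keep in mind is that we work on a torus rather than on affine space, so the $F_{i}$ are Laurent polynomials in $x$; clearing denominators by multiplying each $F_{i}$ by a single suitable monomial yields an equivalent description by honest polynomials on any affine chart covering $T(\mathscr{A})$, but this is a cosmetic reformulation. Hence $V(\Phi_{\mathscr{A}}(x))$ is indeed a variety, as claimed.
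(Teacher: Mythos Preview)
Your argument correctly shows that $V(\Phi_{\mathscr{A}}(x))$ is the common zero locus of the regular functions $F_{1},\ldots,F_{k}$, hence a closed algebraic subset of $T(\mathscr{A})\times\mathbb{C}_{\mathscr{A}}$. However, you explicitly flag it as ``possibly reducible'', and this is where your proof falls short of what the paper needs. In this paper ``variety'' means \emph{irreducible}: the lemma is invoked in Theorem~\ref{Theorem. BK-multiplication of varieties} to conclude that $W\times Y$ is a variety, that the vector bundle $ev^{-1}(W\times Y)$ over it is a variety, and hence that its image $\mathbb{C}_{\mathscr{B}}\circ Y$ is a variety; all of this collapses if $W$ were allowed to be reducible, and the irreducibility of the strata $C(\mathscr{B}_{\alpha})$ in Corollary~\ref{Corollary. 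Subtuple from the decomposition corresponds to irreducible stratum} ultimately rests on it.

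The paper's proof obtains irreducibility by a different, more structural route: it introduces the regular map $\sigma_{\mathscr{A}}(x,c)=(c_{a}x^{a})$ and observes that $W=\sigma_{\mathscr{A}}^{-1}(\Pi)$ for the linear subspace $\Pi=\bigcap_{A}\{\sum_{a\in A}c_{a}=0\}$, then writes down an explicit inverse to show $W\cong T(\mathscr{A})\times\Pi$. This not only gives irreducibility for free but records the precise shape of $W$ as a trivial bundle. Your direct approach can be salvaged with one extra sentence: for each fixed $x\in T(\mathscr{A})$ the equations $F_{i}(x,\cdot)=0$ are nonzero linear forms in disjoint blocks of the coefficients $c_{i,a}$ (nonzero because every $\chi_{a}(x)\neq 0$ on the torus), so the fibre over $x$ is a linear subspace of constant codimension $k$; thus $W\to T(\mathscr{A})$ is a vector bundle over an irreducible base, hence irreducible. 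Either way, the missing ingredient is precisely the observation that the monomials $\chi_{a}(x)$ are \emph{units} on the torus, which forces the fibres to have constant dimension.
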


\begin{proof}
Let $\Pi=\underset{A\in\mathscr{A}}{\cap}\Pi_{A}$ be an intersection
of hyperplanes $\Pi_{A}$ in $\mathbb{C}_{\mathscr{A}}$, defined
by the equations $\underset{a\in A}{\sum}c_{a}=0$ for each set $A\in\mathscr{A}$.
Consider the map $T(\mathscr{A})\times\mathbb{C}_{\mathscr{A}}\overset{\sigma_{\mathscr{A}}}{\twoheadrightarrow}\mathbb{C}_{\mathscr{A}}$
such that $\sigma_{\mathscr{A}}(x,c)=(c_{a}x^{a})_{a\in A\in\mathscr{A}}.$
Notice that the preimage $W=\sigma_{\mathscr{A}}^{-1}(\Pi)$ is a
variety isomorphic to the product $T(\mathscr{A})\times\Pi.$ Indeed,
the isomorphism is defined by the regular maps $W\stackrel[\iota]{\sigma}{\rightleftarrows}T(\mathscr{A})\times\Pi$
such that $\iota(c_{a})=(\frac{c_{a}}{x^{a}})$ and $\sigma(c_{a})=(c_{a}x^{a})$
for every $a\in A\in\mathscr{A}$ and $x\in T(\mathscr{A})$ ($\iota$
and $\sigma$ are identity on other coordinates). Moreover, the preimage
of each hyperplane $\sigma_{\mathscr{A}}^{-1}(\Pi_{A})$ equals the
zero locus of the equation $f_{A}(x)=0$ in $T(\mathscr{A})\times\mathbb{C}_{\mathscr{A}}$.
Hence the variety $W$ equals the set of solutions $V(\Phi_{\mathscr{A}}(x))$.
\end{proof}
\begin{thm}
\label{Theorem. BK-multiplication of varieties}For a variety $Y\subset\mathbb{C}_{\mathscr{A}/\mathscr{B}},$
the BK-multiplication $\mathbb{C}_{\mathscr{B}}\circ Y$ is a variety.
\end{thm}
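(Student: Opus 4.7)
The plan is to realize $\mathbb{C}_{\mathscr{B}}\circ Y$ as the projection of a closed incidence variety, and then upgrade the resulting constructible image to a quasi-affine variety using the generic behavior of roots of a square system.

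First, I would form the incidence variety
\[
I = \bigl\{(x,\Phi,\Psi)\in T(\mathscr{B})\times\mathbb{C}_{\mathscr{B}}\times\mathbb{C}_{\mathscr{A}\setminus\mathscr{B}} : \Phi(x)=0,\ ev_{x}(\Psi)\in Y\bigr\}.
\]
By Lemma \ref{Lemma. The set of solutions is a variety}, the locus $\{\Phi(x)=0\}$ is a closed subvariety of $T(\mathscr{B})\times\mathbb{C}_{\mathscr{B}}$; and the evaluation $(x,\Psi)\mapsto ev_x(\Psi)$ is a regular morphism into $\mathbb{C}_{\mathscr{A}/\mathscr{B}}$ (linear in the coefficients $\Psi$, Laurent-monomial in $x$), so the preimage of the closed set $Y$ is closed. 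Thus $I$ is a closed subvariety of the ambient product. Pushing $I$ along the projection $\pi\colon T(\mathscr{B})\times\mathbb{C}_{\mathscr{A}}\to\mathbb{C}_{\mathscr{A}}$ that forgets $x$ yields exactly $\mathbb{C}_{\mathscr{B}}\circ Y$ by construction: a pair $(\Phi,\Psi)$ lies in $\pi(I)$ iff some $x\in V(\Phi)$ satisfies $ev_x(\Psi)\in Y$.

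Second, by Chevalley's theorem $\pi(I)=\mathbb{C}_{\mathscr{B}}\circ Y$ is constructible, and the remark preceding the theorem shows it is contained in the open locus $(\mathbb{C}_{\mathscr{B}}\setminus Z_{\mathscr{B}})\times\mathbb{C}_{\mathscr{A}\setminus\mathscr{B}}$. On the Kouchnirenko--Bernstein open set $U$ from Theorem \ref{Theorem. Reformulation of Kouchnirenko-Bernstein theorem}, where $|V(\Phi)|$ equals $\mathrm{MV}_{\overline{\langle\mathscr{B}\rangle}}(\mathscr{B})$ uniformly, the restriction of $\pi$ to $\pi^{-1}(U\times\mathbb{C}_{\mathscr{A}\setminus\mathscr{B}})$ factors through a finite \'etale cover of $U$ combined with the trivial-bundle graphs of Lemma \ref{Lemma. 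Evaluation bundle is a trivial vector bundle}, so its image is already closed inside $U\times\mathbb{C}_{\mathscr{A}\setminus\mathscr{B}}$.

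The hardest part will be controlling the image over the complement of $U$, where roots of $\Phi$ may coalesce or escape to the boundary of $T(\mathscr{B})$. To handle this I would pass to a suitable toric compactification of $T(\mathscr{B})$ so that $\pi$ becomes proper, and then verify via the trivial-bundle structure of Lemma \ref{Lemma. Evaluation bundle is a trivial vector bundle} that no boundary point produces an image point outside $\mathbb{C}_{\mathscr{B}}\circ Y$. Combining the closed description on $U\times\mathbb{C}_{\mathscr{A}\setminus\mathscr{B}}$ with the complementary closed contributions yields local closedness, completing the proof that $\mathbb{C}_{\mathscr{B}}\circ Y$ is a variety.
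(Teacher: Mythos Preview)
Your first paragraph recreates the paper's proof almost verbatim: the paper forms exactly the same incidence set, writing it as $ev^{-1}(W\times Y)$ where $W=V(\Phi_{\mathscr{B}}(x))\subset T(\mathscr{B})\times\mathbb{C}_{\mathscr{B}}$ is the universal solution variety from Lemma~\ref{Lemma. The set of solutions is a variety}, and $ev(x,\Phi_{\mathscr{B}},\Phi_{\mathscr{A}\setminus\mathscr{B}})=(x,\Phi_{\mathscr{B}},ev_x(\Phi_{\mathscr{A}\setminus\mathscr{B}}))$. The paper then observes that $ev^{-1}(W\times Y)$ is a trivial vector bundle over the irreducible $W\times Y$, hence itself irreducible, and projects to $\mathbb{C}_{\mathscr{A}}$ to obtain $\mathbb{C}_{\mathscr{B}}\circ Y$. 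That is the entire proof in the paper.

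Where you diverge is everything after that. The paper does \emph{not} invoke Chevalley, does not analyze the Kouchnirenko--Bernstein locus separately, and does not pass to a toric compactification to control boundary behavior; it simply asserts in one sentence that the projection of the irreducible variety $ev^{-1}(W\times Y)$ onto $\mathbb{C}_{\mathscr{A}}$ ``is a variety equal to the BK-multiplication $\mathbb{C}_{\mathscr{B}}\circ Y$''. In other words, the paper is content with irreducibility of the image (which is all that is used downstream, e.g.\ in Corollary~\ref{Corollary. Subtuple from the decomposition corresponds to irreducible stratum}), and does not work to establish local closedness. Your second and third paragraphs are therefore doing strictly more than the paper attempts---and your third paragraph is, as you acknowledge, only a sketch. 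If you want to match the paper's level of detail, your first paragraph already suffices; if you want a genuinely complete proof of local closedness, the compactification route you outline is reasonable but would need to be carried out in full.
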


\begin{proof}
By Lemma \ref{Lemma. The set of solutions is a variety}, the set
of solutions $V(\Phi_{\mathscr{B}}(x))$ is a variety $W$ in $T(\mathscr{B})\times\mathbb{C}_{\mathscr{B}}$
for a BK-subtuple $\mathscr{B}$. Consider the evaluation map 
\[
T(\mathscr{B})\times\mathbb{C}_{\mathscr{A}}\overset{ev}{\twoheadrightarrow}T(\mathscr{B})\times\mathbb{C}_{\mathscr{B}}\times\mathbb{C}_{\mathscr{A}/\mathscr{B}}
\]
such that $ev(x,\Phi_{\mathscr{B}},\Phi_{\mathscr{A}\backslash\mathscr{B}})=(x,\Phi_{\mathscr{B}},ev_{x}(\Phi_{\mathscr{A}\backslash\mathscr{B}}))$.
Notice that the product $W\times Y$ is a variety, and the preimage
$ev^{-1}(W\times Y)$ is a trivial vector bundle over $W\times Y$.
Indeed, for a fixed $x\in T(\mathscr{B}),$ the preimage is defined
by linear equations $\underset{a\in A_{b}}{\sum}c_{a}x^{a}=c_{b}$
for every $b\in B\in\mathscr{A}/\mathscr{B}$ (every set $A$ in $\mathscr{A}\backslash\mathscr{B}$
admits the partition $A=\underset{b\in B}{\sqcup}A_{b}$ for the corresponding
$B\in\mathscr{A}/\mathscr{B}$). Then the projection of the variety
$ev^{-1}(W\times Y)$ on the space $\mathbb{C}_{\mathscr{A}}$ is
a variety that equals the BK-multiplication $\mathbb{C}_{\mathscr{B}}\circ Y$.
\end{proof}
\begin{rem}
The closed BK-multiplication is associative: $(X\bullet Y)\bullet Z=X\bullet(Y\bullet Z)$
for $X\subseteq\mathbb{C}_{\mathscr{C}},$ $Y\subseteq\mathbb{C}_{\mathscr{B}/\mathscr{C}},$
$Z\subseteq\mathbb{C}_{\mathscr{A}/\mathscr{B}}$, and a chain of
BK-tuples $\mathscr{C}\subset\mathscr{B}\subset\mathscr{A}.$

The closed BK-multiplication is distributive in the following sense:
$X\bullet Y\cup X'\bullet Y=(X\cup X')\bullet Y$ and $X\bullet Y\cup X\bullet Y'=X\bullet(Y\cup Y')$
for algebraic sets $X,X'\subseteq\mathbb{C}_{\mathscr{B}}$, $Y,Y'\subseteq\mathbb{C}_{\mathscr{A}/\mathscr{B}}$,
and BK-tuples $\mathscr{B}\subset\mathscr{A}$. The closed BK-multiplication
is commutative $X\bullet Y=Y\bullet X$ only if the complement $\mathscr{A}\backslash\mathscr{B}$
is a BK-tuple.
\end{rem}

\begin{cor}
\label{Corollary. Coherency Relations}\textup{(Coherency Relations)}
For a chain of BK-tuples $\mathscr{C}\subset\mathscr{B}\subset\mathscr{A}$
and quasi-affine algebraic sets $Y_{\mathscr{B}/\mathscr{C}}\subseteq\mathbb{C}_{\mathscr{B}/\mathscr{C}}\backslash Z_{\mathscr{B}/\mathscr{C}}$
and $Y_{\mathscr{A}/\mathscr{B}}\subseteq\mathbb{C}_{\mathscr{A}/\mathscr{B}}$,
the equalities hold:
\begin{align*}
\mathbb{C}_{\mathscr{C}}\bullet(Y_{\mathscr{B}/\mathscr{C}}\times\mathbb{C}_{(\mathscr{A}\backslash\mathscr{B})/\mathscr{C}}) & =\mathbb{C}_{\mathscr{C}}\bullet Y_{\mathscr{B}/\mathscr{C}}\bullet\mathbb{C}_{\mathscr{A}/\mathscr{B}},\\
\mathbb{C}_{\mathscr{B}}\bullet Y_{\mathscr{A}/\mathscr{B}} & =\mathbb{C}_{\mathscr{C}}\bullet\mathbb{C}_{\mathscr{B}/\mathscr{C}}\bullet Y_{\mathscr{A}/\mathscr{B}}.
\end{align*}
\end{cor}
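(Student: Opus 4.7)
The second identity is the easier one. We invoke Lemma \ref{Lemma. BK-multiplication for C} to rewrite $\mathbb{C}_{\mathscr{B}}=\mathbb{C}_{\mathscr{C}}\bullet\mathbb{C}_{\mathscr{B}/\mathscr{C}}$, and then apply associativity of the closed BK-multiplication from the preceding remark to obtain
\[
\mathbb{C}_{\mathscr{B}}\bullet Y_{\mathscr{A}/\mathscr{B}}=(\mathbb{C}_{\mathscr{C}}\bullet\mathbb{C}_{\mathscr{B}/\mathscr{C}})\bullet Y_{\mathscr{A}/\mathscr{B}}=\mathbb{C}_{\mathscr{C}}\bullet\mathbb{C}_{\mathscr{B}/\mathscr{C}}\bullet Y_{\mathscr{A}/\mathscr{B}}.
\]

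For the first identity I would unpack the BK-multiplication fiber-by-fiber over $\mathbb{C}_{\mathscr{C}}$. Fix $\Phi_{\mathscr{C}}\in\mathbb{C}_{\mathscr{C}}\setminus Z_{\mathscr{C}}$; by Definition \ref{Definition. BK-multiplication} the fiber of $\mathbb{C}_{\mathscr{C}}\circ(Y_{\mathscr{B}/\mathscr{C}}\times\mathbb{C}_{(\mathscr{A}\backslash\mathscr{B})/\mathscr{C}})$ over $\Phi_{\mathscr{C}}$ is
\[
\bigcup_{x\in V(\Phi_{\mathscr{C}})}ev_{x}^{-1}\bigl(Y_{\mathscr{B}/\mathscr{C}}\times\mathbb{C}_{(\mathscr{A}\backslash\mathscr{B})/\mathscr{C}}\bigr).
\]
Apply Lemma \ref{Lemma. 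Fiber change} termwise: each summand equals $E^{x}_{\mathscr{B}\backslash\mathscr{C}}(Y_{\mathscr{B}/\mathscr{C}})\times\mathbb{C}_{\mathscr{A}\backslash\mathscr{B}}$. Pulling the constant factor $\mathbb{C}_{\mathscr{A}\backslash\mathscr{B}}$ outside the union and reassembling over all $\Phi_{\mathscr{C}}$, the quasi-affine set $\mathbb{C}_{\mathscr{C}}\circ(Y_{\mathscr{B}/\mathscr{C}}\times\mathbb{C}_{(\mathscr{A}\backslash\mathscr{B})/\mathscr{C}})$ is identified with $\bigl(\mathbb{C}_{\mathscr{C}}\circ Y_{\mathscr{B}/\mathscr{C}}\bigr)\times\mathbb{C}_{\mathscr{A}\backslash\mathscr{B}}$ inside $\mathbb{C}_{\mathscr{A}}=\mathbb{C}_{\mathscr{B}}\oplus\mathbb{C}_{\mathscr{A}\backslash\mathscr{B}}$. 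Taking Zariski closures (and using that closure commutes with products by a fixed affine space) gives
\[
\mathbb{C}_{\mathscr{C}}\bullet(Y_{\mathscr{B}/\mathscr{C}}\times\mathbb{C}_{(\mathscr{A}\backslash\mathscr{B})/\mathscr{C}})=\bigl(\mathbb{C}_{\mathscr{C}}\bullet Y_{\mathscr{B}/\mathscr{C}}\bigr)\times\mathbb{C}_{\mathscr{A}\backslash\mathscr{B}}.
\]

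Finally I would identify the right-hand side with $\mathbb{C}_{\mathscr{C}}\bullet Y_{\mathscr{B}/\mathscr{C}}\bullet\mathbb{C}_{\mathscr{A}/\mathscr{B}}$ via Corollary \ref{Corollay. BK-multiplicaiton for XoC}, which requires checking that the variety $X:=\mathbb{C}_{\mathscr{C}}\bullet Y_{\mathscr{B}/\mathscr{C}}\subset\mathbb{C}_{\mathscr{B}}$ is not contained in $Z_{\mathscr{B}}$; but this is immediate from the hypothesis $Y_{\mathscr{B}/\mathscr{C}}\subseteq\mathbb{C}_{\mathscr{B}/\mathscr{C}}\setminus Z_{\mathscr{B}/\mathscr{C}}$ together with Kouchnirenko--Bernstein Theorem \ref{Theorem. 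Reformulation of Kouchnirenko-Bernstein theorem} applied to a generic $\Phi_{\mathscr{C}}$ and the corresponding evaluated subsystem in $Y_{\mathscr{B}/\mathscr{C}}$, which produces a system in $\mathbb{C}_{\mathscr{B}}$ with at least one (in fact $\mathrm{MV}_{\overline{\langle\mathscr{C}\rangle}}(\mathscr{C})$ many) torus solutions.

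The main subtle point is the commutation of closure with the Cartesian-product step in the middle display, together with the non-emptiness check needed to apply Corollary \ref{Corollay. BK-multiplicaiton for XoC}; once both are granted, the fiber-change lemma does all the real work and the rest is bookkeeping of the ambient factorization $\mathbb{C}_{\mathscr{A}}=\mathbb{C}_{\mathscr{C}}\oplus\mathbb{C}_{\mathscr{B}\backslash\mathscr{C}}\oplus\mathbb{C}_{\mathscr{A}\backslash\mathscr{B}}$.
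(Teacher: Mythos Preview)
Your proof is correct and follows essentially the same approach as the paper's one-line argument (``By the associativity, Lemma \ref{Lemma. BK-multiplication for C} and Corollary \ref{Corollay. BK-multiplicaiton for XoC}''). For the second identity your argument is identical to the intended one. For the first identity there is a minor difference in packaging: the paper applies Corollary \ref{Corollay. BK-multiplicaiton for XoC} \emph{inside} the quotient, writing $Y_{\mathscr{B}/\mathscr{C}}\bullet\mathbb{C}_{\mathscr{A}/\mathscr{B}}=\overline{Y_{\mathscr{B}/\mathscr{C}}}\times\mathbb{C}_{(\mathscr{A}\backslash\mathscr{B})/\mathscr{C}}$ and then invokes associativity, whereas you first expand the outer product via Lemma \ref{Lemma. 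Fiber change} and then apply Corollary \ref{Corollay. BK-multiplicaiton for XoC} at the ambient level, which forces the extra non-emptiness check $\mathbb{C}_{\mathscr{C}}\bullet Y_{\mathscr{B}/\mathscr{C}}\not\subset Z_{\mathscr{B}}$. Both routes are equivalent (Lemma \ref{Lemma. Fiber change} is precisely what underlies associativity), but the paper's ordering avoids that last verification since the hypothesis $Y_{\mathscr{B}/\mathscr{C}}\subseteq\mathbb{C}_{\mathscr{B}/\mathscr{C}}\setminus Z_{\mathscr{B}/\mathscr{C}}$ is exactly what Corollary \ref{Corollay. BK-multiplicaiton for XoC} needs when applied at the quotient level.
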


\begin{proof}
By associativity, Lemma \ref{Lemma. BK-multiplication for C} and
Corollary \ref{Corollay. BK-multiplicaiton for XoC}.
\end{proof}

\section{\label{Section. Discriminants for BK-tuples}$\mathscr{A}$-discriminants}
\begin{lem}
\label{Lemma. Linear Algebra}In a vector space $V,$ consider a linearly
dependent set of vectors $\{\upsilon_{i}\}_{i\in I}$ and a linearly
independent subset $\{\upsilon_{j}\}_{j\in J}$ , $J\subset I.$ Then
the set $\{\pi(\upsilon_{k})\}_{k\in I\backslash J}$ is linearly
dependent in $U$ for the projection $\pi:V\rightarrow U=V/\overline{\langle\upsilon_{j}\rangle_{j\in J}}$. 
\end{lem}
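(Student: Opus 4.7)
The plan is to exploit the hypothesized non-trivial linear dependence among $\{v_i\}_{i \in I}$ and then push it through $\pi$, after observing that the coefficients on the $J$-part cannot swallow the entire dependence.

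More precisely, first I would write out the given linear dependence as $\sum_{i \in I} c_i v_i = 0$ with coefficients $c_i$ not all zero. The key preliminary observation is that at least one of the nonzero coefficients must be indexed by $I\setminus J$. Indeed, if every $c_k$ with $k \in I\setminus J$ vanished, then the relation would reduce to $\sum_{j \in J} c_j v_j = 0$ with not all $c_j$ zero, contradicting the linear independence of $\{v_j\}_{j \in J}$. In particular, this also guarantees $I\setminus J\neq\emptyset$, so the statement is non-vacuous.

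Next I would apply the projection $\pi : V \to U = V/\overline{\langle v_j\rangle_{j\in J}}$ to the relation. Since each $v_j$ for $j \in J$ lies in the subspace being quotiented out, $\pi(v_j)=0$, so the relation collapses to
\[
\sum_{k \in I\setminus J} c_k\, \pi(v_k) = 0,
\]
and by the preceding step at least one $c_k$ in this sum is nonzero. This exhibits a non-trivial linear dependence among $\{\pi(v_k)\}_{k \in I\setminus J}$, which is exactly the conclusion.

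There is no real obstacle; the only minor subtlety is interpreting $\overline{\langle v_j\rangle_{j \in J}}$ (a notation imported from the sublattice setting of Section \ref{Section. Combinatorial Review}), which in a vector space coincides with the ordinary linear span $\langle v_j\rangle_{j \in J}$, so the quotient map $\pi$ behaves as expected.
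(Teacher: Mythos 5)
Your proof is correct and follows essentially the same route as the paper: take a nontrivial dependence relation, apply $\pi$, and note that the $J$-indexed terms die in the quotient. Your explicit check that some nonzero coefficient is indexed by $I\setminus J$ (so the surviving relation is genuinely nontrivial) is a small point the paper's proof leaves implicit, and it is worth including.
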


\begin{proof}
If we denote by $W$ the kernel of $\pi$, then the vector space $V$
is isomorphic to the direct sum $V\cong U\oplus W.$ Since the set
$\{\upsilon_{j}\}_{j\in J}$ is linearly independent, there exist
constants $c_{i}$ such that $\underset{i\in I}{\sum}c_{i}\upsilon_{i}=0$.
From the isomorphism, we can decompose each vector $\upsilon_{i}=u_{i}+w_{i}.$
Notice that the vectors $\upsilon_{j}$ lie in the subspace $W$ for
$j\in J$, and $u_{j}=0$. Therefore, the projected subset is linearly
dependent: $\underset{k\in I\backslash J}{\sum}c_{k}u_{k}=0$, $u_{k}=\pi(\upsilon_{k}).$
\end{proof}
\begin{thm}
\label{Theorem. Discriminatn's splitting}For BK-tuples $\mathscr{B}\subset\mathscr{A}$,
the discriminant $D_{\mathscr{A}}$ equals the union 
\[
D_{\mathscr{A}}=D_{\mathscr{B}}\bullet\mathbb{C}_{\mathscr{A}/\mathscr{B}}\cup\mathbb{C}_{\mathscr{B}}\bullet D_{\mathscr{A}/\mathscr{B}}.
\]
\end{thm}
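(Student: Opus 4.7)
My plan is to prove the two set-theoretic inclusions separately, using the splitting $M\cong\overline{\langle\mathscr{B}\rangle}\oplus M/\overline{\langle\mathscr{B}\rangle}$ of dual lattices induced by the BK-subtuple, which gives a product decomposition $T(N)\cong T(\mathscr{A}/\mathscr{B})\times T(\mathscr{B})$ with coordinates $\tilde{x}=(y,x)$. The key calculation I rely on throughout is that at any $\tilde{x}=(y_0,x_0)$, the differential $df_j$ for $j\in\mathscr{B}$ lies entirely in $\overline{\langle\mathscr{B}\rangle}\otimes\mathbb{C}$ and depends only on $x_0$, while the image of $df_i$ for $i\in\mathscr{A}\setminus\mathscr{B}$ under the projection $\pi\colon M\otimes\mathbb{C}\to(M/\overline{\langle\mathscr{B}\rangle})\otimes\mathbb{C}$ equals precisely the differential of $ev_{x_0}(f_i)$ at $y_0$; this identification translates dependence conditions between the full torus and the factor tori.

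For the inclusion $D_{\mathscr{A}}\subseteq D_{\mathscr{B}}\bullet\mathbb{C}_{\mathscr{A}/\mathscr{B}}+\mathbb{C}_{\mathscr{B}}\bullet D_{\mathscr{A}/\mathscr{B}}$, I work on the dense subset of $D_{\mathscr{A}}$ with an honest degenerate root $\tilde{x}=(y_0,x_0)$, and case-split on whether $\{df_j\}_{j\in\mathscr{B}}$ is linearly dependent at $\tilde{x}$. If so, then $x_0$ is a degenerate root of $\Phi_{\mathscr{B}}$ in $T(\mathscr{B})$, so $\Phi_{\mathscr{B}}\in D_{\mathscr{B}}$ and Corollary \ref{Corollay. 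BK-multiplicaiton for XoC} places $\Phi$ in $D_{\mathscr{B}}\times\mathbb{C}_{\mathscr{A}\setminus\mathscr{B}}=D_{\mathscr{B}}\bullet\mathbb{C}_{\mathscr{A}/\mathscr{B}}$. Otherwise, Lemma \ref{Lemma. Linear Algebra} applied with $V=M\otimes\mathbb{C}$ and $J=\mathscr{B}$ forces $\{\pi(df_i)\}_{i\in\mathscr{A}\setminus\mathscr{B}}$ to be dependent, which by the key calculation means $y_0$ is a degenerate root of $ev_{x_0}(\Phi_{\mathscr{A}\setminus\mathscr{B}})$; hence $\Phi\in\mathbb{C}_{\mathscr{B}}\circ D_{\mathscr{A}/\mathscr{B}}\subseteq\mathbb{C}_{\mathscr{B}}\bullet D_{\mathscr{A}/\mathscr{B}}$. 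Taking closures finishes this direction since the right-hand side is closed.

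For the reverse inclusion I treat each summand separately. For $\mathbb{C}_{\mathscr{B}}\bullet D_{\mathscr{A}/\mathscr{B}}$, a dense set of systems $\Phi$ comes by construction of BK-multiplication with a root $x_0$ of $\Phi_{\mathscr{B}}$ and a degenerate root $y_0$ of $ev_{x_0}(\Phi_{\mathscr{A}\setminus\mathscr{B}})$; then $\tilde{x}=(y_0,x_0)$ is a root of $\Phi$ with dependent projected differentials, and a short linear-algebra manipulation (splitting on whether $\{df_j\}_{j\in\mathscr{B}}$ spans $\overline{\langle\mathscr{B}\rangle}\otimes\mathbb{C}$) upgrades this to dependence of the full set $\{df_i\}_{i\in\mathscr{A}}$, making $\tilde{x}$ a degenerate root. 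For $D_{\mathscr{B}}\bullet\mathbb{C}_{\mathscr{A}/\mathscr{B}}=D_{\mathscr{B}}\times\mathbb{C}_{\mathscr{A}\setminus\mathscr{B}}$, I take a generic $\Phi$ with $\Phi_{\mathscr{B}}$ admitting an honest degenerate root $x_0$ and invoke Theorem \ref{Theorem. Reformulation of Kouchnirenko-Bernstein theorem} applied to the BK-quotient $\mathscr{A}/\mathscr{B}$ to produce a root $y_0\in T(\mathscr{A}/\mathscr{B})$ of $ev_{x_0}(\Phi_{\mathscr{A}\setminus\mathscr{B}})$; the lift $\tilde{x}=(y_0,x_0)$ is a root of $\Phi$ inheriting the dependence of $\{df_j\}_{j\in\mathscr{B}}$, hence a degenerate root.

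The main obstacle I anticipate is the genericity argument for the $D_{\mathscr{B}}\bullet\mathbb{C}_{\mathscr{A}/\mathscr{B}}\subseteq D_{\mathscr{A}}$ step: one must verify that the locus in $D_{\mathscr{B}}\times\mathbb{C}_{\mathscr{A}\setminus\mathscr{B}}$ where $\Phi_{\mathscr{B}}$ has an honest degenerate root \emph{and} $ev_{x_0}(\Phi_{\mathscr{A}\setminus\mathscr{B}})$ has a root in $T(\mathscr{A}/\mathscr{B})$ is Zariski dense. This should reduce to first restricting to the open stratum of $D_{\mathscr{B}}$ admitting honest degenerate roots, and then applying Bernstein's theorem to the evaluated system for a generic choice of $\Phi_{\mathscr{A}\setminus\mathscr{B}}$; the delicate point is checking that these two genericity conditions remain simultaneously satisfiable after passing to the relevant fibered structure.
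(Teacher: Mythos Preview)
Your approach is correct and coincides with the paper's: the same case split on dependence of $\{df_j\}_{j\in\mathscr{B}}$ via Lemma~\ref{Lemma. Linear Algebra} handles the forward inclusion, and the paper in fact leaves the reverse inclusion entirely implicit, so you are being more thorough rather than less. Your flagged genericity concern resolves immediately since $ev_{x_0}\colon\mathbb{C}_{\mathscr{A}\setminus\mathscr{B}}\to\mathbb{C}_{\mathscr{A}/\mathscr{B}}$ is a linear surjection (Lemma~\ref{Lemma. Evaluation bundle is a trivial vector bundle}), so the Zariski-dense locus of BK-quotient systems with a torus root pulls back to a Zariski-dense set in each fibre over the honest-degenerate-root stratum of $D_{\mathscr{B}}$.
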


\begin{proof}
For a generic system $\Phi\in D_{\mathscr{A}}$, there exists a singular
point $(x,y)\in T(\mathscr{A})$ such that vectors $\{df_{A}(x,y)\}_{A\in\mathscr{A}}$
are linearly dependent, and $x\in T(\mathscr{B})$. If the set $\{df_{B}(x)\}_{B\in\mathscr{B}}$
is linearly dependent, then the point $\Phi$ belongs to $D_{\mathscr{B}}\bullet\mathbb{C}_{\mathscr{A}/\mathscr{B}}.$
Otherwise, the set $\{d(ev_{x}f_{C})(y)\}_{C\in\mathscr{A}/\mathscr{B}}$
is linearly dependent by Lemma \ref{Lemma. Linear Algebra}, and the
substitution of the root $x$ into the system $\Phi_{\mathscr{A}\backslash\mathscr{B}}$
gives us the singular system $ev_{x}\Phi_{\mathscr{A}\backslash\mathscr{B}}$
from the discriminant $D_{\mathscr{A}/\mathscr{B}}.$ Then the system
$\Phi$ lies in $\mathbb{C}_{\mathscr{B}}\bullet D_{\mathscr{A}/\mathscr{B}}$.
\end{proof}
The theorem above splits the discriminant $D_{\mathscr{A}}$ into
two parts: $D_{\mathscr{B}}\bullet\mathbb{C}_{\mathscr{A}/\mathscr{B}}$
and $\mathbb{C}_{\mathscr{B}}\bullet D_{\mathscr{A}/\mathscr{B}}$.
Each part can consist of a collection of components. However, the
theorem does not describe inclusions between the parts. The theorem
shows the possibility of having more than one component. Nevertheless,
the result is significant since, with every BK-subtuple $\mathscr{B}$,
we can associate some part $C(\mathscr{B})$ in the discriminant $D_{\mathscr{A}}$
by a sequence of splittings and usage of coherency relations for BK-multiplications.
An irreducible part is called a \textit{stratum}.
\begin{prop}
\label{Proposition. The second strata doesn't lie in the first}The
part $\mathbb{C}_{\mathscr{B}}\bullet D_{\mathscr{A}/\mathscr{B}}$
does not lie in the other $D_{\mathscr{B}}\bullet\mathbb{C}_{\mathscr{A}/\mathscr{B}}$.
\end{prop}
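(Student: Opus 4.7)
The plan is to exhibit an explicit polynomial system lying in the second part but not in the first, detected by the natural coordinate projection $\pi:\mathbb{C}_{\mathscr{A}}=\mathbb{C}_{\mathscr{B}}\oplus\mathbb{C}_{\mathscr{A}\setminus\mathscr{B}}\to\mathbb{C}_{\mathscr{B}}$. The two parts project onto very different subsets of $\mathbb{C}_{\mathscr{B}}$: the first lands inside $D_{\mathscr{B}}$, while the second covers a dense open subset.

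First, I would apply Corollary \ref{Corollay. BK-multiplicaiton for XoC} to $X=D_{\mathscr{B}}$ to identify
\[
D_{\mathscr{B}}\bullet\mathbb{C}_{\mathscr{A}/\mathscr{B}}=D_{\mathscr{B}}\times\mathbb{C}_{\mathscr{A}\setminus\mathscr{B}}.
\]
To legitimately invoke the corollary I need $D_{\mathscr{B}}\not\subseteq Z_{\mathscr{B}}$, which is automatic from the very definition of the discriminant: any system whose degenerate root gave rise to an element of $D_{\mathscr{B}}$ before taking closure has a root in the torus, so $D_{\mathscr{B}}\setminus Z_{\mathscr{B}}$ is already dense in $D_{\mathscr{B}}$. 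In particular, $\pi(D_{\mathscr{B}}\bullet\mathbb{C}_{\mathscr{A}/\mathscr{B}})\subseteq D_{\mathscr{B}}$, and the latter is a \emph{proper} closed subvariety of $\mathbb{C}_{\mathscr{B}}$ because generic systems in $\mathbb{C}_{\mathscr{B}}$ have $\mathrm{MV}_{\overline{\langle\mathscr{B}\rangle}}(\mathscr{B})$ nondegenerate roots by Theorem \ref{Theorem. Reformulation of Kouchnirenko-Bernstein theorem}.

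Next, I would produce a witness in $\mathbb{C}_{\mathscr{B}}\circ D_{\mathscr{A}/\mathscr{B}}$ projecting outside $D_{\mathscr{B}}$. Choose $\Phi_{\mathscr{B}}$ in the nonempty open set $\mathbb{C}_{\mathscr{B}}\setminus(D_{\mathscr{B}}\cup Z_{\mathscr{B}})$, select any root $x\in V(\Phi_{\mathscr{B}})$, and pick any $\Psi\in D_{\mathscr{A}/\mathscr{B}}$ (assumed nonempty, since otherwise the second part is already empty and the claim is vacuous). By Lemma \ref{Lemma. Evaluation bundle is a trivial vector bundle}, the linear map $ev_{x}:\mathbb{C}_{\mathscr{A}\setminus\mathscr{B}}\to\mathbb{C}_{\mathscr{A}/\mathscr{B}}$ is surjective, so there exists $\Phi_{\mathscr{A}\setminus\mathscr{B}}\in ev_{x}^{-1}(\Psi)$. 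By the definition of BK-multiplication, the pair $(\Phi_{\mathscr{B}},\Phi_{\mathscr{A}\setminus\mathscr{B}})$ lies in $\mathbb{C}_{\mathscr{B}}\circ D_{\mathscr{A}/\mathscr{B}}\subseteq\mathbb{C}_{\mathscr{B}}\bullet D_{\mathscr{A}/\mathscr{B}}$, yet its $\pi$-image is $\Phi_{\mathscr{B}}\notin D_{\mathscr{B}}$, so it escapes $D_{\mathscr{B}}\times\mathbb{C}_{\mathscr{A}\setminus\mathscr{B}}=D_{\mathscr{B}}\bullet\mathbb{C}_{\mathscr{A}/\mathscr{B}}$.

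The main obstacle is essentially bookkeeping about emptiness: verifying that Corollary \ref{Corollay. BK-multiplicaiton for XoC} genuinely applies to $D_{\mathscr{B}}$, and that the open set $\mathbb{C}_{\mathscr{B}}\setminus(D_{\mathscr{B}}\cup Z_{\mathscr{B}})$ is nonempty. Both follow from the BK-hypothesis via Kouchnirenko--Bernstein, as explained above; the only genuine nontrivial assumption is $D_{\mathscr{A}/\mathscr{B}}\neq\emptyset$, which is tacit in the statement since the proposition is meaningful only when the second stratum actually contributes to the decomposition of Theorem \ref{Theorem. Discriminatn's splitting}.
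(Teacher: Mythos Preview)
Your argument is correct and follows essentially the same route as the paper: project onto $\mathbb{C}_{\mathscr{B}}$, observe that $D_{\mathscr{B}}\bullet\mathbb{C}_{\mathscr{A}/\mathscr{B}}$ sits over $D_{\mathscr{B}}$, and exhibit elements of $\mathbb{C}_{\mathscr{B}}\circ D_{\mathscr{A}/\mathscr{B}}$ sitting over the complement. The paper compresses this into one line, using the bifurcation divisor $B_{\mathscr{B}}\supseteq D_{\mathscr{B}}\cup Z_{\mathscr{B}}$ and simply noting that $(\mathbb{C}_{\mathscr{B}}\setminus B_{\mathscr{B}})\circ D_{\mathscr{A}/\mathscr{B}}$ is dense in the second part and disjoint from the first.
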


\begin{proof}
The part $\mathbb{C}_{\mathscr{B}}\bullet D_{\mathscr{A}/\mathscr{B}}$
contains the dense subset $(\mathbb{C}_{\mathscr{B}}\backslash B_{\mathscr{B}})\circ D_{\mathscr{A}/\mathscr{B}}$
that does not lie in the closed BK-multiplication $D_{\mathscr{B}}\bullet\mathbb{C}_{\mathscr{A}/\mathscr{B}}.$
\end{proof}
\begin{cor}
\label{Corollary. Component of a poset}Suppose the poset $\mathtt{P}_{\mathscr{A}}$
(see Corollary \ref{Corollary. Unique decomposition of a BK-tuple})
has a few components, and denote by $\mathtt{I}$ one of them; then
the parts $C(\mathscr{B}_{\mathtt{I}})$ and $C(\mathscr{A}\backslash\mathscr{B}_{\mathtt{I}})$
are not contained in each other.
\end{cor}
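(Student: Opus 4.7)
The plan is to leverage the product structure that a poset component induces on the BK-multiplication, reducing the statement to a density argument of the same flavor as Proposition \ref{Proposition. The second strata doesn't lie in the first}. Since $I$ is a connected component of $P_{\mathscr{A}}$, both $I$ and its complement $\hat I$ are simultaneously order ideals and order filters. Thus both $\mathscr{B}_I$ and $\mathscr{B}_{\hat I}:=\mathscr{A}\backslash\mathscr{B}_I$ are BK-subtuples (being unions of the irreducible pieces $\hat{\mathscr{B}}_{\alpha}$ from Corollary \ref{Corollary. Unique decomposition of a reducible BK with MV(A)>0}), and by part (1) of the cited proposition from \cite{pokidkin_sublattice_nodate} their saturated linear spans meet only at the origin. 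Because $\mathscr{A}$ itself is BK, this forces $M=\overline{\langle\mathscr{B}_I\rangle}\oplus\overline{\langle\mathscr{B}_{\hat I}\rangle}$, so the quotient $\mathscr{A}/\mathscr{B}_I$ is canonically identified with $\mathscr{B}_{\hat I}$, and symmetrically for $\mathscr{A}/\mathscr{B}_{\hat I}$.

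Under this identification, every evaluation map appearing in $\mathbb{C}_{\mathscr{B}_{\hat I}}\to\mathbb{C}_{\mathscr{A}/\mathscr{B}_I}$ is the identity, so Corollary \ref{Corollay. BK-multiplicaiton for XoC} turns the relevant BK-multiplications into genuine Cartesian products:
\[
D_{\mathscr{B}_I}\bullet\mathbb{C}_{\mathscr{A}/\mathscr{B}_I}=D_{\mathscr{B}_I}\times\mathbb{C}_{\mathscr{B}_{\hat I}},\qquad D_{\mathscr{B}_{\hat I}}\bullet\mathbb{C}_{\mathscr{A}/\mathscr{B}_{\hat I}}=\mathbb{C}_{\mathscr{B}_I}\times D_{\mathscr{B}_{\hat I}}.
\]
Unwinding the iterative splitting of Theorem \ref{Theorem. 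Discriminatn's splitting} along any refinement of the filtration $\emptyset\subset\mathscr{B}_I\subset\mathscr{A}$ by means of the coherency relations of Corollary \ref{Corollary. Coherency Relations}, one checks that the stratum $C(\mathscr{B}_I)$ takes the form $X_I\times\mathbb{C}_{\mathscr{B}_{\hat I}}$ for a nonempty $X_I\subseteq D_{\mathscr{B}_I}$, and symmetrically $C(\mathscr{B}_{\hat I})=\mathbb{C}_{\mathscr{B}_I}\times X_{\hat I}$.

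With this product form in hand, non-containment is immediate: a generic point $(\Phi_I,\Phi_{\hat I})$ of $X_I\times\mathbb{C}_{\mathscr{B}_{\hat I}}$ has $\Phi_{\hat I}$ generic in $\mathbb{C}_{\mathscr{B}_{\hat I}}$, hence in particular outside the proper subvariety $X_{\hat I}$, so $C(\mathscr{B}_I)\not\subseteq C(\mathscr{B}_{\hat I})$; the reverse non-containment follows by swapping the roles of $I$ and $\hat I$. This is essentially the same density argument as in Proposition \ref{Proposition. The second strata doesn't lie in the first}, now transported across the product decomposition induced by the split of the poset.

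The main technical obstacle I expect is not the final density argument, but the bookkeeping to verify that $C(\mathscr{B}_I)$ really does have the claimed product form. The text defines $C(\mathscr{B})$ only implicitly, via a sequence of splittings and applications of the coherency relations, so I must make sure these choices can be arranged compatibly with the direct-summand decomposition furnished by the component $I$, and that the Cartesian collapse of BK-multiplications by $\mathbb{C}$-factors on the $\hat I$ side persists throughout the iteration. Once this factorization is pinned down, the conclusion is the straightforward genericity observation above.
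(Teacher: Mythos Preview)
Your proposal is correct and rests on the same idea as the paper: since $I$ is a connected component, both $I$ and $P_{\mathscr{A}}\setminus I$ are order ideals, so the density argument of Proposition~\ref{Proposition. The second strata doesn't lie in the first} applies in both directions. The paper's proof is literally that two-line invocation---apply Proposition~\ref{Proposition. The second strata doesn't lie in the first} once with $\mathscr{B}=\mathscr{B}_I$ and once with $\mathscr{B}=\mathscr{B}_{\hat I}$---so the Cartesian-product collapse and the bookkeeping you flag as the ``main technical obstacle'' are not actually needed: the single splitting of Theorem~\ref{Theorem. Discriminatn's splitting} already hands you $C(\mathscr{B}_I)=D_{\mathscr{B}_I}\bullet\mathbb{C}_{\mathscr{A}/\mathscr{B}_I}$ and $C(\mathscr{A}\backslash\mathscr{B}_I)=\mathbb{C}_{\mathscr{B}_I}\bullet D_{\mathscr{A}/\mathscr{B}_I}$ directly, with no iteration to track.
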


\begin{proof}
Notice that $\mathtt{I}$ and $\mathtt{P}_{\mathscr{A}}\backslash\mathtt{I}$
are order ideals corresponding to BK-subtuples and use Proposition
\ref{Proposition. The second strata doesn't lie in the first} twice.
\end{proof}
\begin{lem}
\textup{(Separation Lemma)} \label{Lemma. Separation Lemma}For BK-tuples
$\mathscr{C}\subset\mathscr{B}\subset\mathscr{A}$, the corresponding
part $C(\mathscr{B}\backslash\mathscr{C})$ in the discriminant $D_{\mathscr{A}}$
is $\mathbb{C}_{\mathscr{C}}\bullet D_{\mathscr{B}/\mathscr{C}}\bullet\mathbb{C}_{\mathscr{A}/\mathscr{B}}.$
\end{lem}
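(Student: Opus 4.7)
The plan is to unfold the definition of $C(\mathscr{B}\setminus\mathscr{C})$ by applying Theorem \ref{Theorem. Discriminatn's splitting} twice, in the order dictated by the chain $\mathscr{C} \subset \mathscr{B} \subset \mathscr{A}$, and then to clean up the resulting expression using the Coherency Relations (Corollary \ref{Corollary. Coherency Relations}) and the quotient identity from item 4 of the Proposition characterizing reducible BK-tuples.

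First, I would apply the splitting theorem to the pair $\mathscr{C} \subset \mathscr{A}$, obtaining
\[
D_{\mathscr{A}} \;=\; D_{\mathscr{C}} \bullet \mathbb{C}_{\mathscr{A}/\mathscr{C}} \;+\; \mathbb{C}_{\mathscr{C}} \bullet D_{\mathscr{A}/\mathscr{C}}.
\]
By the very meaning of the notation $C(\mathscr{B}\setminus\mathscr{C})$ introduced in the paragraph before the lemma (the stratum obtained by first \emph{not} taking the singularity inside $\mathscr{C}$), the relevant summand to pursue is the second one, $\mathbb{C}_{\mathscr{C}} \bullet D_{\mathscr{A}/\mathscr{C}}$. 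Next, I would split $D_{\mathscr{A}/\mathscr{C}}$ once more, this time using the BK-subtuple $\mathscr{B}/\mathscr{C} \subset \mathscr{A}/\mathscr{C}$ (which is a legitimate BK-subtuple by items 2 and 3 of that same Proposition), yielding
\[
D_{\mathscr{A}/\mathscr{C}} \;=\; D_{\mathscr{B}/\mathscr{C}} \bullet \mathbb{C}_{(\mathscr{A}/\mathscr{C})/(\mathscr{B}/\mathscr{C})} \;+\; \mathbb{C}_{\mathscr{B}/\mathscr{C}} \bullet D_{(\mathscr{A}/\mathscr{C})/(\mathscr{B}/\mathscr{C})}.
\]
Now the stratum corresponding to $\mathscr{B}\setminus\mathscr{C}$ is the one in which the singularity \emph{does} lie on $\mathscr{B}/\mathscr{C}$, i.e.\ the first summand.

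Then I would invoke item 4 of the Proposition, giving the canonical isomorphism $(\mathscr{A}/\mathscr{C})/(\mathscr{B}/\mathscr{C}) \cong \mathscr{A}/\mathscr{B}$, which by the pullback-invariance of discriminants discussed in Section \ref{Section. Discriminants-of-polynomial} identifies $\mathbb{C}_{(\mathscr{A}/\mathscr{C})/(\mathscr{B}/\mathscr{C})}$ with $\mathbb{C}_{\mathscr{A}/\mathscr{B}}$. Hence
\[
C(\mathscr{B}\setminus\mathscr{C}) \;=\; \mathbb{C}_{\mathscr{C}} \bullet \bigl( D_{\mathscr{B}/\mathscr{C}} \bullet \mathbb{C}_{\mathscr{A}/\mathscr{B}} \bigr).
\]
Finally, I would appeal to the associativity of the closed BK-multiplication (noted in the remark following Theorem \ref{Theorem. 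BK-multiplication of varieties}), or equivalently to the Coherency Relations applied with $Y_{\mathscr{B}/\mathscr{C}} = D_{\mathscr{B}/\mathscr{C}}$, to drop the parentheses and conclude that this expression equals $\mathbb{C}_{\mathscr{C}} \bullet D_{\mathscr{B}/\mathscr{C}} \bullet \mathbb{C}_{\mathscr{A}/\mathscr{B}}$, which is the claimed form.

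The main obstacle I anticipate is a bookkeeping one rather than a substantive one: one must verify that the Coherency Relations apply, which requires checking that $D_{\mathscr{B}/\mathscr{C}}$ lies in $\mathbb{C}_{\mathscr{B}/\mathscr{C}} \setminus Z_{\mathscr{B}/\mathscr{C}}$ (i.e.\ that a generic discriminantal system on the quotient still has a nonempty solution set in $T(\mathscr{B}/\mathscr{C})$); this is where one must use that $\mathscr{B}/\mathscr{C}$ is a BK-tuple so that the Kouchnirenko-Bernstein theorem provides a positive generic root count, ensuring the required non-vanishing. Once this is checked, the rest of the argument is a straightforward chain of substitutions.
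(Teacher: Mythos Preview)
Your proposal is correct and follows essentially the same approach as the paper's proof: both apply Theorem~\ref{Theorem. Discriminatn's splitting} first to $\mathscr{C}\subset\mathscr{A}$ and then to $\mathscr{B}/\mathscr{C}\subset\mathscr{A}/\mathscr{C}$, invoke the quotient identity $(\mathscr{A}/\mathscr{C})/(\mathscr{B}/\mathscr{C})\cong\mathscr{A}/\mathscr{B}$, and simplify via the Coherency Relations (Corollary~\ref{Corollary. Coherency Relations}). Your write-up is in fact more careful than the paper's terse computation, since you explicitly flag the hypothesis $Y_{\mathscr{B}/\mathscr{C}}\subseteq\mathbb{C}_{\mathscr{B}/\mathscr{C}}\setminus Z_{\mathscr{B}/\mathscr{C}}$ needed for the Coherency Relations and explain why it holds (a generic point of $D_{\mathscr{B}/\mathscr{C}}$ has a degenerate root, hence a root).
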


\begin{proof}
Direct computations with use of Theorem \ref{Theorem. Discriminatn's splitting}
and Corollary \ref{Corollary. Coherency Relations}:
\begin{align*}
D_{\mathscr{A}} & =D_{\mathscr{C}}\bullet\mathbb{C}_{\mathscr{A}/\mathscr{C}}\cup\mathbb{C}_{\mathscr{C}}\bullet(D_{\mathscr{B}/\mathscr{C}}\bullet\mathbb{C}_{\frac{\mathscr{A}/\mathscr{C}}{\mathscr{B}/\mathscr{C}}}\cup\mathbb{C}_{\mathscr{B}/\mathscr{C}}\bullet D_{\frac{\mathscr{A}/\mathscr{C}}{\mathscr{B}/\mathscr{C}}})=\\
 & =D_{\mathscr{C}}\bullet\mathbb{C}_{\mathscr{A}/\mathscr{C}}\cup\mathbb{C}_{\mathscr{C}}\bullet D_{\mathscr{B}/\mathscr{C}}\bullet\mathbb{C}_{\mathscr{A}/\mathscr{B}}\cup\mathbb{C}_{\mathscr{B}}\bullet D_{\mathscr{A}/\mathscr{B}}.
\end{align*}
\end{proof}
\begin{cor}
\label{Corollary. Subtuple from the decomposition corresponds to irreducible stratum}Every
element of the poset $\mathtt{P}_{\mathscr{A}}$ corresponds to some
stratum of $D_{\mathscr{A}}.$
\end{cor}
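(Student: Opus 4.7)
The plan is to apply the Separation Lemma \ref{Lemma. Separation Lemma} to the chain $\mathscr{C} := \mathscr{B}_{(\alpha)\backslash\alpha} \subset \mathscr{B} := \mathscr{B}_{(\alpha)} \subset \mathscr{A}$ associated to an element $\alpha \in P_{\mathscr{A}}$ in the poset decomposition of Corollary \ref{Corollary. Unique decomposition of a reducible BK with MV(A)>0}. By the very definition of that decomposition, the quotient $\hat{\mathscr{B}}_{\alpha} = \mathscr{B}/\mathscr{C}$ is an irreducible BK-tuple, so Theorem \ref{Theorem. Irreducible discriminants} ensures $D_{\hat{\mathscr{B}}_{\alpha}}$ is an irreducible variety. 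The Separation Lemma then isolates the candidate stratum
\[
C(\mathscr{B}_{\alpha}) = \mathbb{C}_{\mathscr{C}} \bullet D_{\hat{\mathscr{B}}_{\alpha}} \bullet \mathbb{C}_{\mathscr{A}/\mathscr{B}}
\]
as a part of $D_{\mathscr{A}}$, and what remains is to check that this part is itself irreducible, so that it qualifies as a stratum in the sense of the paper.

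I would handle the two BK-multiplications in turn. For the outer right-hand factor, Corollary \ref{Corollay. BK-multiplicaiton for XoC} applies, because a generic element of $D_{\hat{\mathscr{B}}_{\alpha}}$ has a nonempty solution set in $T(\hat{\mathscr{B}}_{\alpha})$; thus $D_{\hat{\mathscr{B}}_{\alpha}} \bullet \mathbb{C}_{\mathscr{A}/\mathscr{B}}$ is the closure of $D_{\hat{\mathscr{B}}_{\alpha}} \times \mathbb{C}_{(\mathscr{A}\backslash\mathscr{B})/\mathscr{C}}$, the product of an irreducible variety with an affine space, manifestly irreducible. For the remaining multiplication $\mathbb{C}_{\mathscr{C}} \bullet Y$ with $Y$ irreducible, I would reprise the construction from the proof of Theorem \ref{Theorem. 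BK-multiplication of varieties}: the universal solution variety $W \subset T(\mathscr{C}) \times \mathbb{C}_{\mathscr{C}}$ of Lemma \ref{Lemma. The set of solutions is a variety} is irreducible, since the lemma's proof exhibits an isomorphism $W \cong T(\mathscr{C}) \times \Pi$; the preimage $ev^{-1}(W \times Y)$ is a trivial vector bundle over the irreducible base $W \times Y$ and hence irreducible; and the BK-multiplication $\mathbb{C}_{\mathscr{C}} \bullet Y$ is by construction the projection of this bundle to $\mathbb{C}_{\mathscr{A}}$.

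The step I expect to be the most delicate is the last projection: its fibers over $\Phi_{\mathscr{C}} \in \mathbb{C}_{\mathscr{C}}$ are disjoint unions of $\mathrm{MV}(\mathscr{C})$ trivial evaluation bundles over $Y$, one per root of $\Phi_{\mathscr{C}}$, which could at first sight suggest the image splits. The clean resolution is that the different root branches are not separate components of the source $ev^{-1}(W \times Y)$ but are swept out continuously by the single irreducible base $W$, so the image is the morphic image of one irreducible variety and remains irreducible. Combined with the associativity of $\bullet$ and the coherency relations from Corollary \ref{Corollary. Coherency Relations}, this argument shows that $C(\mathscr{B}_{\alpha})$ is an irreducible part of $D_{\mathscr{A}}$, i.e. the stratum associated to $\alpha$, as claimed.
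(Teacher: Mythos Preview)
Your proof is correct and follows essentially the same route as the paper: apply the Separation Lemma to the chain $\mathscr{B}_{(\alpha)\backslash\alpha}\subset\mathscr{B}_{(\alpha)}\subset\mathscr{A}$, invoke Theorem~\ref{Theorem. Irreducible discriminants} for the irreducibility of $D_{\hat{\mathscr{B}}_{\alpha}}$, and then argue that the BK-multiplication preserves irreducibility. The paper simply cites Theorem~\ref{Theorem. BK-multiplication of varieties} for this last step, whereas you unpack its proof (via Lemma~\ref{Lemma. The set of solutions is a variety} and the trivial-bundle description) and handle the right factor separately through Corollary~\ref{Corollay. BK-multiplicaiton for XoC}; this is more explicit but not a different argument.
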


\begin{proof}
Corollary \ref{Corollary. Unique decomposition of a BK-tuple} provides
the unique decomposition of the tuple $\mathscr{A}$, encoded by a
poset $\mathtt{P}_{\mathscr{A}}$. Every element $\alpha$ of the
poset $\mathtt{P}_{\mathscr{A}}$ corresponds to a subtuple $\mathscr{B}_{\alpha}$
such that the BK-tuple $\hat{\mathscr{B}}_{\alpha}$ is \textit{\emph{irreducible}}.
The corresponding part in the discriminant $D_{\mathscr{A}}$ is $\mathbb{C}_{\mathscr{B}_{(\alpha)\backslash\alpha}}\bullet D_{\hat{\mathscr{B}}_{\alpha}}\bullet\mathbb{C}_{\mathscr{A}/\mathscr{B}_{(\alpha)}}$
by Separation Lemma \ref{Lemma. Separation Lemma}. This part is irreducible
and forms a stratum by Theorems \ref{Theorem. BK-multiplication of varieties}
and \ref{Theorem. Codimension of discriminants} about the irreducibility
of BK-multiplication and the discriminant $D_{\hat{\mathscr{B}}_{\alpha}}$.
\end{proof}
The main goal now is to figure out which strata are not contained
in each other.
\begin{lem}
\label{Lemma. Incomparable elements}For incomparable elements $\alpha,\beta$
of the poset $\mathtt{P}_{\mathscr{A}}$, the corresponding strata
$C(\mathscr{B}_{\alpha})$ and $C(\mathscr{B}_{\beta})$ are not contained
in each other. 
\end{lem}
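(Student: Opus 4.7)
The plan is to separate $\alpha$ and $\beta$ by a well-chosen order ideal of $P_{\mathscr{A}}$ and reduce the non-containment to a fiber-wise argument analogous to Proposition~\ref{Proposition. The second strata doesn't lie in the first}.

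First, I take $I := \{\gamma \in P_{\mathscr{A}} : \gamma \not\geq \beta\}$, the complement of the principal filter of $\beta$. Since $\alpha$ and $\beta$ are incomparable, $I$ is an order ideal with $\alpha\in I$ and $\beta\notin I$; moreover $(\alpha) \subseteq I$ and $(\beta)\setminus\{\beta\} \subseteq I$, so $J := I \cup \{\beta\}$ is also an order ideal and $\mathscr{B}_J / \mathscr{B}_I \cong \hat{\mathscr{B}}_\beta$. Using Corollary~\ref{Corollary. Coherency Relations} on the chains $\mathscr{B}_{(\alpha)\setminus\alpha} \subseteq \mathscr{B}_{(\alpha)} \subseteq \mathscr{B}_I$ and $\mathscr{B}_I \subseteq \mathscr{B}_J$ inside $\mathscr{A}$, the two strata produced by Corollary~\ref{Corollary. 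Subtuple from the decomposition corresponds to irreducible stratum} and the Separation Lemma~\ref{Lemma. Separation Lemma} factor as
\[
C(\mathscr{B}_\alpha) = S \bullet \mathbb{C}_{\mathscr{A}/\mathscr{B}_I}, \qquad C(\mathscr{B}_\beta) = \mathbb{C}_{\mathscr{B}_I} \bullet D_{\hat{\mathscr{B}}_\beta} \bullet \mathbb{C}_{\mathscr{A}/\mathscr{B}_J} \subseteq \mathbb{C}_{\mathscr{B}_I} \bullet D_{\mathscr{A}/\mathscr{B}_I},
\]
where $S = \mathbb{C}_{\mathscr{B}_{(\alpha)\setminus\alpha}} \bullet D_{\hat{\mathscr{B}}_\alpha} \bullet \mathbb{C}_{\mathscr{B}_I/\mathscr{B}_{(\alpha)}}$ is the $\alpha$-stratum of $D_{\mathscr{B}_I}$. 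Because generic discriminantal systems have roots, $S \not\subseteq Z_{\mathscr{B}_I}$, so Corollary~\ref{Corollay. BK-multiplicaiton for XoC} reduces the first identity to $C(\mathscr{B}_\alpha) = S \times \mathbb{C}_{\mathscr{A}\setminus\mathscr{B}_I}$.

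It thus suffices to show $S \times \mathbb{C}_{\mathscr{A}\setminus\mathscr{B}_I} \not\subseteq \mathbb{C}_{\mathscr{B}_I} \bullet D_{\mathscr{A}/\mathscr{B}_I}$, which I do by mimicking Proposition~\ref{Proposition. The second strata doesn't lie in the first}. Pick any $\Phi_1 \in S \setminus Z_{\mathscr{B}_I}$ and consider the fiber over $\Phi_1$ of the projection $\mathbb{C}_{\mathscr{A}} \to \mathbb{C}_{\mathscr{B}_I}$: on the left this fiber is all of $\mathbb{C}_{\mathscr{A}\setminus\mathscr{B}_I}$, while on the right it equals $\bigcup_{x \in V(\Phi_1)} ev_x^{-1}(D_{\mathscr{A}/\mathscr{B}_I})$, a finite union of proper subvarieties (every $ev_x$ is a surjective linear map and $D_{\mathscr{A}/\mathscr{B}_I}$ is a proper subvariety of $\mathbb{C}_{\mathscr{A}/\mathscr{B}_I}$). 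A generic $\Phi_2 \in \mathbb{C}_{\mathscr{A}\setminus\mathscr{B}_I}$ therefore produces a point of $C(\mathscr{B}_\alpha)$ not in $\mathbb{C}_{\mathscr{B}_I} \bullet D_{\mathscr{A}/\mathscr{B}_I} \supseteq C(\mathscr{B}_\beta)$, which gives $C(\mathscr{B}_\alpha) \not\subseteq C(\mathscr{B}_\beta)$. Swapping the roles of $\alpha$ and $\beta$ (using the order ideal $P_{\mathscr{A}}\setminus[\alpha]$) yields the symmetric non-containment.

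The main obstacle is to verify rigorously that the Zariski closure implicit in the definition of $\bullet$ does not enlarge the fiber of $\mathbb{C}_{\mathscr{B}_I} \bullet D_{\mathscr{A}/\mathscr{B}_I}$ over a chosen $\Phi_1 \notin Z_{\mathscr{B}_I}$. I handle this in the spirit of Theorem~\ref{Theorem. BK-multiplication of varieties}, by realizing $\mathbb{C}_{\mathscr{B}_I} \bullet D_{\mathscr{A}/\mathscr{B}_I}$ as the projection to $\mathbb{C}_{\mathscr{A}}$ of the incidence variety $\{(x,\Phi_1,\Phi_2) \in T(\mathscr{B}_I) \times \mathbb{C}_{\mathscr{A}} : x \in V(\Phi_1),\ ev_x(\Phi_2) \in D_{\mathscr{A}/\mathscr{B}_I}\}$; away from $Z_{\mathscr{B}_I}$ the universal family $\{(x,\Phi_1) : x \in V(\Phi_1)\}$ has finite fibers over $\mathbb{C}_{\mathscr{B}_I}$, so the image of the incidence variety is already closed fiber-wise and no extra points are added in the closure.
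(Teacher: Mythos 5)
Your setup is fine: the choice of the order ideal $I=P_{\mathscr{A}}\setminus[\beta]$, the factorization $C(\mathscr{B}_{\alpha})=\overline{S}\times\mathbb{C}_{\mathscr{A}\backslash\mathscr{B}_{I}}$ via the Separation Lemma and Corollary \ref{Corollay. BK-multiplicaiton for XoC}, and the inclusion $C(\mathscr{B}_{\beta})\subseteq\mathbb{C}_{\mathscr{B}_{I}}\bullet D_{\mathscr{A}/\mathscr{B}_{I}}$ are all correct. The gap is in the final fiber-wise step. You assert that for $\Phi_{1}\in S\setminus Z_{\mathscr{B}_{I}}$ the solution set $V(\Phi_{1})$ is finite, so that the fiber of $\mathbb{C}_{\mathscr{B}_{I}}\bullet D_{\mathscr{A}/\mathscr{B}_{I}}$ over $\Phi_{1}$ is a finite union of proper subvarieties. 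But $Z_{\mathscr{B}_{I}}$ is the locus of systems with \emph{no} solutions, not the locus where the number of solutions fails to be finite; a point $\Phi_{1}$ of the stratum $S$ lies in the discriminant $D_{\mathscr{B}_{I}}$, and when the irreducible quotient $\hat{\mathscr{B}}_{\alpha}$ is lir the relevant subsystem is a degenerate linear system, so $V(\Phi_{1})$ is positive-dimensional for \emph{every} $\Phi_{1}\in S$. Then $\bigcup_{x\in V(\Phi_{1})}ev_{x}^{-1}(D_{\mathscr{A}/\mathscr{B}_{I}})$ sweeps out a possibly improper subset of $\mathbb{C}_{\mathscr{A}\backslash\mathscr{B}_{I}}$ --- this is precisely the mechanism exploited in Lemma \ref{Lemma. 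If the upper filer consists of linear tuples then strata codim2 lies in codim1} to prove that $D_{\mathscr{C}}\bullet\mathbb{C}_{\mathscr{B}/\mathscr{C}}$ \emph{is} contained in $\mathbb{C}_{\mathscr{C}}\bullet D_{\mathscr{B}/\mathscr{C}}$. Consequently your intermediate claim $C(\mathscr{B}_{\alpha})\not\subseteq\mathbb{C}_{\mathscr{B}_{I}}\bullet D_{\mathscr{A}/\mathscr{B}_{I}}$ is strictly stronger than the lemma and is false in general: the right-hand side equals the union of all strata $C(\mathscr{B}_{\gamma})$ with $\gamma\geq\beta$, and if some nir element $\gamma\geq\beta$ covers a lir $\alpha$, then $C(\mathscr{B}_{\alpha})\subset C(\mathscr{B}_{\gamma})$ sits inside it. (Your closure argument has the same blind spot: finiteness of fibers away from $Z_{\mathscr{B}_{I}}$ is also what would be needed for properness, and it fails there.)

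The paper avoids this direction of non-containment altogether. It restricts to the order ideal $(\alpha)\cup(\beta)$ and quotients by $\mathscr{C}=\mathscr{B}_{(\alpha)\cap(\beta)}$; in the resulting poset $\alpha$ and $\beta$ lie in different connected components, so \emph{both} strata can be written in the form $\mathbb{C}_{\mathscr{B}}\bullet D_{\mathscr{A}/\mathscr{B}}$ for the two complementary order ideals, and the one-directional Proposition \ref{Proposition. The second strata doesn't lie in the first} (whose proof only needs a generic $\Phi\notin D_{\mathscr{B}}$, where $V(\Phi)$ really is finite) applies symmetrically. To repair your argument you would need to replace $I=P_{\mathscr{A}}\setminus[\beta]$ by an ideal that excludes every element comparable with $\alpha$ from its complement, which is essentially the paper's reduction.
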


\begin{proof}
Let us choose the order ideal corresponding to the union of principal
order ideals $(\alpha)\cup(\beta)$ of the poset $\mathtt{P}_{\mathscr{A}}$
and the BK-subtuple $\mathscr{B}=\mathscr{B}_{(\alpha)\cup(\beta)}$.
By Proposition \ref{Proposition. The second strata doesn't lie in the first},
we can explore the strata $C(\mathscr{B}_{\alpha})$ and $C(\mathscr{B}_{\beta})$
in the discriminant $D_{\mathscr{B}}.$ Consider the intersection
of the principal order ideals $(\alpha)\cap(\beta)$ and the BK-subtuple
$\mathscr{C}=\mathscr{B}_{(\alpha)\cap(\beta)}$. By Theorem \ref{Theorem. Discriminatn's splitting},
we can simplify the task by taking the new tuple $\mathscr{B}/\mathscr{C}$
and the discriminant $D_{\mathscr{B}/\mathscr{C}}$. However, the
elements $\alpha,\beta$ are not connected in the poset $(\alpha)\cup(\beta)\backslash((\alpha)\cap(\beta))$.
Therefore, the corresponding strata $C(\mathscr{B}_{\alpha}/\mathscr{C})$
and $C(\mathscr{B}_{\beta}/\mathscr{C})$ do not contain one another
by Corollary \ref{Corollary. Component of a poset}. Consequently,
the strata $C(\mathscr{B}_{\alpha})$ and $C(\mathscr{B}_{\beta})$
are not contained in each other.
\end{proof}
\begin{defn}
In a poset, the \textit{length} of a chain $C$ is the number $\ell(C)$
that is one less than the number of elements in the chain. The \textit{height
}of an element $\alpha$ of a poset is the maximal length of a chain
from the order ideal $(\alpha)$ and is denoted by $h(\alpha)$.

For comparable elements $\alpha\leq\beta$, the \textit{closed interval}
$[\alpha,\beta]$ is an intersection of the principal order ideal
$(\beta)$ with the principal order filter $[\alpha]$.
\end{defn}

\begin{thm}
\label{Theorem. Height Theorem}\textup{(Height Theorem)} If $h(\alpha)<h(\beta)$
for elements $\alpha,\beta$ in the poset $\mathtt{P}_{\mathscr{A}}$,
then the stratum $C(\mathscr{B}_{\beta})$ does not lie in the stratum
$C(\mathscr{B}_{\alpha})$.
\end{thm}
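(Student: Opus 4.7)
The plan is to split according to comparability of $\alpha$ and $\beta$ in $P_{\mathscr{A}}$. If they are incomparable, Lemma \ref{Lemma. Incomparable elements} gives the result immediately; when they are comparable, $h(\alpha)<h(\beta)$ forces $\alpha<\beta$, so this is the only remaining case. The strategy is to compare $C(\mathscr{B}_{\alpha})$ and $C(\mathscr{B}_{\beta})$ by projecting both to the common coordinate subspace $\mathbb{C}_{\mathscr{B}_{(\alpha)}}$ in the vector-space decomposition $\mathbb{C}_{\mathscr{A}}=\mathbb{C}_{\mathscr{B}_{(\alpha)}}\times\mathbb{C}_{\mathscr{A}\backslash\mathscr{B}_{(\alpha)}}$, and to show that the image of $C(\mathscr{B}_{\beta})$ is Zariski-dense while the image of $C(\mathscr{B}_{\alpha})$ is a proper closed subvariety.

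Let $\pi$ denote this projection. By the Separation Lemma \ref{Lemma. Separation Lemma} and associativity of $\bullet$,
\[
C(\mathscr{B}_{\alpha})=\bigl(\mathbb{C}_{\mathscr{B}_{(\alpha)\backslash\alpha}}\bullet D_{\hat{\mathscr{B}}_{\alpha}}\bigr)\bullet\mathbb{C}_{\mathscr{A}/\mathscr{B}_{(\alpha)}},
\]
and Corollary \ref{Corollay. BK-multiplicaiton for XoC} collapses the outer multiplication to a direct product, yielding $\pi(C(\mathscr{B}_{\alpha}))=\mathbb{C}_{\mathscr{B}_{(\alpha)\backslash\alpha}}\bullet D_{\hat{\mathscr{B}}_{\alpha}}$. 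The generic fibre of this BK-multiplication over $\mathbb{C}_{\mathscr{B}_{(\alpha)\backslash\alpha}}$ is a finite union of evaluation-preimages of $D_{\hat{\mathscr{B}}_{\alpha}}\subsetneq\mathbb{C}_{\hat{\mathscr{B}}_{\alpha}}$, which is proper by Theorem \ref{Theorem. Irreducible discriminants}, and Lemma \ref{Lemma. Evaluation bundle is a trivial vector bundle} identifies each such preimage as a proper subvariety of the full fibre. Hence $\pi(C(\mathscr{B}_{\alpha}))$ is a proper closed subvariety of $\mathbb{C}_{\mathscr{B}_{(\alpha)}}$.

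For the $\beta$-stratum, the Separation Lemma writes $C(\mathscr{B}_{\beta})=\mathbb{C}_{\mathscr{B}_{(\beta)\backslash\beta}}\bullet Y$ with a nonempty $Y=D_{\hat{\mathscr{B}}_{\beta}}\bullet\mathbb{C}_{\mathscr{A}/\mathscr{B}_{(\beta)}}$. On the Zariski-dense open locus $\mathbb{C}_{\mathscr{B}_{(\beta)\backslash\beta}}\backslash Z_{\mathscr{B}_{(\beta)\backslash\beta}}$ the evaluation maps $ev_x$ are linearly surjective onto the quotient space by Lemma \ref{Lemma. Evaluation bundle is a trivial vector bundle}, so each $ev_x^{-1}(Y)$ is nonempty and, via Definition \ref{Definition. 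BK-multiplication}, the first projection of $\mathbb{C}_{\mathscr{B}_{(\beta)\backslash\beta}}\circ Y$ (hence of $C(\mathscr{B}_{\beta})$ after closure) is dominant. Since $\alpha\in(\beta)\backslash\beta$ gives $\mathbb{C}_{\mathscr{B}_{(\alpha)}}$ as a direct summand of $\mathbb{C}_{\mathscr{B}_{(\beta)\backslash\beta}}$, the image $\pi(C(\mathscr{B}_{\beta}))$ is Zariski-dense in $\mathbb{C}_{\mathscr{B}_{(\alpha)}}$, and therefore cannot lie in the proper closed subvariety $\pi(C(\mathscr{B}_{\alpha}))$; the desired conclusion $C(\mathscr{B}_{\beta})\not\subseteq C(\mathscr{B}_{\alpha})$ follows. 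The step demanding the most care is this dominance assertion for $\mathbb{C}_{\mathscr{B}}\bullet Y$ with nonempty $Y$, which has not been isolated as a separate lemma but is a direct consequence of Definition \ref{Definition. BK-multiplication} together with the linear surjectivity of the $ev_x$ implicit in Lemma \ref{Lemma. Evaluation bundle is a trivial vector bundle}.
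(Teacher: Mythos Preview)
Your proof is correct and shares the same underlying mechanism as the paper's, but the packaging is different. For comparable $\alpha<\beta$, the paper first uses the Separation Lemma to pass to the quotient tuple $\tilde{\mathscr{A}}=\mathscr{B}_{(\beta)}/\mathscr{B}_{(\beta)\backslash[\alpha,\beta]}$ indexed by the closed interval $[\alpha,\beta]$, and then applies Proposition~\ref{Proposition. The second strata doesn't lie in the first} with the subtuple $\tilde{\mathscr{B}}=\tilde{\mathscr{B}}_{[\alpha,\beta]\backslash\beta}$: this shows $C(\tilde{\mathscr{B}}_{\beta})=\mathbb{C}_{\tilde{\mathscr{B}}}\bullet D_{\tilde{\mathscr{A}}/\tilde{\mathscr{B}}}$ is not contained in $D_{\tilde{\mathscr{B}}}\bullet\mathbb{C}_{\tilde{\mathscr{A}}/\tilde{\mathscr{B}}}$, which already contains $C(\tilde{\mathscr{B}}_{\alpha})$. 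You instead stay inside the ambient $\mathbb{C}_{\mathscr{A}}$ and compare the two strata via the coordinate projection to $\mathbb{C}_{\mathscr{B}_{(\alpha)}}$, observing that $C(\mathscr{B}_{\alpha})$ projects onto a proper closed subvariety while $C(\mathscr{B}_{\beta})$ projects dominantly. The dominance step you flag is exactly the content of the one-line proof of Proposition~\ref{Proposition. The second strata doesn't lie in the first}; you have effectively unpacked that proposition rather than quoted it. The trade-off: the paper's route is shorter once Proposition~\ref{Proposition. The second strata doesn't lie in the first} is in hand and keeps the bookkeeping localized to the interval, while your route avoids the quotient reduction altogether and makes the geometric reason for non-containment (dense image versus proper image) more transparent.
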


\begin{proof}
For incomparable elements $\alpha,\beta$, use Lemma \ref{Lemma. Incomparable elements}.
For comparable elements $\alpha<\beta$, we reduce the case to the
subposet $[\alpha,\beta]$ - closed interval, and the tuple $\tilde{\mathscr{A}}=\mathscr{B}_{(\beta)}/\mathscr{B}_{(\beta)\backslash[\alpha,\beta]}$
by Separation Lemma \ref{Lemma. Separation Lemma}. Then we apply
calculations from Separation Lemma \ref{Lemma. Separation Lemma}
to the tuple $\mathscr{A}=\tilde{\mathscr{A}}$ and its subtuples
$\mathscr{C}=\tilde{\mathscr{B}}_{\alpha},$ $\mathscr{B}=\mathscr{\tilde{B}}_{[\alpha,\beta]\backslash\beta},$
and use Proposition \ref{Proposition. The second strata doesn't lie in the first}. 
\end{proof}
\begin{lem}
\label{Lemma. Nir subtuples}If the BK-tuple $\hat{\mathscr{B}}_{\alpha}=\mathscr{B}_{(\alpha)}/\mathscr{B}_{(\alpha)\backslash\alpha}$
is a nir, for some element $\alpha$ of the poset $\mathtt{P}_{\mathscr{A}}$,
then the stratum $C(\mathscr{B}_{\alpha})$ is a hypersurface component
in the discriminant $D_{\mathscr{A}}$.
\end{lem}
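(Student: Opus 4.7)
The plan is to realize the stratum $C(\mathscr{B}_\alpha)$ explicitly as a BK-multiplication, exploit the known codimension of the irreducible discriminant at its core, and then verify that codimension adds up correctly across the outer ambient-space factors.

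First I invoke Corollary \ref{Corollary. Subtuple from the decomposition corresponds to irreducible stratum}, which already identifies the stratum as
\[
C(\mathscr{B}_\alpha) \;=\; \mathbb{C}_{\mathscr{B}_{(\alpha)\backslash\alpha}} \,\bullet\, D_{\hat{\mathscr{B}}_\alpha} \,\bullet\, \mathbb{C}_{\mathscr{A}/\mathscr{B}_{(\alpha)}}.
\]
Since $\hat{\mathscr{B}}_\alpha$ is a nir by hypothesis, Theorem \ref{Theorem. Irreducible discriminants} tells us that $D_{\hat{\mathscr{B}}_\alpha}$ is an irreducible hypersurface in $\mathbb{C}_{\hat{\mathscr{B}}_\alpha}$.

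Next I extract the codimension of a general BK-multiplication $X\bullet Y$ in $\mathbb{C}_{\mathscr{A}}$. By Lemma \ref{Lemma. Evaluation bundle is a trivial vector bundle} and the description of the generic fiber in the Remark following Definition \ref{Definition. BK-multiplication}, a generic fiber of $X \bullet Y \to X$ consists of $\mathrm{MV}_{\overline{\langle\mathscr{B}\rangle}}(\mathscr{B})$ disjoint trivial vector bundles over $Y$ of rank $\sum_{A\in\mathscr{A}\backslash\mathscr{B}}(|A|-|\tau(A)|) = \dim\mathbb{C}_{\mathscr{A}\backslash\mathscr{B}} - \dim\mathbb{C}_{\mathscr{A}/\mathscr{B}}$. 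A short dimension count then gives the additivity rule
\[
\mathrm{codim}_{\mathbb{C}_{\mathscr{A}}}\bigl(X \bullet Y\bigr) \;=\; \mathrm{codim}_{\mathbb{C}_{\mathscr{B}}}(X) \;+\; \mathrm{codim}_{\mathbb{C}_{\mathscr{A}/\mathscr{B}}}(Y).
\]
Applying this iteratively via associativity to our stratum, the outer ambient-space factors contribute codimension $0$ each, while $D_{\hat{\mathscr{B}}_\alpha}$ contributes codimension $1$, yielding $\mathrm{codim}_{\mathbb{C}_{\mathscr{A}}} C(\mathscr{B}_\alpha) = 1$.

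Finally I check irreducibility and the component property. Theorem \ref{Theorem. BK-multiplication of varieties}, together with the irreducibility of $D_{\hat{\mathscr{B}}_\alpha}$ and the fact that the fiber is a trivial bundle over each branch, shows that $C(\mathscr{B}_\alpha)$ is an irreducible subvariety of $\mathbb{C}_{\mathscr{A}}$. It is contained in $D_{\mathscr{A}}$ by the construction via Theorem \ref{Theorem. Discriminatn's splitting} and Separation Lemma \ref{Lemma. Separation Lemma}. Any irreducible component of $D_{\mathscr{A}}$ containing it has codimension at least $1$ in $\mathbb{C}_{\mathscr{A}}$; matching this with our codimension-$1$ irreducible stratum forces equality, so $C(\mathscr{B}_\alpha)$ itself is a hypersurface component of $D_{\mathscr{A}}$.

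The main obstacle is confirming the generic fiber of the BK-multiplication has the expected dimension with no degeneracies lowering it when passing to the closure; this is precisely what the variety statement of Theorem \ref{Theorem. BK-multiplication of varieties} and the trivial evaluation bundle of Lemma \ref{Lemma. Evaluation bundle is a trivial vector bundle} are set up to supply, so the remaining argument is essentially a bookkeeping of codimensions.
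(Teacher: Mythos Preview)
Your proof is correct, and the final step is the cleanest part: once you know $C(\mathscr{B}_\alpha)$ is irreducible of codimension $1$ inside the proper closed subset $D_{\mathscr{A}}\subsetneq\mathbb{C}_{\mathscr{A}}$, any irreducible component of $D_{\mathscr{A}}$ containing it has codimension exactly $1$ and hence coincides with it. This is a genuinely simpler argument than the paper's.

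The paper takes a different route. It first uses the Height Theorem and Separation Lemma to reduce to the case $h(\alpha)=0$, so that $C(\mathscr{B}_\alpha)=D_{\mathscr{B}_\alpha}\bullet\mathbb{C}_{\mathscr{A}/\mathscr{B}_\alpha}$. Then, rather than appealing to the general ``codimension-$1$ irreducible in a proper closed set is a component'' principle, it explicitly shows that $C(\mathscr{B}_\alpha)$ misses the dense open subset $(\mathbb{C}_{\mathscr{B}_\alpha}\setminus B_{\mathscr{B}_\alpha})\circ D_{\mathscr{A}/\mathscr{B}_\alpha}$ of the complementary stratum $C(\mathscr{B}_{P_{\mathscr{A}}\setminus\alpha})$, forcing $C(\mathscr{B}_\alpha)\not\subset C(\mathscr{B}_{P_{\mathscr{A}}\setminus\alpha})$. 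Your argument avoids both the height-zero reduction and this intersection analysis. The trade-off is that the paper's method is what is actually needed for the companion Lemma~\ref{Lemma. Linear don't lie in each other} (the lir case), where the stratum has codimension $2$ and your shortcut no longer applies: a codimension-$2$ irreducible could in principle sit inside a hypersurface component, so one must genuinely rule out containment in the other strata. The paper's proof of the nir lemma is written to mirror that argument.
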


\begin{proof}
By Height Theorem \ref{Theorem. Height Theorem} and Separation Lemma
\ref{Lemma. Separation Lemma}, we can assume that the element $\alpha$
has zero height. The corresponding stratum is $C(\mathscr{B}_{\alpha})=D_{\mathscr{B}_{\alpha}}\bullet\mathbb{C}_{\mathscr{A}/\mathscr{B}_{\alpha}}$,
and it has codimension one by Theorem \ref{Theorem. Codimension of discriminants}.
The stratum $C(\mathscr{B}_{\alpha})$ has an empty intersection with
$(\mathbb{C}_{\mathscr{B}_{\alpha}}\backslash B_{\mathscr{B}_{\alpha}})\circ D_{\mathscr{A}/\mathscr{B}_{\alpha}}$,
a dense open subset of $C(\mathscr{B}_{\mathtt{P}_{\mathscr{A}}\backslash\alpha})$
of codimension at least one. Therefore, the intersection $C(\mathscr{B}_{\alpha})\cap C(\mathscr{B}_{\mathtt{P}_{\mathscr{A}}\backslash\alpha})$
has codimension at least two, and $C(\mathscr{B}_{\alpha})\cancel{\subset}C(\mathscr{B}_{\mathtt{P}_{\mathscr{A}}\backslash\alpha})$.
\end{proof}
For a reducible BK-tuple $\mathscr{A}$ and an element $\alpha$ of
the poset $\mathtt{P}_{\mathscr{A}},$ the simple BK-subtuple $\mathscr{B}_{(\alpha)}$
is prelinear if and only if the tuple $\hat{\mathscr{B}}_{\alpha}=\mathscr{B}_{(\alpha)}/\mathscr{B}_{(\alpha)\backslash\alpha}$
is linear.
\begin{lem}
\label{Lemma. Linear don't lie in each other}If BK-tuples $\hat{\mathscr{B}}_{\beta}$
are lir for every element $\beta$ from the principal order filter
$[\alpha]$, then the stratum $C(\mathscr{B}_{\alpha})$ is a component
of codimension two in the discriminant $D_{\mathscr{A}}$.
\end{lem}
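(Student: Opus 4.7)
The plan is to separate two independent claims: the codimension of $C(\mathscr{B}_\alpha)$ equals $2$, and $C(\mathscr{B}_\alpha)$ is not strictly contained in any other stratum of the decomposition $D_\mathscr{A}=\bigcup_\gamma C(\mathscr{B}_\gamma)$.

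For the codimension, I would apply the Separation Lemma \ref{Lemma. Separation Lemma} to rewrite
\[ C(\mathscr{B}_\alpha)=\mathbb{C}_{\mathscr{B}_{(\alpha)\setminus\alpha}}\bullet D_{\hat{\mathscr{B}}_\alpha}\bullet\mathbb{C}_{\mathscr{A}/\mathscr{B}_{(\alpha)}}. \]
Since $\alpha$ itself lies in the principal order filter $[\alpha]$, the hypothesis makes $\hat{\mathscr{B}}_\alpha$ a lir tuple, so Theorem \ref{Theorem. Irreducible discriminants} gives $\mathrm{codim}\,D_{\hat{\mathscr{B}}_\alpha}=2$. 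A short dimension count, using the trivial bundle structure from the proof of Theorem \ref{Theorem. BK-multiplication of varieties} together with Lemma \ref{Lemma. BK-multiplication for C}, shows that BK-multiplying a variety by the full coefficient spaces on either side preserves codimension. This yields $\mathrm{codim}\,C(\mathscr{B}_\alpha)=2$.

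For the maximality step I would run through the three cases determined by the order relation between a competing element $\gamma\in P_\mathscr{A}\setminus\{\alpha\}$ and $\alpha$. If $\gamma$ and $\alpha$ are incomparable, Lemma \ref{Lemma. Incomparable elements} directly yields $C(\mathscr{B}_\alpha)\not\subset C(\mathscr{B}_\gamma)$. If $\gamma<\alpha$, then $h(\gamma)<h(\alpha)$ and the Height Theorem \ref{Theorem. Height Theorem} delivers the same conclusion.

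The essential case, where the full prelinearity hypothesis is used, is $\gamma>\alpha$. Then $\gamma\in[\alpha]$, so $\hat{\mathscr{B}}_\gamma$ is lir, and the codimension calculation above, applied to $\gamma$ in place of $\alpha$, gives $\mathrm{codim}\,C(\mathscr{B}_\gamma)=2$. Both strata are irreducible by Corollary \ref{Corollary. Subtuple from the decomposition corresponds to irreducible stratum} and have the same codimension, so any containment would force equality; but the Height Theorem guarantees $C(\mathscr{B}_\gamma)\not\subset C(\mathscr{B}_\alpha)$ because $h(\gamma)>h(\alpha)$, which rules out equality and hence the reverse containment as well. This is the only delicate case, and it is precisely where the hypothesis bites: were some $\hat{\mathscr{B}}_\gamma$ above $\alpha$ to be nir, then $C(\mathscr{B}_\gamma)$ would be a codimension-$1$ hypersurface by Lemma \ref{Lemma. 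Nir subtuples} and could in principle absorb our codimension-$2$ stratum $C(\mathscr{B}_\alpha)$.
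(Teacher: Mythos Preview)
Your proof is correct, and the handling of the case $\gamma>\alpha$ is in fact slicker than the paper's. The paper first reduces to $P_{\mathscr{A}}=[\alpha]$ by passing to $\mathscr{A}/\mathscr{B}_{P_{\mathscr{A}}\setminus[\alpha]}$, and then argues directly that $C(\mathscr{B}_\alpha)=D_{\mathscr{B}_\alpha}\bullet\mathbb{C}_{\mathscr{A}/\mathscr{B}_\alpha}$ is disjoint from the dense open set $(\mathbb{C}_{\mathscr{B}_\alpha}\setminus B_{\mathscr{B}_\alpha})\circ D_{\mathscr{A}/\mathscr{B}_\alpha}$ of the union of all remaining strata, whence the intersection has codimension $\geq 3$ and no containment is possible. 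Your route avoids the reduction entirely and replaces the explicit disjointness computation by a one-line observation: under the hypothesis every competing $C(\mathscr{B}_\gamma)$ with $\gamma>\alpha$ is irreducible of the \emph{same} codimension~$2$, so an inclusion would force equality, which the Height Theorem already forbids in the other direction. The paper's argument has the virtue of mirroring its proof of Lemma~\ref{Lemma. Nir subtuples}, while yours is more economical in that it invokes only results already established and needs no new geometric input.
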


\begin{proof}
For the principal order filter $[\alpha],$ its complement $\mathtt{P}_{\mathscr{A}}\backslash[\alpha]$
is an order ideal. By Separation Lemma \ref{Lemma. Separation Lemma}
and Theorem \ref{Theorem. Discriminatn's splitting}, we can explore
the discriminant for the new tuple $\tilde{\mathscr{A}}=\mathscr{A}/\mathscr{B}_{\mathtt{P}_{\mathscr{A}}\backslash[\alpha]}$
with the new poset $[\alpha].$ Hence, without loss of generality,
we assume that $\mathtt{P}_{\mathscr{A}}=[\alpha].$ By Height Theorem
\ref{Theorem. Height Theorem}, there are no inclusions $C(\mathscr{B}_{\alpha})\cancel{\supset}C(\mathscr{B}_{\beta})$
for every $\beta>\alpha.$ 

By Separation Lemma \ref{Lemma. Separation Lemma}, the corresponding
stratum is $C(\mathscr{B}_{\alpha})=D_{\mathscr{B}_{\alpha}}\bullet\mathbb{C}_{\mathscr{A}/\mathscr{B}_{\alpha}}$,
and it has codimension two by Theorem \ref{Theorem. Codimension of discriminants}.
The stratum $C(\mathscr{B}_{\alpha})$ has an empty intersection with
$(\mathbb{C}_{\mathscr{B}_{\alpha}}\backslash B_{\mathscr{B}_{\alpha}})\circ D_{\mathscr{A}/\mathscr{B}_{\alpha}}$,
a dense open subset of $C(\mathscr{B}_{[\alpha]\backslash\alpha})$
of codimension two. Therefore, the intersection $C(\mathscr{B}_{\alpha})\cap C(\mathscr{B}_{[\alpha]\backslash\alpha})$
has codimension at least three, and $C(\mathscr{B}_{\alpha})\cancel{\subset}C(\mathscr{B}_{\mathtt{P}_{\mathscr{A}}\backslash\alpha})$.
\end{proof}
\begin{lem}
\label{Lemma. If the upper filer consists of linear tuples then strata codim2 lies in codim1}Consider
BK-tuples $\mathscr{C}\subset\mathscr{B}$ such that $\mathscr{C}$
is linear, and $\mathscr{B}/\mathscr{C}$ is a nir. Then there exists
an inclusion for the strata $C(\mathscr{C})\subset C(\mathscr{B}\backslash\mathscr{C}).$
\end{lem}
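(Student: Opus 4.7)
The plan is to reduce the claimed inclusion of strata in $\mathbb{C}_{\mathscr{A}}$ to a concrete containment of BK-multiplications inside $\mathbb{C}_{\mathscr{B}}$, and then exploit the linearity of $\mathscr{C}$ to produce a positive-dimensional family of hypersurfaces that sweeps out all of $\mathbb{C}_{\mathscr{B}\backslash\mathscr{C}}$.

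By Separation Lemma \ref{Lemma. Separation Lemma}, $C(\mathscr{C})=D_{\mathscr{C}}\bullet\mathbb{C}_{\mathscr{A}/\mathscr{C}}$ and $C(\mathscr{B}\backslash\mathscr{C})=\mathbb{C}_{\mathscr{C}}\bullet D_{\mathscr{B}/\mathscr{C}}\bullet\mathbb{C}_{\mathscr{A}/\mathscr{B}}$. Applying Corollary \ref{Corollay. BK-multiplicaiton for XoC} to peel off the trailing $\bullet\mathbb{C}_{\mathscr{A}/\mathscr{B}}$ factor on each side as a direct product with $\mathbb{C}_{\mathscr{A}\backslash\mathscr{B}}$, the task reduces to verifying, inside $\mathbb{C}_{\mathscr{B}}$,
\[
D_{\mathscr{C}}\times\mathbb{C}_{\mathscr{B}\backslash\mathscr{C}}\subset\mathbb{C}_{\mathscr{C}}\bullet D_{\mathscr{B}/\mathscr{C}}.
\]
Since $\mathscr{C}$ is linear, after a lattice automorphism $\Phi_{\mathscr{C}}$ is a genuine linear system $Ax+b=0$ on $T(\mathscr{C})\simeq(\mathbb{C}^{\times})^{n}$. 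A generic point of the codimension-$2$ variety $D_{\mathscr{C}}$ (Theorem \ref{Theorem. Irreducible discriminants}) has $\mathrm{rank}(A)=n-1$ and $b\in\mathrm{Im}(A)$, so the root locus $V(\Phi_{\mathscr{C}})$ is a $1$-dimensional irreducible quasi-affine curve in $T(\mathscr{C})$.

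The crux is to show, for such a $\Phi_{\mathscr{C}}$, that the set $F=\bigcup_{x\in V(\Phi_{\mathscr{C}})}ev_{x}^{-1}(D_{\mathscr{B}/\mathscr{C}})$ is dense in $\mathbb{C}_{\mathscr{B}\backslash\mathscr{C}}$. Since $\mathscr{B}/\mathscr{C}$ is nir, Theorem \ref{Theorem. Irreducible discriminants} makes $D_{\mathscr{B}/\mathscr{C}}$ an irreducible hypersurface cut out by some polynomial $p$; then $\Phi\in F$ is equivalent to the regular function $g_{\Phi}(x)=p(ev_{x}(\Phi))$ having a zero on $V(\Phi_{\mathscr{C}})$. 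An incidence-variety count shows that $\mathcal{I}=\{(x,\Phi)\in V(\Phi_{\mathscr{C}})\times\mathbb{C}_{\mathscr{B}\backslash\mathscr{C}}:g_{\Phi}(x)=0\}$ has dimension $\dim V(\Phi_{\mathscr{C}})+\dim\mathbb{C}_{\mathscr{B}\backslash\mathscr{C}}-1=\dim\mathbb{C}_{\mathscr{B}\backslash\mathscr{C}}$, so its projection to $\mathbb{C}_{\mathscr{B}\backslash\mathscr{C}}$ is either dominant or has positive-dimensional generic fiber. The latter would force $g_{\Phi}$ to vanish on all of $V(\Phi_{\mathscr{C}})$ for an open set of $\Phi$, contradicting the non-triviality of $p$ together with the surjectivity of each linear evaluation map $ev_{x}$. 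Once density is established, the closure in the definition of $\bullet$ gives $\{\Phi_{\mathscr{C}}\}\times\mathbb{C}_{\mathscr{B}\backslash\mathscr{C}}\subset\mathbb{C}_{\mathscr{C}}\bullet D_{\mathscr{B}/\mathscr{C}}$, and letting $\Phi_{\mathscr{C}}$ vary over the generic open part of $D_{\mathscr{C}}$ and taking the final closure in $\mathbb{C}_{\mathscr{B}}$ yields the required inclusion.

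The hardest step will be making the density claim rigorous: a priori, zeros of $g_{\Phi}$ on the quasi-affine curve $V(\Phi_{\mathscr{C}})$ could all escape to the boundary of $V(\Phi_{\mathscr{C}})$ in its projective completion, so that $F$ misses a Zariski-open subset of $\mathbb{C}_{\mathscr{B}\backslash\mathscr{C}}$. The incidence-dimension argument sidesteps this by reducing the failure to a generic identical-vanishing statement, which one then excludes using the irreducibility of $D_{\mathscr{B}/\mathscr{C}}$ and the freedom in varying $\Phi\in\mathbb{C}_{\mathscr{B}\backslash\mathscr{C}}$ through the surjective maps $ev_{x}$.
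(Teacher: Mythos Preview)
Your approach is essentially the paper's: reduce via the Separation Lemma to showing $D_{\mathscr{C}}\times\mathbb{C}_{\mathscr{B}\setminus\mathscr{C}}\subset\mathbb{C}_{\mathscr{C}}\bullet D_{\mathscr{B}/\mathscr{C}}$, use linearity of $\mathscr{C}$ to get a one-dimensional root locus $V(\Phi_{\mathscr{C}})$ for a generic $\Phi_{\mathscr{C}}\in D_{\mathscr{C}}$, and then argue that the one-parameter family of hypersurfaces $ev_x^{-1}(D_{\mathscr{B}/\mathscr{C}})$ sweeps out a dense subset of $\mathbb{C}_{\mathscr{B}\setminus\mathscr{C}}$. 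The only difference is packaging: the paper parametrises the affine line explicitly as $x=at+b$, expands $P_{at+b}(\Phi)=p(ev_{at+b}(\Phi))$ as a polynomial in $t$, and reads off roots directly for $\Phi$ outside the proper subvarieties $Y_a\cup Y_b$; you replace this by an incidence-variety dimension count.

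Your last step is where the paper's explicit expansion does work that your abstract argument does not quite replace. If the projection $\mathcal{I}\to\mathbb{C}_{\mathscr{B}\setminus\mathscr{C}}$ failed to be dominant, your reasoning would force every hypersurface $ev_x^{-1}(D_{\mathscr{B}/\mathscr{C}})$ to coincide as $x$ runs over $V(\Phi_{\mathscr{C}})$; but ``non-triviality of $p$ together with surjectivity of each $ev_x$'' does not by itself exclude this --- each $ev_x$ being individually surjective says nothing about how the preimages of $D_{\mathscr{B}/\mathscr{C}}$ move as $x$ varies. What is actually needed is that $x\mapsto P_x(\Phi)$ is non-constant along the line, i.e.\ that the resulting polynomial in $t$ has positive degree, and this is exactly what the paper's expansion exhibits. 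To close your argument you should either perform that expansion, or supply a direct reason why $ev_{x_0}^{-1}(D_{\mathscr{B}/\mathscr{C}})\neq ev_{x_1}^{-1}(D_{\mathscr{B}/\mathscr{C}})$ for two distinct points $x_0,x_1\in V(\Phi_{\mathscr{C}})$.
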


\begin{proof}
By Theorem \ref{Theorem. Codimension of discriminants}, the discriminant
$D_{\mathscr{B}/\mathscr{C}}$ is defined by one irreducible polynomial
$P(\Phi_{\mathscr{B}/\mathscr{C}})$ since the quotient tuple $\mathscr{B}/\mathscr{C}$
is a nir. By Remark \ref{Remark. Evaluation bundle}, the evaluation
bundle $E_{\mathscr{B}\backslash\mathscr{C}}^{x}(D_{\mathscr{B}/\mathscr{C}})$
is also irreducible, and it is defined by the polynomial $P_{x}(\Phi_{\mathscr{B}\backslash\mathscr{C}}),$
obtained from the polynomial $P(\Phi_{\mathscr{B}/\mathscr{C}})$
by changing each coefficient $c_{b}$ to the linear polynomial $ev_{x}^{-1}(c_{b})$
for every $b\in C\in\mathscr{B}/\mathscr{C}$ and some fixed point
$x\in T(\mathscr{C})$. Denote by $d$ the degree $deg\,P(\Phi_{\mathscr{B}/\mathscr{C}})$.

Consider a generic point $(\Phi_{\mathscr{C}}^{\circ},\Phi_{\mathscr{B}\backslash\mathscr{C}})\in D_{\mathscr{C}}\times\mathbb{C}_{\mathscr{B}\backslash\mathscr{C}}$.
Since the polynomial system $\Phi_{\mathscr{C}}^{\circ}$ is a degenerate
linear system of more than one variable, the zero locus $V(\Phi_{\mathscr{C}}^{\circ})$
is a vector subspace in $\mathbb{C}(\mathscr{C})=\langle\mathscr{C}\rangle^{\vee}\otimes\mathbb{C}$
of positive dimension. The dimension of $V(\Phi_{\mathscr{C}}^{\circ})$
usually equals one. However, if it is not the case, then choose an
arbitrary linear subspace $U\subseteq V(\Phi_{\mathscr{C}}^{\circ})$.
By definition of the discriminant, in the general case, it is possible
to choose the subspace $U$ intersecting the torus $T(\mathscr{C})$.
If we parameterize $U$ in a natural way, $U=\{at+b,\text{ for some }a,b\in\mathbb{C}(\mathscr{C})\,|\;t\in\mathbb{C}\},$
and expand the brackets in the polynomial $P_{at+b}(\Phi_{\mathscr{B}\backslash\mathscr{C}})$,
then we obtain:
\[
P_{at+b}(\Phi_{\mathscr{B}\backslash\mathscr{C}})=P_{a}(\Phi_{\mathscr{B}\backslash\mathscr{C}})t^{d}+Q_{d-1}(\Phi_{\mathscr{B}\backslash\mathscr{C}})t^{d-1}+...+Q_{1}(\Phi_{\mathscr{B}\backslash\mathscr{C}})t+P_{b}(\Phi_{\mathscr{B}\backslash\mathscr{C}}),
\]
where $Q_{j}(\Phi_{\mathscr{B}\backslash\mathscr{C}})$ are some homogeneous
polynomials of degree $d$. For a fixed point $\Phi_{\mathscr{B}\backslash\mathscr{C}}^{\circ}$,
the polynomial $P_{at+b}(\Phi_{\mathscr{B}\backslash\mathscr{C}}^{\circ})$
of one variable in $t$ has $d$ roots with multiplicities if $P_{a}(\Phi_{\mathscr{B}\backslash\mathscr{C}}^{\circ})\neq0$
and $P_{b}(\Phi_{\mathscr{B}\backslash\mathscr{C}}^{\circ})\neq0.$
Denote by $Y_{x}$ the zero locus $V(P_{x}(\Phi_{\mathscr{B}\backslash\mathscr{C}}))\subset\mathbb{C}_{\mathscr{B}\backslash\mathscr{C}}$
for a fixed point $x\in T(\mathscr{C}).$ Hence there exist $d$ non-zero
solutions with multiplicities of the equation $P_{at+b}(\Phi_{\mathscr{B}\backslash\mathscr{C}})=0$
for every system $\Phi_{\mathscr{B}\backslash\mathscr{C}}\in\mathbb{C}_{\mathscr{B}\backslash\mathscr{C}}\backslash(Y_{a}\cup Y_{b}).$
These roots correspond to the evaluations sending the system $\Phi_{\mathscr{B}\backslash\mathscr{C}}$
to the discriminant $D_{\mathscr{B}/\mathscr{C}}$. Therefore, $\Phi_{\mathscr{C}}^{\circ}\times(\mathbb{C}_{\mathscr{B}\backslash\mathscr{C}}\backslash(Y_{a}\cup Y_{b}))\subset\mathbb{C}_{\mathscr{C}}\circ D_{\mathscr{B}/\mathscr{C}}$
for generic points $\Phi_{\mathscr{C}}^{\circ}\in D_{\mathscr{C}},$
and there exists an inclusion of the closures $D_{\mathscr{C}}\bullet\mathbb{C}_{\mathscr{B}/\mathscr{C}}\subset\mathbb{C}_{\mathscr{C}}\bullet D_{\mathscr{B}/\mathscr{C}}.$
\end{proof}
\begin{thm}
\label{Theorem. Discriminant for a BK-tuple}For a BK-tuple $\mathscr{A}$,
the discriminant $D_{\mathscr{A}}$ is the union $D_{\mathscr{A}}=\underset{\alpha\in\mathtt{P}_{\mathscr{A}}}{\cup}C(\mathscr{B}_{\alpha})$
of strata, enumerated by elements of the poset $\mathtt{P}_{\mathscr{A}}$.
More precisely, the discriminant $D_{\mathscr{A}}$ is a union of
components of codimension one or two: every hypersurface component
corresponds to a stratum $C(\mathscr{B}_{\alpha})$ such that the
tuple $\hat{\mathscr{B}}_{\alpha}$ is a nir; every component of codimension
two corresponds to a stratum $C(\mathscr{B}_{\alpha})$ such that
tuples $\hat{\mathscr{B}}_{\beta}$ are lir for all elements $\beta$
from the order filter $[\alpha]$.
\end{thm}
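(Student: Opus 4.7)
The plan is to combine the iterated application of Theorem \ref{Theorem. Discriminatn's splitting} with the poset structure from Theorem \ref{Theorem. Poset partition of a reducible BK-tuple} to write $D_\mathscr{A}$ as a union of candidate strata indexed by $P_\mathscr{A}$, and then to invoke the codimension lemmas (\ref{Lemma. Nir subtuples}, \ref{Lemma. Linear don't lie in each other}, \ref{Lemma. If the upper filer consists of linear tuples then strata codim2 lies in codim1}) to single out the genuine components.

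First I would establish the decomposition $D_\mathscr{A}=\bigcup_{\alpha\in P_\mathscr{A}}C(\mathscr{B}_\alpha)$ by induction on $|P_\mathscr{A}|$. Picking any minimal element of $P_\mathscr{A}$, Theorem \ref{Theorem. Discriminatn's splitting} applied to the BK-subtuple it defines splits $D_\mathscr{A}$ into two pieces; processing each piece inductively and regrouping via the Coherency Relations (Corollary \ref{Corollary. Coherency Relations}) and the Separation Lemma \ref{Lemma. Separation Lemma}, every summand becomes $\mathbb{C}_{\mathscr{B}_{(\alpha)\backslash\alpha}}\bullet D_{\hat{\mathscr{B}}_\alpha}\bullet\mathbb{C}_{\mathscr{A}/\mathscr{B}_{(\alpha)}}=C(\mathscr{B}_\alpha)$ for exactly one $\alpha\in P_\mathscr{A}$. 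Corollary \ref{Corollary. Subtuple from the decomposition corresponds to irreducible stratum} ensures each stratum is irreducible, and Theorem \ref{Theorem. Irreducible discriminants} pins its codimension down to $1$ or $2$ according as $\hat{\mathscr{B}}_\alpha$ is nir or lir.

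Next I would classify which strata are actual components. For $\alpha$ with $\hat{\mathscr{B}}_\alpha$ nir, Lemma \ref{Lemma. Nir subtuples} immediately certifies $C(\mathscr{B}_\alpha)$ as a hypersurface component. For $\alpha$ such that $\hat{\mathscr{B}}_\beta$ is lir for every $\beta\in[\alpha]$, Lemma \ref{Lemma. Linear don't lie in each other} provides a codimension-$2$ component. These two families are disjoint (since $\alpha\in[\alpha]$), and the Height Theorem \ref{Theorem. Height Theorem} together with Lemma \ref{Lemma. Incomparable elements} rules out any spurious inclusions among strata coming from incomparable or height-separated elements, so no component is over-counted.

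The main obstacle is the remaining case: $\hat{\mathscr{B}}_\alpha$ is lir but some $\beta>\alpha$ has $\hat{\mathscr{B}}_\beta$ nir; here the codimension-$2$ stratum $C(\mathscr{B}_\alpha)$ must be absorbed into a hypersurface component so as not to contribute a new component. I would pick $\beta\in[\alpha]$ minimal with $\hat{\mathscr{B}}_\beta$ nir, so that $\hat{\mathscr{B}}_\gamma$ is lir for every $\gamma\in[\alpha,\beta)$; the combinatorial results of \cite{pokidkin_sublattice_nodate} then guarantee that the iterated quotient $\mathscr{B}_{(\beta)\backslash\beta}/\mathscr{B}_{(\alpha)\backslash\alpha}$ is a linear BK-tuple. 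After using the Separation Lemma to restrict attention to the chain $\mathscr{B}_{(\alpha)\backslash\alpha}\subset\mathscr{B}_{(\beta)\backslash\beta}\subset\mathscr{B}_{(\beta)}$ inside $D_\mathscr{A}$, Lemma \ref{Lemma. If the upper filer consists of linear tuples then strata codim2 lies in codim1} applied to the inner pair (linear followed by nir quotient) delivers the inclusion $C(\mathscr{B}_\alpha)\subset C(\mathscr{B}_\beta)$, completing the classification. The delicate point to verify carefully is precisely that the Separation Lemma and the coherency relations are flexible enough to reduce the general mixed case to the exact two-step chain covered by Lemma \ref{Lemma. If the upper filer consists of linear tuples then strata codim2 lies in codim1}.
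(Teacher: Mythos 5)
Your proposal follows essentially the same route as the paper's proof: iterated splitting via Theorem \ref{Theorem. Discriminatn's splitting} and the Separation Lemma \ref{Lemma. Separation Lemma} to obtain the union of strata indexed by $P_{\mathscr{A}}$, then Lemmas \ref{Lemma. Nir subtuples}, \ref{Lemma. Linear don't lie in each other} and \ref{Lemma. If the upper filer consists of linear tuples then strata codim2 lies in codim1} to separate genuine components from absorbed strata. If anything, your treatment of the absorption step is slightly more careful: the paper only spells out the case where the nir element $\beta$ covers $\alpha$, whereas you reduce the general case $\beta>\alpha$ to a two-step chain by choosing $\beta$ minimal nir above $\alpha$ and observing that the intermediate quotient is a linear BK-tuple (which indeed follows from multiplicativity of the mixed volume and the unit-mixed-volume characterization of linear tuples).
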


\begin{proof}
By Corollary \ref{Corollary. Unique decomposition of a BK-tuple},
a BK-tuple $\mathscr{A}$ admits a partition $\mathscr{A}=\underset{\alpha\in\mathtt{P}_{\mathscr{A}}}{\sqcup}\mathscr{B}_{\alpha}$
such that every subtuple $\hat{\mathscr{B}}_{\alpha}$ is an irreducible
BK-tuple and every subtuple $\mathscr{B}_{\mathtt{I}}=\underset{\alpha\in\mathtt{I}}{\sqcup}\mathscr{B}_{\alpha}$
is a BK-tuple for an order ideal $\mathtt{I}\subseteq\mathtt{P}_{\mathscr{A}}$. 

By Corollary \ref{Corollary. Subtuple from the decomposition corresponds to irreducible stratum},
every element $\alpha$ of the poset $\mathtt{P}_{\mathscr{A}}$ corresponds
to a stratum $C(\mathscr{B}_{\alpha}).$ By Separation Lemma \ref{Lemma. Separation Lemma}
and Definition \ref{Definition. BK-multiplication} of BK-multiplication,
the codimension of the stratum $C(\mathscr{B}_{\alpha})$ equals the
codimension of the corresponding discriminant $D_{\hat{\mathscr{B}}_{\alpha}}$
in $\mathbb{C}_{\hat{\mathscr{B}}_{\alpha}}$. By Theorem \ref{Theorem. Codimension of discriminants},
the codimension of $D_{\hat{\mathscr{B}}_{\alpha}}$ equals two/one
if the tuple $\hat{\mathscr{B}}_{\alpha}$ is a lir/nir.

If an element $\alpha$ is covered by an element $\beta$ such that
the tuple $\hat{\mathscr{B}}_{\alpha}$ is a lir and the tuple $\hat{\mathscr{B}}_{\beta}$
is a nir, then we have the inclusion of strata $C(\mathscr{B}_{\alpha})\subset C(\mathscr{B}_{\beta})$
by Lemma \ref{Lemma. If the upper filer consists of linear tuples then strata codim2 lies in codim1}.
Otherwise, the strata do not contain one another and form components
by Lemmas \ref{Lemma. Nir subtuples} and \ref{Lemma. Linear don't lie in each other}.
\end{proof}
\begin{rem}
Denote by $\mathtt{Q}_{\mathscr{A}}$ the order ideal in the poset
$\mathtt{P}_{\mathscr{A}}$, generated by elements $\alpha$ with
a nir $\hat{\mathscr{B}}_{\alpha}$, and its BK-subtuple by $\mathscr{N}=\underset{\alpha\in\mathtt{Q}_{\mathscr{A}}}{\sqcup}\mathscr{B}_{\alpha}$.
By Theorem \ref{Theorem. Discriminant for a BK-tuple}, the discriminant
$D_{\mathscr{N}}$ is a union of hypersurfaces, and the discriminant
$D_{\mathscr{A}/\mathscr{N}}$ is a union of components of codimension
two in the space of linear systems $\mathbb{C}_{\mathscr{A}/\mathscr{N}}$.
\end{rem}

Let us review $\mathscr{A}$-discriminants for dependent tuples $\mathscr{A}$.
Recall that in a matroid, a \textit{circuit} is a minimal dependent
subset, and a \textit{cycle} is a union of circuits.
\begin{thm}
\label{Theorem. A/M  is linearly independent}\textup{\cite{pokidkin_combinatorics_2025}}
For a dependent tuple $\mathscr{A}$, the following are equivalent:

a) $\mathscr{M}$ is the minimal by inclusion subtuple of the minimal
defect;

b) $\mathscr{M}$ is the maximal essential subtuple;

c) $\mathscr{M}$ is the maximal cycle of the induced matroid from
the polymatroid on $\mathscr{A}$.

The subtuple $\mathscr{M}$ is unique, and the quotient $\mathscr{A}/\mathscr{M}$
is independent.
\end{thm}

Theorem \ref{Theorem. A/M  is linearly independent} provides a new
characterization of the subtuple $\mathscr{M}$ for the resultant
\cite{sturmfels_newton_1994} as the maximal cycle of the induced
matroid from the realizable polymatroid.
\begin{thm}
For a dependent tuple $\mathscr{A}$ with a maximal essential subtuple
$\mathscr{M}$, the sparse resultant $R_{\mathscr{A}}$ equals the
sparse resultant $R_{\mathscr{M}}$ of codimension $-\delta(\mathscr{M})$.
\end{thm}

\begin{proof}
See algebro-geometric proofs in Theorem 1.1 \cite{sturmfels_newton_1994}
and Theorem 2.15 \cite{esterov_determinantal_2007}, and a tropical
proof in Theorem 2.23 \cite{jensen_computing_2013}.
\end{proof}
\begin{cor}
\label{Corollary. Discriminants for linearly dependent tuples}For
a dependent tuple $\mathscr{A}$ with a maximal essential subtuple
$\mathscr{M}$, the discriminant $D_{\mathscr{A}}$ is the sparse
resultant $R_{\mathscr{M}}$.
\end{cor}

\section{\label{Section. Cayley discriminants}Cayley discriminants}

The \textit{Cayley discriminant} $D_{cay(\mathscr{A})}$ is the discriminant
for the Cayley set $cay(\mathscr{A})$. If a BK-tuple $\mathscr{A}$
is irreducible, then the $\mathscr{A}$-discriminant equals the Cayley
discriminant (Corollary \ref{Corollary. Irreducible BK =00003D> three types of discriminant are equal}).

Notice the isomorphism of vector spaces $\mathbb{C}_{\mathscr{A}}\cong\mathbb{C}_{cay(\mathscr{A})}.$
\begin{lem}
\label{Lemma. Cayley discriminant for a connected component of the poset}If
the poset $\mathtt{P}_{\mathscr{B}}$ is a connected component of
the poset $\mathtt{P}_{\mathscr{A}}$ for BK-tuples $\mathscr{B}\subset\mathscr{A}$,
then the Cayley discriminant is the intersection
\[
D_{cay(\mathscr{A})}\cong D_{cay(\mathscr{B})}\bullet\mathbb{C}_{\mathscr{A}/\mathscr{B}}\cap\mathbb{C}_{\mathscr{B}}\bullet D_{cay(\mathscr{A}/\mathscr{B})}.
\]
\end{lem}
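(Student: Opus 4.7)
My plan is to leverage the fact that, when $P_{\mathscr{B}}$ is a connected component of $P_{\mathscr{A}}$, the tuple $\mathscr{A}$ splits into two independent BK-pieces $\mathscr{B}$ and $\mathscr{A}\backslash\mathscr{B}$ whose linear spans are virtually complementary in $\overline{\langle\mathscr{A}\rangle}$. First I would verify that $\mathscr{A}\backslash\mathscr{B}$ is itself a BK-subtuple: being the complementary union of connected components of $P_{\mathscr{A}}$, it corresponds simultaneously to an order ideal and an order filter under the correspondence of Theorem~\ref{Theorem. Poset partition of a reducible BK-tuple }. A dimension count using $\delta(\mathscr{A})=\delta(\mathscr{B})=\delta(\mathscr{A}\backslash\mathscr{B})=0$ then forces $\overline{\langle\mathscr{B}\rangle}\cap\overline{\langle\mathscr{A}\backslash\mathscr{B}\rangle}=0$, so the direct sum $\overline{\langle\mathscr{B}\rangle}\oplus\overline{\langle\mathscr{A}\backslash\mathscr{B}\rangle}$ of saturated sublattices has finite index in $\overline{\langle\mathscr{A}\rangle}$. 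Dually, this yields an isogeny $T(\mathscr{A})\to T(\mathscr{B})\times T(\mathscr{A}/\mathscr{B})$ and identifies $\mathscr{A}/\mathscr{B}$ with $\mathscr{A}\backslash\mathscr{B}$ as tuples of finite sets.

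Under this splitting, each $f_i$ with $i\in\mathscr{B}$ involves only the $T(\mathscr{B})$-coordinate and each $f_i$ with $i\in\mathscr{A}\backslash\mathscr{B}$ only the $T(\mathscr{A}/\mathscr{B})$-coordinate, so the evaluation $ev_{x}$ of Definition~\ref{Definition. BK-multiplication} becomes independent of $x$ on $\mathbb{C}_{\mathscr{A}\backslash\mathscr{B}}$. I would then unpack the Cayley criterion for $\Phi\in D_{cay(\mathscr{A})}$: a singular root of $\sum_i\lambda_if_i$ consists of a pair $(x_1,x_2)\in T(\mathscr{B})\times T(\mathscr{A}/\mathscr{B})$ and some $\lambda\in(\mathbb{C}^{\times})^{|\mathscr{A}|}$ with $f_i=0$ for every $i$ and $\sum_i\lambda_idf_i=0$. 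The direct sum decomposition of the cotangent space breaks the gradient condition into two independent equations, and simultaneously breaks the vanishing condition into $f_i(x_1)=0$ for $i\in\mathscr{B}$ and $f_i(x_2)=0$ for $i\in\mathscr{A}\backslash\mathscr{B}$. This yields exactly $\Phi_{\mathscr{B}}\in D_{cay(\mathscr{B})}$ together with $\Phi_{\mathscr{A}\backslash\mathscr{B}}\in D_{cay(\mathscr{A}/\mathscr{B})}$, with both directions genuine because the subvectors $\lambda^{\mathscr{B}}$ and $\lambda^{\mathscr{A}\backslash\mathscr{B}}$ lie in their respective tori iff $\lambda$ does.

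Finally, I would re-express these two conditions in BK-multiplication language. Corollary~\ref{Corollay. BK-multiplicaiton for XoC} gives $D_{cay(\mathscr{B})}\bullet\mathbb{C}_{\mathscr{A}/\mathscr{B}}=\overline{D_{cay(\mathscr{B})}}\times\mathbb{C}_{\mathscr{A}\backslash\mathscr{B}}$, and in the split situation Definition~\ref{Definition. BK-multiplication} gives $\mathbb{C}_{\mathscr{B}}\bullet D_{cay(\mathscr{A}/\mathscr{B})}=\mathbb{C}_{\mathscr{B}}\times D_{cay(\mathscr{A}/\mathscr{B})}$ since $ev_x$ is independent of $x$. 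Their intersection is $\overline{D_{cay(\mathscr{B})}}\times D_{cay(\mathscr{A}/\mathscr{B})}$, matching the $D_{cay(\mathscr{A})}$ of the previous step. The main obstacle I anticipate is the rigorous justification of the direct sum decomposition from the purely combinatorial connected-component hypothesis — this rests on properties of the poset partition from \cite{pokidkin_sublattice_nodate} and requires care to ensure that the finite-index ambiguity between $\overline{\langle\mathscr{B}\rangle}\oplus\overline{\langle\mathscr{A}\backslash\mathscr{B}\rangle}$ and $\overline{\langle\mathscr{A}\rangle}$ is harmless, which is precisely what the ``$\cong$'' in the conclusion absorbs.
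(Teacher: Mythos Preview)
Your proposal is correct and follows essentially the same route as the paper: observe that $\mathscr{A}\backslash\mathscr{B}$ is itself a BK-subtuple, split the ambient lattice and torus as a direct sum, write the Cayley polynomial as a sum of two pieces in disjoint sets of variables, and conclude that a singular root for $f_{cay(\mathscr{A})}$ is exactly a pair of singular roots for $f_{cay(\mathscr{B})}$ and $f_{cay(\mathscr{A}\backslash\mathscr{B})}$. The paper's own proof is terser---it asserts the direct sum $\langle\mathscr{A}\rangle=\langle\mathscr{B}\rangle\oplus\langle\mathscr{A}\backslash\mathscr{B}\rangle$ and the torus splitting without your finite-index caveat, and it stops at the singular-point equivalence rather than explicitly rewriting the two factors via Corollary~\ref{Corollay. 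BK-multiplicaiton for XoC}---but the argument is the same.
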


\begin{proof}
Since the poset $\mathtt{P}_{\mathscr{B}}$ is a connected component,
the tuple $\mathscr{A}\backslash\mathscr{B}$ is a BK-tuple. Hence
the linear span of the BK-tuple $\mathscr{A}$ decomposes into the
direct sum $\langle\mathscr{A}\rangle=\langle\mathscr{B}\rangle\oplus\langle\mathscr{A}\backslash\mathscr{B}\rangle$,
and the torus splits $T(\mathscr{A})=T(\mathscr{B})\times T(\mathscr{A}\backslash\mathscr{B}).$
Then every system $\Phi_{\mathscr{A}}(x,y)$ from $\mathbb{C}_{\mathscr{A}}$
splits into two independent systems $\Phi_{\mathscr{B}}(x)$ and $\Phi_{\mathscr{A}\backslash\mathscr{B}}(y)$,
and every polynomial $f\in\mathbb{C}_{cay(\mathscr{A})}$ can be written
\[
f(x,y,\lambda)=\underset{A\in\mathscr{A}}{\sum}\lambda_{A}f_{A}(x,y)=\underset{B\in\mathscr{B}}{\sum}\lambda_{B}f_{B}(x)+\underset{C\in\mathscr{A}\backslash\mathscr{B}}{\sum}\lambda_{C}f_{C}(y).
\]
This means that a point $(x^{\circ},y^{\circ},\lambda^{\circ})$ is
a singular point for the polynomial $f_{cay(\mathscr{A})}$ if and
only if $(x^{\circ},\lambda_{\mathscr{B}}^{\circ})$ and $(y^{\circ},\lambda_{\mathscr{\mathscr{A}\backslash B}}^{\circ})$
are singular points for polynomials $f_{cay(\mathscr{B})}$ and $f_{cay(\mathscr{A}\backslash\mathscr{B})}$. 
\end{proof}
\begin{lem}
\label{Lemma. Simplification of a Cayley discriminant}For a BK-tuple
$\mathscr{A}$ with a connected poset $\mathtt{P}_{\mathscr{A}}$
and a BK-subtuple $\mathscr{B}$, the Cayley discriminant is isomorphic
to $D_{cay(\mathscr{A})}\cong\mathbb{C}_{\mathscr{B}}\bullet D_{cay(\mathscr{A}/\mathscr{B})}$.
\end{lem}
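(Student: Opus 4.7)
The plan is to prove both inclusions between $D_{cay(\mathscr{A})}$ and $\mathbb{C}_{\mathscr{B}}\bullet D_{cay(\mathscr{A}/\mathscr{B})}$ by writing down the Cayley degeneracy equations explicitly. Splitting the short exact sequence $0\to\langle\mathscr{B}\rangle^{\perp}\to N\to N/\langle\mathscr{B}\rangle^{\perp}\to 0$ gives coordinates $(x,y)$ on $T(\mathscr{A})$ in which every $f_B$, $B\in\mathscr{B}$, depends only on $x$. The Cayley polynomial $\sum_{A}\lambda_{A}f_{A}(x,y)$ then has a torus degenerate root $(x^{\circ},y^{\circ},\lambda^{\circ})$ (with all $\lambda^{\circ}_{A}\neq 0$) if and only if (i) $f_{A}(x^{\circ},y^{\circ})=0$ for every $A\in\mathscr{A}$, (ii) $\sum_{A\in\mathscr{A}}\lambda^{\circ}_{A}\partial_{x}f_{A}(x^{\circ},y^{\circ})=0$, and (iii) $\sum_{A\in\mathscr{A}\setminus\mathscr{B}}\lambda^{\circ}_{A}\partial_{y}f_{A}(x^{\circ},y^{\circ})=0$; the restricted range in (iii) reflects $\partial_{y}f_{B}=0$ for $B\in\mathscr{B}$.

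The forward inclusion $D_{cay(\mathscr{A})}\subseteq\mathbb{C}_{\mathscr{B}}\bullet D_{cay(\mathscr{A}/\mathscr{B})}$ is then immediate: condition (i) for $B\in\mathscr{B}$ places $x^{\circ}$ in $V(\Phi_{\mathscr{B}})$, while condition (i) for $A\in\mathscr{A}\setminus\mathscr{B}$ combined with (iii) and the nonvanishing of $\lambda^{\circ}_{A}$ on $\mathscr{A}\setminus\mathscr{B}$ is precisely the Cayley degeneracy of $ev_{x^{\circ}}\Phi_{\mathscr{A}\setminus\mathscr{B}}$, so $\Phi\in\mathbb{C}_{\mathscr{B}}\bullet D_{cay(\mathscr{A}/\mathscr{B})}$.

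For the reverse inclusion I would take a very generic $\Phi\in\mathbb{C}_{\mathscr{B}}\bullet D_{cay(\mathscr{A}/\mathscr{B})}$. Irreducibility of $\mathscr{B}$ and Theorem~\ref{Theorem. Reformulation of Kouchnirenko-Bernstein theorem} put $\Phi_{\mathscr{B}}\notin D_{\mathscr{B}}$ generically, so at each of its nondegenerate roots $x^{\circ}\in T(\mathscr{B})$ the Jacobian $J_{\mathscr{B}}(x^{\circ})=(\partial_{x}f_{B}(x^{\circ}))_{B\in\mathscr{B}}$ is invertible. At a root for which $ev_{x^{\circ}}\Phi_{\mathscr{A}/\mathscr{B}}\in D_{cay(\mathscr{A}/\mathscr{B})}$, one obtains witness data $(y^{\circ},(\lambda^{\circ}_{A})_{A\in\mathscr{A}\setminus\mathscr{B}})$ with all $\lambda^{\circ}_{A}\neq 0$ satisfying (i) and (iii). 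Condition (ii) then becomes the linear system
\[
J_{\mathscr{B}}(x^{\circ})\cdot(\lambda_{B})_{B\in\mathscr{B}}=-\sum_{A\in\mathscr{A}\setminus\mathscr{B}}\lambda^{\circ}_{A}\partial_{x}f_{A}(x^{\circ},y^{\circ}),
\]
uniquely solvable for $(\lambda_{B})_{B\in\mathscr{B}}$. Provided the solution has all entries nonzero, $(x^{\circ},y^{\circ},\lambda^{\circ})$ exhibits $\Phi\in D_{cay(\mathscr{A})}$, and taking closures finishes the inclusion.

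The main obstacle is checking that these $\lambda_{B}$ are generically all nonzero, and this is where connectedness of $P_{\mathscr{A}}$ enters essentially. Since $\mathscr{B}\subsetneq\mathscr{A}$ is irreducible and is not a connected component of $P_{\mathscr{A}}$, at least one $A\in\mathscr{A}\setminus\mathscr{B}$ must have support outside $\langle\mathscr{B}\rangle^{\perp}$, so $\partial_{x}f_{A}\not\equiv 0$; this is exactly the coupling that is absent in the decoupled situation of Lemma~\ref{Lemma. Cayley discriminant for a connected component of the poset} (where the analogous argument would force $\lambda_{B}=0$). As $\Phi_{\mathscr{A}\setminus\mathscr{B}}$ varies, the right-hand side of the linear system above moves in the $b$-dimensional cotangent space $T^{*}(\mathscr{B})$ without being forced into any coordinate hyperplane, so each locus $\{\lambda_{B}=0\}$ cuts out a proper closed subvariety of $\mathbb{C}_{\mathscr{B}}\bullet D_{cay(\mathscr{A}/\mathscr{B})}$; their finite union has dense open complement contained in $D_{cay(\mathscr{A})}$. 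Pinning down this genericity cleanly---either by a direct construction of a witness $\Phi$ with all $\lambda_{B}$ nonzero, or by a dimension count exploiting the coupling forced by connectedness---is the step I expect to demand the most care.
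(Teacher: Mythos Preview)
Your argument follows the paper's proof closely: the coordinate splitting, the forward inclusion via $\nabla_y f_B=0$ for $B\in\mathscr{B}$, and the reverse inclusion by recovering $(\lambda_B)_{B\in\mathscr{B}}$ through linear algebra in the $x$-cotangent space are all the same. The paper phrases the reverse step as a linear dependence among the $|\mathscr{B}|+1$ vectors $\{\nabla_x f_B(x^\circ)\}_{B\in\mathscr{B}}\cup\{\sum_{A\in\mathscr{A}\setminus\mathscr{B}}\lambda_A^\circ\nabla_x f_A(x^\circ,y^\circ)\}$ in a $|\mathscr{B}|$-dimensional space rather than inverting $J_{\mathscr{B}}$, but this is cosmetic.

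One sharpening on the step you flag as delicate: irreducibility of $\mathscr{B}$ is not what gives invertibility of $J_{\mathscr{B}}$ (that holds off $D_{\mathscr{B}}$ for any BK-tuple); the paper invokes irreducibility precisely for the nonvanishing of the $\lambda_B$. Connectedness of $P_{\mathscr{A}}$ makes the extra vector $v=\sum_{A\in\mathscr{A}\setminus\mathscr{B}}\lambda_A^\circ\partial_x f_A$ generically nonzero, as you observe; irreducibility of $\mathscr{B}$ is then what prevents $v$ from being forced into some hyperplane $\mathrm{span}\{\nabla_x f_B(x^\circ):B\neq B_0\}$. Were $\mathscr{B}$ reducible with a proper BK-subtuple $\mathscr{B}'$, each such hyperplane with $B_0\notin\mathscr{B}'$ would contain the fixed subspace determined by $\langle\mathscr{B}'\rangle$ independently of $\Phi_{\mathscr{B}}$, and $v$ could be trapped there identically, forcing $\lambda_{B_0}=0$. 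Relocating your appeal to irreducibility from the Jacobian step to this point aligns your argument with the paper's one-line justification and resolves the genericity you identify as needing care.
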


\begin{proof}
We can choose a basis such that systems from $\mathbb{C}_{\mathscr{B}}$
depend on variables $x\in T(\mathscr{B})$ and systems from $\mathbb{C}_{\mathscr{A}\backslash\mathscr{B}}$
depend on variables $(x,y)\in T(\mathscr{A}).$ Then every polynomial
$f\in\mathbb{C}_{cay(\mathscr{A})}$ has the form: $f(x,y,\lambda)=\underset{B\in\mathscr{B}}{\sum}\lambda_{B}f_{B}(x)+\underset{C\in\mathscr{A}\backslash\mathscr{B}}{\sum}\lambda_{C}f_{C}(x,y).$

$\boxed{\subseteq}$ A generic polynomial $f(x,y,\lambda)$ from the
discriminant $D_{cay(\mathscr{A})}$ has a singular point $(x^{\circ},y^{\circ},\lambda^{\circ})$
in the torus $T(cay(\mathscr{A}))$ such that
\begin{align*}
\nabla_{y}f(x^{\circ},y^{\circ},\lambda^{\circ}) & =\underset{C\in\mathscr{A}\backslash\mathscr{B}}{\sum}\lambda_{C}^{\circ}\,\nabla_{y}f_{C}(x^{\circ},y^{\circ})=\underset{C\in\mathscr{A}\backslash\mathscr{B}}{\sum}\lambda_{C}^{\circ}\,\nabla_{y}(ev_{x^{\circ}}f_{C})(y^{\circ})=0.
\end{align*}
This means the point $(y^{\circ},\lambda_{\mathscr{A}\backslash\mathscr{B}}^{\circ})$
is singular for the polynomial $f_{cay(\mathscr{A}/\mathscr{B})}=\underset{B\in\mathscr{A}\backslash\mathscr{B}}{\sum}\lambda_{B}^{\circ}\,(ev_{x^{\circ}}f_{B})(y)$,
and the corresponding system $(\Phi_{\mathscr{B}},f_{cay(\mathscr{A}\backslash\mathscr{B})})$
belongs to $\mathbb{C}_{\mathscr{B}}\bullet D_{cay(\mathscr{A}/\mathscr{B})}$.

$\boxed{\supseteq}$ For a generic system $(\Phi_{\mathscr{B}},f_{cay(\mathscr{A}\backslash\mathscr{B})})$
from $\mathbb{C}_{\mathscr{B}}\circ D_{cay(\mathscr{A}/\mathscr{B})}$,
there exists a root $(x^{\circ},y^{\circ},\lambda_{\mathscr{A}\backslash\mathscr{B}}^{\circ})$
from the torus $T(\mathscr{B})\times T(cay(\mathscr{A}/\mathscr{B}))$
such that $\underset{C\in\mathscr{A}\backslash\mathscr{B}}{\sum}\lambda_{C}^{\circ}\,\nabla_{y}(ev_{x^{\circ}}f_{C})(y^{\circ})=0$. 

Since the poset $\mathtt{P}_{\mathscr{A}}$ is connected, the intersection
of saturated sublattices $\mathscr{\langle\mathscr{B}}\rangle$ and
$\langle\mathscr{A}\backslash\mathscr{B}\rangle$ has a positive dimension.
Hence the vector $\underset{C\in\mathscr{A}\backslash\mathscr{B}}{\sum}\lambda_{C}^{\circ}\,\nabla_{x}(ev_{y^{\circ}}f_{C})(x^{\circ})$
is not zero, and there exists a nontrivial linear combination of linearly
dependent vectors
\[
\underset{B\in\mathscr{B}}{\sum}\mu_{B}\,\nabla_{x}f_{B}(x^{\circ})+\mu\underset{C\in\mathscr{A}\backslash\mathscr{B}}{\sum}\lambda_{C}^{\circ}\,\nabla_{x}(ev_{y^{\circ}}f_{C})(x^{\circ})=0,
\]
for some constants $\{\mu_{B}\}_{B\in\mathscr{B}}$ and $\mu$. These
constants are non-zero because vectors $\{\nabla_{x}f_{B}(x^{\circ})\}_{B\in\mathscr{B}}$
are linearly independent for the generic system $\Phi_{\mathscr{B}}$.
Therefore, the corresponding Cayley polynomial $f_{cay(\mathscr{A})}$
has the singular point $(x^{\circ},y^{\circ},\mu_{\mathscr{B}},\mu\lambda_{\mathscr{A}\backslash\mathscr{B}}^{\circ})$
in the torus $T(cay(\mathscr{A}))$ and belongs to the discriminant
$D_{cay(\mathscr{A})}$.
\end{proof}
\begin{prop}
\label{Proposition. Discriminant's strata are Cayley discriminants}For
an element $\alpha$ of the poset $\mathtt{P}_{\mathscr{A}}$, the
stratum $C(\mathscr{B}_{\alpha})$ of the $\mathscr{A}$-discriminant
can be expressed via the Cayley discriminant: $D_{cay(\mathscr{B}_{(\alpha)})}=\mathbb{C}_{\mathscr{B}_{(\alpha)\backslash\alpha}}\bullet D_{\hat{\mathscr{B}}_{\alpha}}.$
\end{prop}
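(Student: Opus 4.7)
The plan is to prove the identity by induction on $|(\alpha)\backslash\alpha|$, peeling off one minimal element of the poset $(\alpha)$ at a time via Lemma \ref{Lemma. Simplification of a Cayley discriminant}, until only the maximum $\alpha$ remains. The base case $(\alpha)=\{\alpha\}$ is immediate: $\mathscr{B}_{(\alpha)\backslash\alpha}=\emptyset$ and $\mathscr{B}_{(\alpha)}=\hat{\mathscr{B}}_\alpha$ is irreducible, so by the remark opening Section \ref{Section. Cayley discriminants} the Cayley discriminant agrees with the $\mathscr{A}$-discriminant, giving $D_{cay(\mathscr{B}_{(\alpha)})}=D_{\hat{\mathscr{B}}_\alpha}=\mathbb{C}_\emptyset\bullet D_{\hat{\mathscr{B}}_\alpha}$.

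For the inductive step, choose any minimal element $\beta$ of $(\alpha)\backslash\{\alpha\}$. Then $\beta$ is automatically minimal in $(\alpha)$, so $\mathscr{B}_\beta=\hat{\mathscr{B}}_\beta$ is an irreducible BK-subtuple of $\mathscr{B}_{(\alpha)}$; the poset $(\alpha)$ is connected because $\alpha$ is its unique maximum. Hence Lemma \ref{Lemma. Simplification of a Cayley discriminant} applies and gives
\[
D_{cay(\mathscr{B}_{(\alpha)})}\cong\mathbb{C}_{\mathscr{B}_\beta}\bullet D_{cay(\mathscr{B}_{(\alpha)}/\mathscr{B}_\beta)}.
\]
Using the bijection between BK-subtuples of $\mathscr{B}_{(\alpha)}/\mathscr{B}_\beta$ and those of $\mathscr{B}_{(\alpha)}$ containing $\mathscr{B}_\beta$, together with the iterated-quotient identity for BK-tuples, the poset of $\mathscr{B}_{(\alpha)}/\mathscr{B}_\beta$ is $(\alpha)\backslash\{\beta\}$ (still connected with $\alpha$ as maximum), and the irreducible BK-tuple attached to $\alpha$ in this reduced setting is again $\hat{\mathscr{B}}_\alpha$. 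The inductive hypothesis therefore yields
\[
D_{cay(\mathscr{B}_{(\alpha)}/\mathscr{B}_\beta)}=\mathbb{C}_{\mathscr{B}_{(\alpha)\backslash\alpha}/\mathscr{B}_\beta}\bullet D_{\hat{\mathscr{B}}_\alpha}.
\]

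Combining the two displayed identities, Lemma \ref{Lemma. BK-multiplication for C} applied to the pair $\mathscr{B}_\beta\subset\mathscr{B}_{(\alpha)\backslash\alpha}$ gives $\mathbb{C}_{\mathscr{B}_\beta}\bullet\mathbb{C}_{\mathscr{B}_{(\alpha)\backslash\alpha}/\mathscr{B}_\beta}=\mathbb{C}_{\mathscr{B}_{(\alpha)\backslash\alpha}}$, and associativity of the closed BK-multiplication along the chain $\mathscr{B}_\beta\subset\mathscr{B}_{(\alpha)\backslash\alpha}\subset\mathscr{B}_{(\alpha)}$ regroups the resulting three-fold product into $\mathbb{C}_{\mathscr{B}_{(\alpha)\backslash\alpha}}\bullet D_{\hat{\mathscr{B}}_\alpha}$, as required. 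The essential geometric content is already packaged in Lemma \ref{Lemma. Simplification of a Cayley discriminant}; the main obstacle is purely bookkeeping — verifying that its hypotheses (irreducibility of the peeled-off subtuple and connectedness of the ambient poset) remain valid at every stage of the induction, and upgrading the isomorphism it provides into an honest equality through the canonical identification $\mathbb{C}_{\mathscr{A}}\cong\mathbb{C}_{cay(\mathscr{A})}$ recorded at the start of Section \ref{Section. Cayley discriminants}.
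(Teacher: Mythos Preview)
Your proof is correct and follows essentially the same approach as the paper's: the paper also iterates Lemma \ref{Lemma. Simplification of a Cayley discriminant} down the principal order ideal $(\alpha)$ until only the irreducible top quotient $\hat{\mathscr{B}}_\alpha$ remains, then invokes the equality of Cayley and $\mathscr{A}$-discriminants for irreducible BK-tuples. The only cosmetic difference is that the paper cites the coherency relations (Corollary \ref{Corollary. Coherency Relations}) directly for the regrouping step, whereas you rebuild that particular relation from Lemma \ref{Lemma. BK-multiplication for C} plus associativity --- which is exactly how Corollary \ref{Corollary. Coherency Relations} is proved.
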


\begin{proof}
The order ideal $(\alpha)$ has the unique maximal element $\alpha.$
Iteratively apply Lemma \ref{Lemma. Simplification of a Cayley discriminant}
and use the coherency relations \ref{Corollary. Coherency Relations}.
At the last step, we obtain the Cayley discriminant of the irreducible
BK-tuple $\hat{\mathscr{B}}_{\alpha},$ which coincides with the $\mathscr{A}$-discriminant
by Theorem \ref{Theorem. Codimension of discriminants}.
\end{proof}
\begin{rem}
This proposition sets up the consistency of Theorem \ref{Theorem. Discriminant for a BK-tuple}
about components of the $\mathscr{A}$-discriminant for a BK-tuple
with Esterov Theorem 2.31 \cite{esterov_newton_2010} about components
of the reduced discriminant, expressed as Cayley discriminants.
\end{rem}

\begin{thm}
\label{Theorem. Cayley Discriminants}For a BK-tuple $\mathscr{A}$,
the Cayley discriminant $D_{cay(\mathscr{A})}$ equals the intersection
of $\mathscr{A}$-discriminant components, enumerated by maximal elements
of the poset $\mathtt{P}_{\mathscr{A}}$
\begin{align*}
D_{cay(\mathscr{A})} & =\underset{\alpha\in max\,\mathtt{P}_{\mathscr{A}}}{\cap}\mathbb{C}_{\mathscr{\mathscr{A}}\backslash\mathscr{B}_{\alpha}}\bullet D_{\hat{\mathscr{B}}_{\alpha}}.
\end{align*}
The intersection is complete and of codimension $2l+n$, where $l$/$n$
is the number of all maximal elements $\alpha$ in the poset $\mathtt{P}_{\mathscr{A}}$
such that the BK-tuple $\hat{\mathscr{B}}_{\alpha}$ is a lir/nir.
\end{thm}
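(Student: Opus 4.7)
The plan is to induct on $|P_{\mathscr{A}}|$, using the two reduction lemmas already proved in this section. The base case, $|P_{\mathscr{A}}|=1$, is where $\mathscr{A}$ itself is irreducible; then $D_{cay(\mathscr{A})}=D_{\mathscr{A}}=D_{\hat{\mathscr{B}}_\alpha}$ (the first equality by Theorem \ref{Theorem. Irreducible discriminants} and the remarks opening this section), and the indexing set $\max P_{\mathscr{A}}$ consists of the single element $\alpha$ with $\mathscr{A}\backslash\mathscr{B}_\alpha=\emptyset$, so both sides agree. Codimension $1$ or $2$ follows from Theorem \ref{Theorem. Irreducible discriminants}, matching $2l_{\mathscr{A}}+n_{\mathscr{A}}$.

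For the inductive step I would split into two cases. If $P_{\mathscr{A}}$ is disconnected, isolate a connected component $P_1$ with corresponding BK-subtuple $\mathscr{B}=\bigsqcup_{\alpha\in P_1}\mathscr{B}_\alpha$; then $\mathscr{A}\backslash\mathscr{B}$ is also a BK-subtuple and Lemma \ref{Lemma. Cayley discriminant for a connected component of the poset} expresses $D_{cay(\mathscr{A})}$ as the intersection of $D_{cay(\mathscr{B})}\bullet\mathbb{C}_{\mathscr{A}/\mathscr{B}}$ and $\mathbb{C}_{\mathscr{B}}\bullet D_{cay(\mathscr{A}/\mathscr{B})}$. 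Apply the induction hypothesis to the smaller BK-tuples $\mathscr{B}$ and $\mathscr{A}/\mathscr{B}$, whose posets $P_1$ and $P_{\mathscr{A}}\backslash P_1$ partition $P_{\mathscr{A}}$ and whose sets of maximal elements partition $\max P_{\mathscr{A}}$. Using the coherency relations from Corollary \ref{Corollary. Coherency Relations} to rewrite the nested BK-multiplications, the two intersected collections combine into one intersection over $\max P_{\mathscr{A}}$. If instead $P_{\mathscr{A}}$ is connected with at least two elements, pick a minimal $\alpha$; then $\mathscr{B}_\alpha=\hat{\mathscr{B}}_\alpha$ is irreducible, Lemma \ref{Lemma. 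Simplification of a Cayley discriminant} yields $D_{cay(\mathscr{A})}\cong\mathbb{C}_{\mathscr{B}_\alpha}\bullet D_{cay(\mathscr{A}/\mathscr{B}_\alpha)}$, and the induction hypothesis applies to $\mathscr{A}/\mathscr{B}_\alpha$, whose poset is $P_{\mathscr{A}}\backslash\{\alpha\}$ by the bijection between BK-subtuples of a tuple and BK-subtuples of its quotient that contain the denominator. The identity $\mathsf{n}/\mathsf{k}\cong(\mathsf{n}/\mathsf{l})/(\mathsf{k}/\mathsf{l})$ guarantees that the irreducible factor at each maximal $\beta$ of the reduced poset still equals $\hat{\mathscr{B}}_\beta$, and $\max P_{\mathscr{A}}\backslash\{\alpha\}=\max P_{\mathscr{A}}$ since $\alpha$ is minimal but not maximal (as $|P_{\mathscr{A}}|\geq 2$ and $P_{\mathscr{A}}$ is connected), so the formula is reproduced after the outer BK-multiplication by $\mathbb{C}_{\mathscr{B}_\alpha}$.

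The codimension tally is easier: each factor $\mathbb{C}_{\mathscr{A}\backslash\mathscr{B}_\alpha}\bullet D_{\hat{\mathscr{B}}_\alpha}$ has the same codimension $1$ or $2$ as $D_{\hat{\mathscr{B}}_\alpha}$ inside $\mathbb{C}_{\hat{\mathscr{B}}_\alpha}$ (by Theorem \ref{Theorem. Irreducible discriminants} together with the trivial-bundle structure of Theorem \ref{Theorem. BK-multiplication of varieties} and Lemma \ref{Lemma. Evaluation bundle is a trivial vector bundle}, which preserve fiber codimension). The main obstacle is the completeness claim, i.e.\ that the codimensions actually add up to $2l_{\mathscr{A}}+n_{\mathscr{A}}$. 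I would extract this by tracing the induction: each application of Lemma \ref{Lemma. Cayley discriminant for a connected component of the poset} yields a transverse intersection automatically, because its proof decomposes the torus as $T(\mathscr{A})=T(\mathscr{B})\times T(\mathscr{A}\backslash\mathscr{B})$ and the two intersected conditions live on disjoint coordinate blocks of $\mathbb{C}_{\mathscr{A}}$; each application of Lemma \ref{Lemma. Simplification of a Cayley discriminant} is an actual isomorphism and so preserves complete intersection structure. Hence the codimension accumulates additively through the induction, giving exactly $2l_{\mathscr{A}}+n_{\mathscr{A}}$.
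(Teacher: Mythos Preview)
Your approach is essentially the same as the paper's: both rely on Lemmas \ref{Lemma. Cayley discriminant for a connected component of the poset} and \ref{Lemma. Simplification of a Cayley discriminant} together with the coherency relations to peel the poset down to its maximal antichain, where the Cayley discriminant becomes a genuine direct product $\prod_{\alpha\in\max P_{\mathscr{A}}}D_{\hat{\mathscr{B}}_\alpha}$ and completeness is immediate. The paper does this in one stroke---quotienting by $\mathscr{C}=\mathscr{A}\backslash\bigcup_{\alpha\in\max P_{\mathscr{A}}}\mathscr{B}_\alpha$ to land directly on the semi-irreducible quotient---whereas you organize the same reduction as an induction on $|P_{\mathscr{A}}|$, alternating between removing a minimal element (Lemma \ref{Lemma. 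Simplification of a Cayley discriminant}) and splitting off a connected component (Lemma \ref{Lemma. Cayley discriminant for a connected component of the poset}). Your version is arguably more careful, since it respects the connectedness hypothesis of Lemma \ref{Lemma. Simplification of a Cayley discriminant} at every step rather than invoking it wholesale.

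One point you glide over, just as the paper does: in the connected inductive step you need $\mathbb{C}_{\mathscr{B}_\alpha}\bullet\bigcap_\beta Y_\beta=\bigcap_\beta(\mathbb{C}_{\mathscr{B}_\alpha}\bullet Y_\beta)$ to ``reproduce the formula after the outer BK-multiplication.'' This distributivity of $\bullet$ over intersections is not generally true for the open multiplication $\circ$ (since $\bigcup_x(A_x\cap B_x)\subsetneq(\bigcup_x A_x)\cap(\bigcup_x B_x)$), and neither you nor the paper justifies it explicitly. The cleanest fix is to bypass the induction for the displayed equality and argue directly, as the paper effectively does: once you know $D_{cay(\mathscr{A})}=\mathbb{C}_{\mathscr{C}}\bullet\prod_\alpha D_{\hat{\mathscr{B}}_\alpha}$, the product structure of $\mathbb{C}_{\mathscr{A}/\mathscr{C}}$ and the trivial-bundle description of Theorem \ref{Theorem. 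BK-multiplication of varieties} let you identify this with the stated intersection and read off its codimension.
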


\begin{proof}
By Lemma \ref{Lemma. Simplification of a Cayley discriminant} and
coherency relations from Corollary \ref{Corollary. Coherency Relations},
we can simplify the Cayley discriminant for the BK-tuple $\mathscr{A}$
to the Cayley discriminant for the BK-tuple $\mathscr{A}/\mathscr{C}$,
where $\mathscr{C}=\mathscr{A}\backslash\underset{\alpha\in max\,\mathtt{P}_{\mathscr{A}}}{\cup}\mathscr{B}_{\alpha}$.
Then the poset $\mathtt{P}_{\mathscr{A}/\mathscr{C}}$ is a disjoint
union of $|max\,\mathtt{P}_{\mathscr{A}}|$ incomparable elements.
This means that the BK-tuple $\mathscr{A}/\mathscr{C}$ is semi-irreducible,
and $\mathbb{C}_{\mathscr{A}/\mathscr{C}}=\underset{\alpha\in max\,\mathtt{P}_{\mathscr{A}}}{\prod}\mathbb{C}_{\hat{\mathscr{B}}_{\alpha}}$.
Hence the Cayley discriminant $D_{cay(\mathscr{A}/\mathscr{C})}$
equals the direct product $\underset{\alpha\in max\,\mathtt{P}_{\mathscr{A}}}{\prod}D_{\hat{\mathscr{B}}_{\alpha}}$
by Lemma \ref{Lemma. Cayley discriminant for a connected component of the poset}
and by using the isomorphism of discriminants $D_{cay(\hat{\mathscr{B}}_{\alpha})}=D_{\hat{\mathscr{B}}_{\alpha}}$
for irreducible BK-tuples $\hat{\mathscr{B}}_{\alpha}$. Therefore,
the intersection $\underset{\alpha\in max\,\mathtt{P}_{\mathscr{A}}}{\cap}\mathbb{C}_{\mathscr{\mathscr{A}}\backslash\mathscr{B}_{\alpha}}\bullet D_{\hat{\mathscr{B}}_{\alpha}}$
is complete, and the codimension is clear.
\end{proof}
Following the proof of Lemma \ref{Lemma. Simplification of a Cayley discriminant},
it is possible to show (notice Theorem \ref{Theorem. A/M  is linearly independent}):
\begin{thm}
\label{Theorem. The Cayley discriminant for a linearly dependent tuple}For
a dependent tuple $\mathscr{A},$ the Cayley discriminant equals the
multiplication $R_{\mathscr{M}}\bullet D_{cay(\mathscr{A}/\mathscr{M})}$,
where $R_{\mathscr{M}}$ is the resultant of the maximal essential
subtuple $\mathscr{M}$.
\end{thm}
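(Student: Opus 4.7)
The plan is to adapt the proof of Lemma \ref{Lemma. Simplification of a Cayley discriminant}, with the minimal subtuple $\mathscr{M}$ of minimal defect playing the role there occupied by the irreducible BK-subtuple, and with Proposition \ref{Proposition. A/M  is linearly independent} ensuring that $\mathscr{A}/\mathscr{M}$ is linearly independent so that $D_{cay(\mathscr{A}/\mathscr{M})}$ is analysable by the tools already developed. After a suitable splitting, every Cayley polynomial in $\mathbb{C}_{cay(\mathscr{A})}$ writes as
\[
f(x,y,\lambda)=\sum_{M\in\mathscr{M}}\lambda_{M}f_{M}(x)+\sum_{C\in\mathscr{A}\backslash\mathscr{M}}\lambda_{C}f_{C}(x,y),
\]
with $x\in T(\mathscr{M})$ and $y$ the remaining torus coordinates.

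For the inclusion $\subseteq$, take a generic $f\in D_{cay(\mathscr{A})}$ with a singular point $(x^{\circ},y^{\circ},\lambda^{\circ})\in T(cay(\mathscr{A}))$. Vanishing of $\partial f/\partial\lambda_{A}=f_{A}(x,y)$ shows $(x^{\circ},y^{\circ})$ is a common root of $\Phi_{\mathscr{A}}$; in particular $x^{\circ}$ is a root of $\Phi_{\mathscr{M}}$ and hence $\Phi_{\mathscr{M}}\in R_{\mathscr{M}}$. Since $\nabla_{y}f_{M}\equiv 0$ for $M\in\mathscr{M}$, the equation $\nabla_{y}f(x^{\circ},y^{\circ},\lambda^{\circ})=0$ reduces to $\sum_{C}\lambda_{C}^{\circ}\,\nabla_{y}(ev_{x^{\circ}}f_{C})(y^{\circ})=0$, exhibiting $(y^{\circ},\lambda_{\mathscr{A}\backslash\mathscr{M}}^{\circ})$ as a singular point of the Cayley polynomial of $ev_{x^{\circ}}\Phi_{\mathscr{A}\backslash\mathscr{M}}$. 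Thus $f\in R_{\mathscr{M}}\bullet D_{cay(\mathscr{A}/\mathscr{M})}$.

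For the inclusion $\supseteq$, take a generic element of $R_{\mathscr{M}}\circ D_{cay(\mathscr{A}/\mathscr{M})}$: it yields a common root $x^{\circ}\in T(\mathscr{M})$ of $\Phi_{\mathscr{M}}$ together with $(y^{\circ},\lambda_{\mathscr{A}\backslash\mathscr{M}}^{\circ})$ making the Cayley polynomial of $ev_{x^{\circ}}\Phi_{\mathscr{A}\backslash\mathscr{M}}$ singular. To promote this to a singular point of the full Cayley polynomial we must find nonzero scalars $\lambda_{M}^{\circ}$ and a nonzero $\mu$ solving
\[
\sum_{M\in\mathscr{M}}\lambda_{M}^{\circ}\,\nabla_{x}f_{M}(x^{\circ})+\mu\sum_{C\in\mathscr{A}\backslash\mathscr{M}}\lambda_{C}^{\circ}\,\nabla_{x}f_{C}(x^{\circ},y^{\circ})=0.
\]
Generically the map $(\lambda_{M})\mapsto\sum_{M}\lambda_{M}\nabla_{x}f_{M}(x^{\circ})$ is surjective onto the $\dim\langle\mathscr{M}\rangle$-dimensional ambient space, so for any $\mu$ the solution set is a coset of its kernel, of positive dimension $-\delta(\mathscr{M})$.

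The principal obstacle is the last step: arranging that the chosen $(\lambda_{M}^{\circ})$ has every coordinate nonzero. The key observation replaces the irreducibility argument of Lemma \ref{Lemma. Simplification of a Cayley discriminant} by the defining minimality of $\mathscr{M}$. Indeed, if some coordinate $\lambda_{M_{0}}$ vanished identically on the kernel above, a quick rank count shows that $\dim\langle\mathscr{M}\setminus\{M_{0}\}\rangle=\dim\langle\mathscr{M}\rangle-1$, so the proper subtuple $\mathscr{M}\setminus\{M_{0}\}$ would have defect exactly $\delta(\mathscr{M})$, contradicting the minimality of $\mathscr{M}$ among subtuples of minimal defect. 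Hence every coordinate functional is nontrivial on the kernel, a generic point of the coset has all coordinates nonzero, and picking such a $(\lambda_{M}^{\circ})$ completes the singularity. Outside this point, the argument is a formal transcription of the BK case.
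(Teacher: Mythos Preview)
Your proposal is correct and follows precisely the approach the paper indicates: the paper's own proof is the single sentence ``Following the proof of Lemma~\ref{Lemma. Simplification of a Cayley discriminant}, it's possible to show (notice Proposition~\ref{Proposition. A/M  is linearly independent}),'' and you have carried out exactly this adaptation in detail. The one place where the argument genuinely differs from Lemma~\ref{Lemma. Simplification of a Cayley discriminant} is the nonvanishing of the coefficients $\lambda_M^\circ$, where irreducibility of the BK-subtuple is replaced by minimality of $\mathscr{M}$; your rank-count argument handles this correctly, since minimality forces $\dim\langle\mathscr{M}\setminus\{M_0\}\rangle=\dim\langle\mathscr{M}\rangle$ for every $M_0$, so the restricted gradients still span and no coordinate functional can vanish on the kernel.
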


\begin{rem}
The subtuple $\mathscr{A}\backslash\mathscr{M}$ consists of coloops
of the induced matroid from the polymatroid on $\mathscr{A}$, and
the quotient $\mathscr{A}/\mathscr{M}$ is independent by Theorem
\ref{Theorem. A/M  is linearly independent}. Since additional coefficients
in the Cayley trick are not allowed to be zero, this clarifies the
distinguished contribution of $D_{cay(\mathscr{A}/\mathscr{M})}$
in the BK-multiplication. 
\end{rem}

\begin{cor}
\label{Corollary. Cayley discriminant for an essential linearly dependent tuple}For
an essential dependent tuple $\mathscr{A}$, the Cayley discriminant
equals the sparse resultant $R_{\mathscr{A}}$ \textup{(see Proposition
6.1 and Lemma 6.3 \cite{dickenstein_tropical_2007})}.
\end{cor}

\section{\label{Section. Mixed discriminants}Mixed discriminants}
\begin{lem}
\label{Lemma. Mixed discriminant for a connected component of the poset}If
the poset $\mathtt{P}_{\mathscr{A}}$ has more than one connected
component for a BK-tuple $\mathscr{A}$, then the mixed discriminant
is empty.
\end{lem}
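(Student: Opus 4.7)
The plan is to exploit the splitting of the polynomial system induced by the disconnection of $P_{\mathscr{A}}$ and then reduce the claim to an elementary block-diagonal observation about gradients.

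First I would fix a connected component $I\subset P_{\mathscr{A}}$ and put $\mathscr{B} := \mathscr{B}_I$. Since a connected component of a poset is simultaneously an order ideal and an order filter, its complement $P_{\mathscr{A}}\setminus I$ is also an order ideal, so Theorem \ref{Theorem. Poset partition of a reducible BK-tuple } identifies both $\mathscr{B}$ and $\mathscr{A}\setminus\mathscr{B}$ as BK-subtuples, both of which are nonempty and proper by the hypothesis that $P_{\mathscr{A}}$ has at least two components. From $\delta(\mathscr{A}) = \delta(\mathscr{B}) = \delta(\mathscr{A}\setminus\mathscr{B}) = 0$ together with $\mathfrak{c}(\mathscr{B}) + \mathfrak{c}(\mathscr{A}\setminus\mathscr{B}) = \mathfrak{c}(\mathscr{A})$, a dimension count forces the lattice direct sum $\langle\mathscr{A}\rangle = \langle\mathscr{B}\rangle \oplus \langle\mathscr{A}\setminus\mathscr{B}\rangle$ already used in Lemma \ref{Lemma. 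Cayley discriminant for a connected component of the poset}. Consequently $T(\mathscr{A}) = T(\mathscr{B})\times T(\mathscr{A}\setminus\mathscr{B})$, and every system decomposes as $\Phi_{\mathscr{A}} = (\Phi_{\mathscr{B}}(x), \Phi_{\mathscr{A}\setminus\mathscr{B}}(y))$ with disjoint variable groups.

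Next I would carry out the linear-algebra step at a putative non-degenerate multiple root $(x_0, y_0)\in T(\mathscr{A})$. Because $f_B$ is independent of $y$ for $B\in\mathscr{B}$ and $f_C$ is independent of $x$ for $C\in\mathscr{A}\setminus\mathscr{B}$, the gradient $df_B(x_0,y_0)$ lies entirely in the $x$-block of the cotangent space and $df_C(x_0,y_0)$ lies entirely in the complementary $y$-block. Any nontrivial relation $\sum_{A\in\mathscr{A}} \lambda_A\, df_A(x_0, y_0) = 0$ therefore splits into two independent relations $\sum_{B\in\mathscr{B}} \lambda_B\, df_B(x_0) = 0$ and $\sum_{C\in\mathscr{A}\setminus\mathscr{B}} \lambda_C\, df_C(y_0) = 0$, at least one of which is nontrivial. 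This produces a linear dependence on the proper subtuple $\{df_B\}_{B\in\mathscr{B}}$ or $\{df_C\}_{C\in\mathscr{A}\setminus\mathscr{B}}$, contradicting the defining property of a non-degenerate multiple root. Hence no system in $\mathbb{C}_{\mathscr{A}}$ admits such a root, and the mixed discriminant, being the closure of an empty set, is empty.

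The only substantive obstacle is the lattice direct-sum decomposition used in the first paragraph; once the combinatorial fact that connected components of $P_{\mathscr{A}}$ produce pairs of BK-subtuples with complementary linear spans is in hand, the linear-algebra conclusion is immediate.
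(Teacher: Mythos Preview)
Your proof is correct and follows exactly the same approach as the paper's own argument: split $\mathscr{A}$ into the BK-subtuples coming from the connected components of $P_{\mathscr{A}}$, observe that the resulting direct-sum decomposition of $\langle\mathscr{A}\rangle$ forces the system to separate into independent blocks, and conclude that any linear dependence among the differentials already occurs within a proper subtuple. The paper's proof compresses all of this into four sentences; you have simply made the block-diagonal linear-algebra step and the reference to the decomposition already used in Lemma~\ref{Lemma. Cayley discriminant for a connected component of the poset} explicit.
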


\begin{proof}
Every connected component corresponds to a BK-subtuple of a BK-tuple
$\mathscr{A}$, and the tuple $\mathscr{A}$ decomposes into a disjoint
union of BK-subtuples. Then we can split the variables and polynomial
systems into independent subsystems with BK-supports. This means that
a polynomial system with the support $\mathscr{A}$ cannot have a
non-degenerate multiple root.
\end{proof}
\begin{lem}
\label{Lemma. Simplification of a mixed discriminant}For BK-tuples
$\mathscr{B}\subset\mathscr{A}$, the mixed discriminant $\mathring{D}_{\mathscr{A}}$
equals $\mathbb{C}_{\mathscr{B}}\bullet\mathring{D}_{\mathscr{A}/\mathscr{B}}$.
\end{lem}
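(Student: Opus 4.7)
The plan is to adapt the splitting argument of Theorem \ref{Theorem. Discriminatn's splitting} to the mixed setting and observe that the non-degenerate multiplicity condition automatically excludes the branch $\mathring{D}_{\mathscr{B}}\bullet\mathbb{C}_{\mathscr{A}/\mathscr{B}}$, leaving only $\mathbb{C}_{\mathscr{B}}\bullet\mathring{D}_{\mathscr{A}/\mathscr{B}}$. Throughout I may assume $\mathscr{B}$ is a proper BK-subtuple of $\mathscr{A}$; the cases $\mathscr{B}=\emptyset,\mathscr{A}$ are trivial.

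For the inclusion $\mathring{D}_{\mathscr{A}}\subseteq\mathbb{C}_{\mathscr{B}}\bullet\mathring{D}_{\mathscr{A}/\mathscr{B}}$, I would take a generic $\Phi\in\mathring{D}_{\mathscr{A}}$ with a non-degenerate multiple root $(x_{0},y_{0})\in T(\mathscr{A})$. Since $\{df_{B}(x_{0},y_{0})\}_{B\in\mathscr{B}}$ is a proper subtuple of $\{df_{A}\}_{A\in\mathscr{A}}$, non-degeneracy forces it to be linearly independent. Lemma \ref{Lemma. Linear Algebra} then yields that the projected vectors $\{d(ev_{x_{0}}f_{C})(y_{0})\}_{C\in\mathscr{A}/\mathscr{B}}$ are linearly dependent. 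To promote this to non-degenerate multiplicity, I would observe that any linear dependence on a proper subset $S\subsetneq\mathscr{A}/\mathscr{B}$ would give a vector $v=\sum_{C\in S}\mu_{C}df_{A}(x_{0},y_{0})$ with zero image in the quotient cotangent space; hence $v\in\mathrm{span}\{df_{B}(x_{0})\}_{B\in\mathscr{B}}$, producing a linear dependence on the proper subset $\mathscr{B}\cup S\subsetneq\mathscr{A}$ and contradicting non-degeneracy. Therefore $ev_{x_{0}}\Phi_{\mathscr{A}\backslash\mathscr{B}}\in\mathring{D}_{\mathscr{A}/\mathscr{B}}$ and $\Phi\in\mathbb{C}_{\mathscr{B}}\bullet\mathring{D}_{\mathscr{A}/\mathscr{B}}$.

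For the reverse inclusion, I would pick a generic point in $\mathbb{C}_{\mathscr{B}}\circ\mathring{D}_{\mathscr{A}/\mathscr{B}}$: a system $\Phi_{\mathscr{B}}$ with a root $x_{0}$ at which $\{df_{B}(x_{0})\}_{B}$ is linearly independent, and $\Phi_{\mathscr{A}\backslash\mathscr{B}}$ such that $ev_{x_{0}}\Phi_{\mathscr{A}\backslash\mathscr{B}}$ has a non-degenerate multiple root $y_{0}$ with a unique (up to scalar) dependence vector $\mu$. Lifting $\mu$ via the splitting of the cotangent space into $\overline{\langle\mathscr{B}\rangle}^{\vee}$ and its complement yields unique coefficients $\lambda_{B}$ with $\sum_{A\in\mathscr{A}\backslash\mathscr{B}}\mu_{A}df_{A}(x_{0},y_{0})+\sum_{B\in\mathscr{B}}\lambda_{B}df_{B}(x_{0})=0$, witnessing $(x_{0},y_{0})$ as a multiple root of $\Phi$. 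To check non-degeneracy, I would analyze proper subsets $S=S_{1}\sqcup S_{2}\subsetneq\mathscr{A}$ with $S_{i}$ in $\mathscr{B},\mathscr{A}\backslash\mathscr{B}$: projecting any candidate dependence to the quotient either annihilates all $S_{2}$-coefficients (forcing a dependence on $S_{1}\subseteq\mathscr{B}$, impossible) or produces a dependence on $\{d(ev_{x_{0}}f_{C})\}_{C\in S_{2}}$, which is admissible only when $S_{2}=\mathscr{A}\backslash\mathscr{B}$.

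The main obstacle will be the remaining case $S_{2}=\mathscr{A}\backslash\mathscr{B}$, $S_{1}\subsetneq\mathscr{B}$: here the argument forces the $\overline{\langle\mathscr{B}\rangle}^{\vee}$-component of $\sum\mu_{A}df_{A}(x_{0},y_{0})$ to lie in the proper subspace $\mathrm{span}\{df_{B}(x_{0}):B\in S_{1}\}$. I would rule this out by a genericity argument: perturbing the coefficients of $\Phi_{\mathscr{A}\backslash\mathscr{B}}$ moves this horizontal component freely without affecting the existence of a non-degenerate multiple root of $ev_{x_{0}}\Phi_{\mathscr{A}\backslash\mathscr{B}}$, so the locus where the component lands in any proper coordinate subspace of $\overline{\langle\mathscr{B}\rangle}^{\vee}$ relative to the basis $\{df_{B}(x_{0})\}_{B\in\mathscr{B}}$ is a proper closed subvariety of $\mathbb{C}_{\mathscr{B}}\circ\mathring{D}_{\mathscr{A}/\mathscr{B}}$. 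Since $\mathring{D}_{\mathscr{A}}$ is closed and contains the complement of this proper subvariety, it contains the entire closure $\mathbb{C}_{\mathscr{B}}\bullet\mathring{D}_{\mathscr{A}/\mathscr{B}}$.
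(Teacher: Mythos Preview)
Your proposal is correct and follows the same route as the paper. For the inclusion $\mathring{D}_{\mathscr{A}}\subseteq\mathbb{C}_{\mathscr{B}}\bullet\mathring{D}_{\mathscr{A}/\mathscr{B}}$ you reproduce the paper's argument almost verbatim, only adding the explicit check that $y_{0}$ is a \emph{non-degenerate} multiple root for $ev_{x_{0}}\Phi_{\mathscr{A}\backslash\mathscr{B}}$; the paper asserts this in one sentence without the subset-lifting verification you spell out.

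For the reverse inclusion the paper writes literally ``It's clear'' and nothing more. Your elaboration is thus strictly additional content. The analysis you give is sound: since $\mathscr{A}$ is a BK-tuple, the differentials form a square system, so non-degeneracy of the multiple root $(x_{0},y_{0})$ amounts to all coordinates of the (essentially unique) dependence vector being nonzero, i.e.\ $\lambda_{B}\neq 0$ for every $B\in\mathscr{B}$. Your reduction to the residual case $S_{2}=\mathscr{A}\backslash\mathscr{B}$, $S_{1}\subsetneq\mathscr{B}$ is exactly this observation. The genericity step disposing of that case is correct in spirit; to make the phrase ``moves this horizontal component freely'' rigorous, it suffices to note that within the fibre $ev_{x_{0}}^{-1}(\Psi)$ the map $\Phi_{\mathscr{A}\backslash\mathscr{B}}\mapsto\sum_{A}\mu_{A}\nabla_{x}f_{A}(x_{0},y_{0})$ is affine and nonconstant (this uses that the poset chain from $\mathscr{B}$ to $\mathscr{A}$ is nontrivial, so each $A\in\mathscr{A}\backslash\mathscr{B}$ has at least two monomials over some point of $\tau(A)$), hence the preimage of any fixed hyperplane of $V_{1}$ is a proper affine subspace of the fibre. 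That is the only place your write-up is slightly informal, and it is precisely the step the paper's ``clear'' sweeps under the rug.
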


\begin{proof}
$\boxed{\subseteq}$ If the mixed discriminant $\mathring{D}_{\mathscr{A}}$
is not empty, then its generic system $\Phi$ has a non-degenerate
multiple root $(x,y)\in T(\mathscr{A})$ such that vectors $\{df_{A}(x,y)\}_{A\in\mathscr{A}}$
are linearly dependent, and $x\in T(\mathscr{B})$. If the point $x$
is singular for the subsystem $\{df_{B}(x)\}_{B\in\mathscr{B}}$,
then the point $(x,y)$ cannot be a non-degenerate multiple root for
the system $\Phi$. Hence the set $\{d(ev_{x}f_{C})(y)\}_{C\in\mathscr{A}/\mathscr{B}}$
is linearly dependent by Lemma \ref{Lemma. Linear Algebra}. Moreover,
the point $y$ is a non-degenerate multiple root for the system $ev_{x}\Phi_{\mathscr{A}\backslash\mathscr{B}}$,
and we obtain the inclusion for discriminants. $\boxed{\supseteq}$
It is clear. 
\end{proof}
\begin{thm}
\label{Theorem. The mixed discriminant for a BK-tuple}For a BK-tuple
$\mathscr{A}$, the mixed discriminant equals the Cayley discriminant
if the poset $\mathtt{P}_{\mathscr{A}}$ has only one maximal element.
Otherwise, the mixed discriminant is empty.
\end{thm}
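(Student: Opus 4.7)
The plan is to dichotomize on the number of maximal elements of $P_{\mathscr{A}}$ and reduce each case to the irreducible setting by means of Lemma~\ref{Lemma. Simplification of a mixed discriminant}. The key preliminary observation is that $P_{\mathscr{A}}\backslash\max P_{\mathscr{A}}$ is always an order ideal: if $\gamma\leq\beta$ and $\beta<\alpha$ for some $\alpha$, then $\gamma<\alpha$. Hence $\mathscr{B}:=\mathscr{B}_{P_{\mathscr{A}}\backslash\max P_{\mathscr{A}}}$ is a BK-subtuple of $\mathscr{A}$. By the bijection between BK-subtuples of $\mathscr{A}$ containing $\mathscr{B}$ and BK-subtuples of $\mathscr{A}/\mathscr{B}$ (the quotient-tuple proposition in Section~\ref{Section. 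Combinatorial Review}), order ideals of $P_{\mathscr{A}/\mathscr{B}}$ correspond to arbitrary subsets of $\max P_{\mathscr{A}}$, so $P_{\mathscr{A}/\mathscr{B}}$ is the antichain on $\max P_{\mathscr{A}}$.

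In the case $|\max P_{\mathscr{A}}|\geq 2$, this antichain has more than one connected component, so Lemma~\ref{Lemma. Mixed discriminant for a connected component of the poset} forces $\mathring{D}_{\mathscr{A}/\mathscr{B}}=\emptyset$, and then Lemma~\ref{Lemma. Simplification of a mixed discriminant} yields $\mathring{D}_{\mathscr{A}}=\mathbb{C}_{\mathscr{B}}\bullet\mathring{D}_{\mathscr{A}/\mathscr{B}}=\emptyset$, as required.

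In the case of a unique maximal element $\alpha$, the quotient $\mathscr{A}/\mathscr{B}$ coincides with $\hat{\mathscr{B}}_{\alpha}$, which is irreducible. For irreducible BK-tuples the mixed discriminant coincides with the $\mathscr{A}$-discriminant (and with the Cayley discriminant), by \cite{cattani_mixed_2013,pokidkin_esterov_nodate} as discussed in the introduction, so $\mathring{D}_{\hat{\mathscr{B}}_{\alpha}}=D_{\hat{\mathscr{B}}_{\alpha}}$. Applying Lemma~\ref{Lemma. Simplification of a mixed discriminant} and then Proposition~\ref{Proposition. Discriminant's strata are Cayley discriminants} (noting that $(\alpha)=P_{\mathscr{A}}$ forces $\mathscr{B}_{(\alpha)}=\mathscr{A}$ and $\mathscr{B}_{(\alpha)\backslash\alpha}=\mathscr{B}$), I chain
\begin{equation*}
\mathring{D}_{\mathscr{A}} \;=\; \mathbb{C}_{\mathscr{B}}\bullet\mathring{D}_{\hat{\mathscr{B}}_{\alpha}} \;=\; \mathbb{C}_{\mathscr{B}}\bullet D_{\hat{\mathscr{B}}_{\alpha}} \;=\; D_{cay(\mathscr{A})},
\end{equation*}
which completes the argument.

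The main obstacle I anticipate is the poset-level step: verifying that $P_{\mathscr{A}/\mathscr{B}}$ really is the antichain on $\max P_{\mathscr{A}}$, i.e., that the quotient-tuple correspondence respects the poset structure and does not introduce new comparabilities between maximal elements. Once this is secured, the proof is essentially a direct application of the machinery already developed (BK-multiplication, simplification of mixed discriminants, and the irreducible-case theorem).
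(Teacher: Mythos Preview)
Your proof is correct and follows essentially the same route as the paper: quotient by the BK-subtuple indexed by $P_{\mathscr{A}}\setminus\max P_{\mathscr{A}}$ (the paper writes this as $\mathscr{C}=\mathscr{A}\setminus\bigcup_{\alpha\in\max P_{\mathscr{A}}}\mathscr{B}_{\alpha}$, which is the same object), observe that the quotient poset is an antichain on $\max P_{\mathscr{A}}$, apply Lemma~\ref{Lemma. Mixed discriminant for a connected component of the poset} when there are several maxima, and in the single-maximum case invoke the irreducible identification together with Proposition~\ref{Proposition. Discriminant's strata are Cayley discriminants}. Your worry about the antichain structure of $P_{\mathscr{A}/\mathscr{B}}$ is unfounded: it follows directly from the bijection of BK-subtuples you cite, exactly as you outline, and the paper simply asserts it without further comment.
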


\begin{proof}
By Lemma \ref{Lemma. Simplification of a mixed discriminant} and
coherency relations from Corollary \ref{Corollary. Coherency Relations},
we can simplify the mixed discriminant for the BK-tuple $\mathscr{A}$
to the mixed discriminant for the BK-tuple $\mathscr{A}/\mathscr{C}$,
where $\mathscr{C}=\mathscr{A}\backslash\underset{\alpha\in max\,\mathtt{P}_{\mathscr{A}}}{\cup}\mathscr{B}_{\alpha}$.
Then the poset $\mathtt{P}_{\mathscr{A}/\mathscr{C}}$ is a disjoint
union of $|max\,\mathtt{P}_{\mathscr{A}}|$ incomparable elements.
The mixed discriminant $\mathring{D}_{\mathscr{A}/\mathscr{C}}$ is
not empty only if the poset $\mathtt{P}_{\mathscr{A}/\mathscr{C}}$
consists of one element $\alpha$ by Lemma \ref{Lemma. Mixed discriminant for a connected component of the poset}.
In that case, the BK-tuple $\mathscr{A}/\mathscr{C}$ is irreducible,
and the three types of discriminants coincide. Therefore, using Proposition
\ref{Proposition. Discriminant's strata are Cayley discriminants}
for the BK-tuple $\mathscr{A}$, the mixed discriminant and the Cayley
discriminant are equal to $\mathbb{C}_{\mathscr{C}}\bullet D_{\mathscr{A}/\mathscr{C}}$.
\end{proof}
\begin{rem}
The paper \cite{cattani_mixed_2013} shows that if the Cayley discriminant
for a BK-tuple is a hypersurface, then the mixed discriminant is the
same hypersurface. Theorem \ref{Theorem. The mixed discriminant for a BK-tuple}/Theorem
\ref{Theorem. The mixed discriminant for a linearly dependent tuple}
provides the second proof of this result for nonlinear BK-tuples/dependent
tuples.
\end{rem}

\begin{thm}
\label{Theorem. The mixed discriminant for a linearly dependent tuple}The
mixed discriminant of a dependent tuple $\mathscr{A}$ is not empty
if and only if the tuple contains only one circuit $\mathscr{C}$.
Then the mixed discriminant is the resultant $R_{\mathscr{C}}$, and
it is a hypersurface.
\end{thm}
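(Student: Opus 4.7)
The plan is to mirror the strategies already used for Theorem \ref{Theorem. The mixed discriminant for a BK-tuple} (BK case) and Theorem \ref{Theorem. The Cayley discriminant for a linearly dependent tuple} (Cayley case for linearly dependent tuples), splitting $\mathscr{A}$ along its unique circuit and passing to the BK-quotient. I first record that any circuit $\mathscr{C}$ has defect exactly $-1$: removing any element produces an independent subtuple, so $\dim\langle\mathscr{C}\rangle \geq |\mathscr{C}|-1$, while linear dependence of $\mathscr{C}$ forces $\dim\langle\mathscr{C}\rangle \leq |\mathscr{C}|-1$. Combined with the uniqueness of the minimal minimal-defect subtuple $\mathscr{M}$ from Proposition \ref{Proposition. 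A/M  is linearly independent}, this shows that ``$\mathscr{A}$ contains only one circuit $\mathscr{C}$'' is equivalent to ``$\mathscr{M}=\mathscr{C}$ with $\delta(\mathscr{M}) = -1$''. In that case additivity of the defect under the quotient, $\delta(\mathscr{A}) = \delta(\mathscr{C}) + \delta(\mathscr{A}/\mathscr{C})$, together with $\delta(\mathscr{A}) = -1$ yields $\delta(\mathscr{A}/\mathscr{C}) = 0$, so $\mathscr{A}/\mathscr{C}$ is a BK-tuple.

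For the sufficiency direction I would take a generic $\Phi \in R_{\mathscr{C}} \subseteq \mathbb{C}_{\mathscr{A}}$ and exhibit a non-degenerate multiple root. The subsystem $\Phi_{\mathscr{C}}$ admits a common root $x \in T(\mathscr{C})$ by definition of $R_{\mathscr{C}}$, and the evaluation $ev_{x}\Phi_{\mathscr{A}\setminus\mathscr{C}}$ is a generic element of $\mathbb{C}_{\mathscr{A}/\mathscr{C}}$, whose $\mathrm{MV}(\mathscr{A}/\mathscr{C})>0$ roots $y \in T(\mathscr{A}/\mathscr{C})$ are guaranteed by the Kouchnirenko--Bernstein Theorem \ref{Theorem. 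Reformulation of Kouchnirenko-Bernstein theorem}. In the block decomposition induced by $T(\mathscr{A}) \cong T(\mathscr{C}) \times T(\mathscr{A}/\mathscr{C})$, the $\mathscr{C}$-rows of the gradient matrix at $(x,y)$ are concentrated in the $T(\mathscr{C})$-cotangent direction with rank $|\mathscr{C}|-1$ and the unique circuit relation has all weights non-zero, while the $\mathscr{A}\setminus\mathscr{C}$-rows contribute the invertible Jacobian of a generic BK-root in the transverse direction. Hence the full matrix has rank $|\mathscr{A}|-1$ with the unique relation supported on the circuit $\mathscr{C}$, and every proper linearly-independent subtuple of $\mathscr{A}$ has linearly-independent gradients at $(x,y)$. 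This is precisely the non-degeneracy criterion employed in the BK proofs, so $\Phi \in \mathring{D}_{\mathscr{A}}$; combined with $\mathring{D}_{\mathscr{A}} \subseteq D_{\mathscr{A}} = R_{\mathscr{C}}$ from Theorem \ref{Theorem. Discriminants for linearly dependent tuples}, we obtain $\mathring{D}_{\mathscr{A}} = R_{\mathscr{C}}$.

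For necessity, assume $\mathscr{A}$ has at least two circuits. Then $\delta(\mathscr{M}) \leq -2$ by the equivalences of the first paragraph, so at any common root of $\Phi_{\mathscr{A}}$ the $\mathscr{M}$-gradient matrix has kernel of dimension $-\delta(\mathscr{M}) \geq 2$. These independent relations, supported on two distinct circuits inside $\mathscr{M}$, inject into the kernel of the full $\mathscr{A}$-gradient matrix, so the rank drops by at least two and the configuration carries an extra forced degeneracy; by the same mechanism as in Lemma \ref{Lemma. Mixed discriminant for a connected component of the poset}, this blocks any non-degenerate multiple root and forces $\mathring{D}_{\mathscr{A}} = \emptyset$. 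The main obstacle I foresee is pinning down the precise non-degeneracy condition for overdetermined systems: it must permit the single circuit-forced dependency on $\mathscr{C}$ while rejecting any additional one, and must be consistent with the strict version invoked in Lemma \ref{Lemma. Simplification of a mixed discriminant} and Lemma \ref{Lemma. Mixed discriminant for a connected component of the poset}. Extracting two explicit distinct circuits from a minimal-defect subtuple with $\delta(\mathscr{M}) \leq -2$ is the companion combinatorial step, presumably drawn from the polymatroid material of \cite{pokidkin_sublattice_nodate}.
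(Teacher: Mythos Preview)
Your approach works in spirit but is considerably more elaborate than the paper's, and it contains one genuine error.

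The paper's proof is a direct matroid argument and never invokes $\mathscr{M}$, the quotient $\mathscr{A}/\mathscr{C}$, Kouchnirenko--Bernstein, or any Jacobian block analysis. For necessity it observes that if $\mathscr{A}$ contains more than one circuit then $\mathscr{A}\setminus A$ is linearly dependent for \emph{every} $A\in\mathscr{A}$ (this is circuit elimination, part of the matroid structure cited from \cite{pokidkin_sublattice_nodate}); hence at any root the proper subtuple $\mathscr{A}\setminus A$ already has dependent differentials, and $\mathring D_{\mathscr{A}}=\emptyset$. For sufficiency it notes that with a single circuit $\mathscr{C}$ the tuple $\mathscr{A}\setminus C$ is linearly independent for every $C\in\mathscr{C}$, so every root of a system in $R_{\mathscr{C}}$ is non-degenerate multiple, giving $\mathring D_{\mathscr{A}}=R_{\mathscr{C}}=D_{\mathscr{A}}$. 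Your detours through $\delta(\mathscr{M})\le -2$ and the two-dimensional kernel, and through an explicit root construction with block Jacobian, reach the same endpoints but are unnecessary; the obstacles you flag (pinning down the non-degeneracy condition, extracting two circuits from $\mathscr{M}$) simply do not arise in the paper's argument.

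The actual error: you assert $\delta(\mathscr{A})=-1$ and deduce that $\mathscr{A}/\mathscr{C}$ is a BK-tuple. This is false in general. With a single circuit one only gets $\delta(\mathscr{A})\in\{-1,0\}$ under the paper's standing hypothesis $n\le k$; for example $A_1=A_2=\{0,e_1\}$, $A_3=\{0,e_1,e_2,e_3\}$ in $\mathbb Z^3$ has a unique circuit $\{A_1,A_2\}$ but $\delta(\mathscr{A})=0$, and then $\mathscr{A}/\mathscr{C}$ has positive defect. Proposition~\ref{Proposition. A/M  is linearly independent} gives only that $\mathscr{A}/\mathscr{C}$ is linearly \emph{independent}. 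Your sufficiency argument can be repaired (you need full row rank of the $\mathscr{A}\setminus\mathscr{C}$-block at a generic root, not invertibility), but the repair underscores that the quotient machinery is overkill here.
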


\begin{proof}
If a tuple $\mathscr{A}$ contains more than one circuit, then the
tuples $\mathscr{A}\backslash A$ are dependent for each set $A\in\mathscr{A}$.
This means that systems from the space $\mathbb{C}_{\mathscr{A}}$
cannot have a non-degenerate multiple root, and the mixed discriminant
is empty.

If the tuple $\mathscr{A}$ contains only one circuit $\mathscr{C}$,
then every tuple $\mathscr{A}\backslash C$ is independent for every
set $C\in\mathscr{C}$. Then every root for a polynomial system from
$\mathbb{C}_{\mathscr{A}}$ is a non-degenerate multiple root, and
the mixed discriminant coincides with the resultant $R_{\mathscr{C}}$.
Since circuits in the induced matroid always have the defect $-1$
\cite{pokidkin_combinatorics_2025}, the mixed discriminant is a hypersurface.
\end{proof}

\section{\label{Section. Degrees of components}Degrees of discriminants}
\begin{cor}
\textup{\label{Corollary. Degree of the mixed discriminant for a linearly dependent tuple}\cite{pedersen_product_1993}
}For a dependent tuple with the unique circuit $\mathscr{C}$, the
mixed discriminant has the degree $\underset{C\in\mathscr{C}}{\sum}\mathrm{MV}_{\overline{\langle\mathscr{C}\rangle}}(\mathscr{C}\backslash C)$.
\end{cor}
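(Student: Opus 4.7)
The plan is to reduce the statement to a known degree formula for the sparse resultant. By Theorem~\ref{Theorem. The mixed discriminant for a linearly dependent tuple}, the mixed discriminant of a linearly dependent tuple $\mathscr{A}$ is nonempty only when $\mathscr{A}$ contains a unique circuit $\mathscr{C}$, and in that case the mixed discriminant coincides with the sparse resultant $R_{\mathscr{C}}$ (a hypersurface). So the problem reduces to computing $\deg R_{\mathscr{C}}$ as a hypersurface in $\mathbb{C}_{\mathscr{A}}$.

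The polynomial $R_{\mathscr{C}}$ depends only on the coefficients indexed by $\mathscr{C}$: it is pulled back along the trivial linear projection $\mathbb{C}_{\mathscr{A}}\twoheadrightarrow\mathbb{C}_{\mathscr{C}}$, so its degree as a hypersurface is unchanged under this pullback, and one can work inside $\mathbb{C}_{\mathscr{C}}$. By definition of a circuit, $\mathscr{C}$ is minimally linearly dependent with $\delta(\mathscr{C})=-1$, so $|\mathscr{C}|=\dim\overline{\langle\mathscr{C}\rangle}+1$ and every proper subtuple of $\mathscr{C}$ is linearly independent; this is precisely Sturmfels' notion of an essential tuple of defect $-1$ in the ambient lattice $\overline{\langle\mathscr{C}\rangle}$.

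At this point the classical Pedersen-Sturmfels degree formula \cite{pedersen_product_1993} applies to the essential tuple $\mathscr{C}\subset\overline{\langle\mathscr{C}\rangle}$ and states exactly $\deg R_{\mathscr{C}}=\sum_{C\in\mathscr{C}}\mathrm{MV}_{\overline{\langle\mathscr{C}\rangle}}(\mathscr{C}\backslash C)$, which is the claim. The only point to verify is that the normalization of the mixed volume in \cite{pedersen_product_1993} (by the covolume of the ambient lattice) agrees with the convention $\mathrm{MV}_{\overline{\langle\mathscr{C}\rangle}}$ used in this paper; this is a routine matching of conventions and is the closest thing to an obstacle in the argument.
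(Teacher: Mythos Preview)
Your proposal is correct and matches the paper's intended argument: the paper gives no explicit proof for this corollary, simply citing \cite{pedersen_product_1993}, and your write-up is precisely the natural unpacking of that citation together with Theorem~\ref{Theorem. The mixed discriminant for a linearly dependent tuple}. The reduction $\mathring{D}_{\mathscr{A}}=R_{\mathscr{C}}$, the observation that $\mathscr{C}$ is essential of defect $-1$ in $\overline{\langle\mathscr{C}\rangle}$, and the appeal to the Pedersen--Sturmfels degree formula are exactly what the citation is meant to convey.
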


For an essential dependent tuple $\mathscr{A}$, the $\mathscr{A}$-discriminant
and the Cayley discriminant are equal to the sparse resultant $R_{\mathscr{A}}$
by Corollary \ref{Corollary. Discriminants for linearly dependent tuples}
and Corollary \ref{Corollary. Cayley discriminant for an essential linearly dependent tuple}.
We present two formulas to compute the degree of a sparse resultant.
Notice that the sparse resultant of an independent tuple equals the
sparse resultant of its maximal essential subtuple.
\begin{prop}
\textup{\label{Proposition. I Degree of the sparse resultant for a linearly dependent tuple}}For
an essential dependent tuple $\mathscr{A}$, the sparse resultant
$R_{\mathscr{A}}$ has the degree $\underset{\mathscr{B}\in\mathcal{B}(\mathscr{A})}{\sum}\mathrm{MV}_{\overline{\langle\mathscr{A}\rangle}}(\mathscr{B}),$
where $\mathcal{B}(\mathscr{A})$ is the set of bases of the induced
matroid on $\mathscr{A}$.
\end{prop}

\begin{proof}
By Corollary 6.5 in \cite{dickenstein_tropical_2007}, the degree
of the sparse resultant $R_{\mathscr{A}}$ is the sum of mixed volumes
$\mathrm{MV}_{\overline{\langle\mathscr{A}\rangle}}(\mathscr{B})$
over all subtuples $\mathscr{B}$ of cardinality $dim\,\langle\mathscr{A}\rangle$.
For the induced matroid on $\mathscr{A}$, the bases are BK-tuples
of cardinality $dim\,\langle\mathscr{A}\rangle$ according to \cite{pokidkin_combinatorics_2025}.
Hence all subtuples of cardinality $dim\,\langle\mathscr{A}\rangle$
are either dependent or bases and BK-tuples. Therefore, in the formula
from \cite{dickenstein_tropical_2007}, only bases of the induced
matroid contribute to the degree of the sparse resultant $R_{\mathscr{A}}$.
\end{proof}
Let $\mathscr{A}$ be an essential dependent tuple of defect $-\delta$.
Construct the lattice $L=\overline{\langle\mathscr{A}\rangle}\times\mathbb{Z}^{\delta}$
and the new tuple $\mathscr{A}^{\flat}=(A\times\triangle_{\delta},\,A\in\mathscr{A})$,
where $\triangle_{\delta}$ is the standard simplex in $\mathbb{Z}^{\delta}$,
and each set $A\times\triangle_{\delta}$ is considered in the lattice
$L$. Notice that the tuple $\mathscr{A}^{\flat}$ has zero defect.
\begin{prop}
\textup{\label{Proposition. II Degree of the sparse resultant for a linearly dependent tuple}(Esterov)}
For an essential dependent tuple $\mathscr{A}$, the sparse resultant
$R_{\mathscr{A}}$ has the degree $\mathrm{MV}_{L}(\mathscr{A}^{\flat})$.
\end{prop}

\begin{proof}
The degree of the resultant $R_{\mathscr{A}}$ with codimension $\delta$
equals the number of intersection points with a generic $\delta$-dimensional
vector subspace $\Pi$ in $\mathbb{C}_{\mathscr{A}}$. Choose a parametrization
for the subspace $\Pi$: $\Phi=\text{\ensuremath{\Phi}}^{0}+\stackrel[i=1]{\delta}{\sum}y_{i}\Phi^{i}$
for some fixed choice of points $\Phi^{0},...,\Phi^{\delta}$ from
$\mathbb{C}_{\mathscr{A}}$ and new variables $y_{1},...,y_{\delta}$.
Each coefficient of the system $\Phi$ is a linear function of the
new variables $y$, and the parametrization provides a polynomial
system from the support $\mathbb{C}_{\mathscr{A}^{\flat}}$. By the
Kouchnirenko-Bernstein theorem, a generic system from $\mathbb{C}_{\mathscr{A}^{\flat}}$
has $\mathrm{MV}_{L}(\mathscr{A}^{\flat})$ solutions. Each solution
corresponds to an intersection of the hyperplane $\Pi$ with the resultant.

Since the tuple $\mathscr{A}$ is essential, the tuple $\mathscr{A}^{\flat}$
is irreducible because every proper subtuple $\mathscr{B}^{\flat}$
has a positive defect: $\delta(\mathscr{B}^{\flat})=\delta(\mathscr{B})-\delta(\mathscr{A})>0$.
This observation ensures that the mixed volume $\mathrm{MV}_{L}(\mathscr{A}^{\flat})$
is always positive.
\end{proof}
For a face $A'$ of a set $A$ from a lattice$,$ consider the projection
of saturated sublattices $s:\ensuremath{\overline{\langle A\rangle}}\rightarrow\ensuremath{\overline{\langle A\rangle}}/\text{\ensuremath{\overline{\langle A'\rangle}}}$
and the numbers $c^{A',A}$, which are differences of integer volumes
between sets $s(A)$ and $s(A\backslash A')$ in the lattice $s(\ensuremath{\overline{\langle A\rangle}}).$
Set $c^{A,A}=1$ and $c^{A',A}=0$ if $A'$ is not a face of $A$.
Then we can define a square matrix $C$ with entries $c^{A'',A'}$
for all possible faces $A''$, $A'$, and build the inverse matrix
$C^{-1}$ with entries $e^{A'',A'}$, called \textit{Euler obstructions}
(see \cite{esterov_newton_2010}).
\begin{thm}
\label{Theorem. Matsui-Takeuchi, Degree of the Cayley discriminant}\textup{\cite{matsui_geometric_2011}
For a finite set $A$, the $A$-discriminant of codimension $\delta$
has the degree
\[
deg\,D_{A}=\underset{A'\subseteq A}{\sum}e^{A',A}\,\left(\binom{dim\,\langle A'\rangle-1}{\delta}+(-1)^{\delta+1}(\delta+1)\right)\,\mathrm{Vol}(A').
\]
}
\end{thm}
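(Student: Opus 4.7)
The plan is to realize $D_A$ as the projection of the projective dual variety of the toric embedding $X_A \hookrightarrow \mathbb{P}^{|A|-1}$ associated to $A$, and to compute its degree via the Matsui-Takeuchi characteristic-class machinery. The first step is to identify the conormal variety $\mathcal{N}(X_A) \subset X_A \times \check{\mathbb{P}}^{|A|-1}$ and observe that $D_A$ is the closure of its image under the second projection. By the standard polar-class interpretation, the degree of $D_A$ equals the intersection of $\mathcal{N}(X_A)$ with the pullback of a generic linear subspace of codimension $\delta+1$ in $\check{\mathbb{P}}^{|A|-1}$, corrected by the Euler characteristic of a generic fibre when $\delta>0$.

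The heart of the proof is the characteristic-cycle decomposition along the orbit stratification of $X_A$. Torus orbits $O(A')$ are indexed by the faces $A' \subseteq A$, and by the MacPherson-Kashiwara index theorem the characteristic cycle of the constant sheaf decomposes as
\[
\mathrm{Ch}(\mathbb{C}_{X_A}) = \sum_{A' \subseteq A} e^{A',A}\,[\overline{T^*_{O(A')} X_A}].
\]
Each summand contributes additively to the degree of the dual, so it suffices to evaluate the pairing of each conormal-closure class with a generic linear subspace of $\check{\mathbb{P}}^{|A|-1}$. The volume factor $\mathrm{Vol}(A')$ enters as the projective degree of the orbit closure $\overline{O(A')}$ in its own linear span, while the binomial $\binom{\dim\langle A'\rangle-1}{\delta}$ appears as the appropriate coefficient in the Segre expansion of the projection $\overline{T^*_{O(A')} X_A} \to \check{\mathbb{P}}^{|A|-1}$, i.e.\ as a Chern class of the universal quotient bundle restricted to the stratum.

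I would organize the calculation by first checking the formula in the hypersurface case $\delta=0$, where the projection $\mathcal{N}(X_A)\to\check{\mathbb{P}}^{|A|-1}$ is generically birational onto $D_A$; here only the binomial summand survives and the expression reduces to the Gelfand-Kapranov-Zelevinsky formula refined by Dickenstein-Feichtner-Sturmfels, providing a sanity check. Next I would bootstrap to arbitrary $\delta$ by pulling back via a generic complementary subspace, so that cutting $\check{\mathbb{P}}^{|A|-1}$ by a generic codimension-$\delta$ plane intersects $D_A$ transversely in a hypersurface of the plane, to which the hypersurface formula can be applied stratum by stratum.

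The main obstacle is the correction term $(-1)^{\delta+1}(\delta+1)$. For $\delta>0$, a generic fibre of $\mathcal{N}(X_A)\to D_A$ is a $\delta$-dimensional projective variety, and its topological Euler characteristic enters the index formula with an alternating sign. The explicit value $\delta+1$ is the Euler characteristic of $\mathbb{P}^\delta$, and the sign $(-1)^{\delta+1}$ comes from the standard alternation in the Chern-MacPherson calculus applied to the projection of the conormal variety; making this step rigorous requires a careful analysis of the excess intersection when the target has the wrong dimension. Once this fibre correction is pinned down, collecting all stratum contributions and summing over faces $A'\subseteq A$ yields the stated formula.
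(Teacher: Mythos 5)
First, note that the paper offers no proof of this statement: it is quoted from Matsui--Takeuchi \cite{matsui_geometric_2011} and used as a black box, so there is no internal argument to compare yours against. Judged on its own, your outline names the right machinery (the conormal variety of $X_A$, the characteristic-cycle decomposition into conormal closures of torus orbits weighted by the Euler obstructions $e^{A',A}$, polar classes), but it contains a concrete off-by-one error that propagates through the whole argument. In the theorem as stated, $\delta$ is the \emph{codimension} of $D_A$, so the hypersurface case is $\delta=1$, where the coefficient is $\binom{\dim\langle A'\rangle-1}{1}+(-1)^{2}\cdot 2=\dim\langle A'\rangle+1$: the correction term does \emph{not} vanish there, contrary to your claim that only the binomial summand survives in the base case. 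Likewise, the conormal variety $\mathcal{N}(X_A)$ always has dimension $|A|-2$, so the generic fibre of $\mathcal{N}(X_A)\to D_A$ has dimension $\delta-1$, and by the linearity of generic contact loci it is a $\mathbb{P}^{\delta-1}$ with Euler characteristic $\delta$, not a $\mathbb{P}^{\delta}$ with Euler characteristic $\delta+1$. Your sanity check therefore tests the wrong case, and the heuristic offered for the term $(-1)^{\delta+1}(\delta+1)$ does not produce the right number.

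Second, the proposed bootstrap --- cut $\check{\mathbb{P}}^{|A|-1}$ by a generic codimension-$\delta$ plane and apply the hypersurface formula ``stratum by stratum'' --- would fail: a generic linear section of $D_A$ is not the discriminant of any point configuration, so there is no hypersurface formula available for it, and the intersection numbers of the individual conormal closures $\overline{T^*_{O(A')}X_A}$ with pullbacks of generic linear subspaces of the dual space are not the quantities appearing in the formula (these projections can drop dimension by different amounts on different strata, which is exactly the excess-intersection issue you defer rather than resolve). The actual derivation in \cite{matsui_geometric_2011} runs the other way: it invokes Ernstr\"om's generalized class formula, which expresses $\deg D_A$ as an alternating sum of binomial coefficients times Euler characteristics of generic \emph{linear sections of the source}, $X_A\cap H_1\cap\dots\cap H_j$, and then computes those Euler characteristics via the index theorem together with the Bernstein--Khovanskii--Kushnirenko theorem; the binomial $\binom{\dim\langle A'\rangle-1}{\delta}$ and the term $(-1)^{\delta+1}(\delta+1)$ both arise from resumming that double sum over $j$ and over faces, not as a Chern-class coefficient attached to each stratum separately. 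To turn your sketch into a proof you would need to import (or reprove) that class formula, at which point the fibre--Euler-characteristic heuristic is no longer needed.
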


For a codimension one $A$-discriminant, the formula coincides with
results \cite{gelfand_discriminants_1994,dickenstein_tropical_2007}.
In particular, this theorem describes degrees of Cayley discriminants
for the Cayley set $cay(\mathscr{A})$ of a tuple $\mathscr{A}.$
If the tuple $\mathscr{A}$ is dependent with a maximal essential
subtuple $\mathscr{M}$, then we use the formula for the codimension
$\delta=-\delta(\mathscr{M})$. If the tuple $\mathscr{A}$ is BK,
then we use the formula for the codimension $\delta=2l+n$ by Theorem
\ref{Theorem. Cayley Discriminants}. Moreover, for a BK-tuple $\mathscr{A}$,
we can use Theorem \ref{Theorem. Matsui-Takeuchi, Degree of the Cayley discriminant}
to compute the degrees for components of the $\mathscr{A}$-discriminant. 
\begin{cor}
\label{Corollary. Degrees for components of the discriminant for a BK-tuple}For
a BK-tuple $\mathscr{A}$ and $\alpha\in P_{\mathscr{A}},$ the component
$C(\mathscr{B}_{\alpha})$ has the degree 
\begin{align*}
deg\,C(\mathscr{B}_{\alpha}) & =\begin{cases}
\underset{A\subseteq cay(\mathscr{B}_{(\alpha)})}{\sum}e^{A,cay(\mathscr{B}_{(\alpha)})}\,(dim\,\langle A\rangle+1)\,\mathrm{Vol}(A), & \text{if }\hat{\mathscr{B}}_{\alpha}\text{ is a nir,}\\
\frac{1}{2}\underset{A\subseteq cay(\mathscr{B}_{(\alpha)})}{\sum}e^{A,cay(\mathscr{B}_{(\alpha)})}\,(dim\,\langle A\rangle+1)\,(dim\,\langle A\rangle-4)\,\mathrm{Vol}(A), & \text{if }\hat{\mathscr{B}}_{\alpha}\text{ is a lir.}
\end{cases}
\end{align*}
\end{cor}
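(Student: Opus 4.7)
The plan is to reduce the computation of $\deg C(\mathscr{B}_\alpha)$ to the degree of a single $A$-discriminant, namely the Cayley discriminant of the order ideal $\mathscr{B}_{(\alpha)}$, and then to invoke the Matsui--Takeuchi formula (Theorem \ref{Theorem. Matsui-Takeuchi, Degree of the Cayley discriminant}) with the codimension fixed by Theorem \ref{Theorem. Discriminant for a BK-tuple}. The two branches of the case analysis (nir vs.\ lir) will correspond to $\delta=1$ vs.\ $\delta=2$ in Matsui--Takeuchi.

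First, I would realize $C(\mathscr{B}_\alpha)$ as a Cartesian product of $D_{cay(\mathscr{B}_{(\alpha)})}$ with an affine space. The Separation Lemma \ref{Lemma. Separation Lemma} gives
\[
C(\mathscr{B}_\alpha)=\mathbb{C}_{\mathscr{B}_{(\alpha)\backslash\alpha}}\bullet D_{\hat{\mathscr{B}}_\alpha}\bullet\mathbb{C}_{\mathscr{A}/\mathscr{B}_{(\alpha)}},
\]
and the first two factors combine by Proposition \ref{Proposition. Discriminant's strata are Cayley discriminants} to the Cayley discriminant $D_{cay(\mathscr{B}_{(\alpha)})}$. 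Corollary \ref{Corollay. BK-multiplicaiton for XoC} then turns the remaining BK-multiplication by $\mathbb{C}_{\mathscr{A}/\mathscr{B}_{(\alpha)}}$ into the genuine Cartesian product with $\mathbb{C}_{\mathscr{A}\backslash\mathscr{B}_{(\alpha)}}$. Since the projective degree of an affine variety is unchanged by Cartesian product with an affine space (a generic linear subspace of complementary dimension in the bigger ambient space intersects the cylinder in the same number of points), this yields $\deg C(\mathscr{B}_\alpha)=\deg D_{cay(\mathscr{B}_{(\alpha)})}$.

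Next, I would apply Theorem \ref{Theorem. Matsui-Takeuchi, Degree of the Cayley discriminant} to the finite set $A:=cay(\mathscr{B}_{(\alpha)})$. Because BK-multiplication of $D_{\hat{\mathscr{B}}_\alpha}$ by a full ambient space $\mathbb{C}_{\mathscr{B}_{(\alpha)\backslash\alpha}}$ preserves codimension, Theorem \ref{Theorem. Irreducible discriminants} pins down $\delta:=\mathrm{codim}\,D_{cay(\mathscr{B}_{(\alpha)})}$ to be $1$ when $\hat{\mathscr{B}}_\alpha$ is nir and $2$ when $\hat{\mathscr{B}}_\alpha$ is lir. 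Writing $d=\dim\langle A'\rangle$ for a face $A'\subseteq A$, the bracketed prefactor $\binom{d-1}{\delta}+(-1)^{\delta+1}(\delta+1)$ in Matsui--Takeuchi collapses to $(d-1)+2=d+1$ when $\delta=1$, matching the nir line of the corollary, and to $\binom{d-1}{2}-3=\tfrac{(d-1)(d-2)-6}{2}=\tfrac{(d+1)(d-4)}{2}$ when $\delta=2$, matching the lir line.

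The only genuinely non-routine step is the product identification $C(\mathscr{B}_\alpha)=D_{cay(\mathscr{B}_{(\alpha)})}\times\mathbb{C}_{\mathscr{A}\backslash\mathscr{B}_{(\alpha)}}$, since it turns the closed BK-multiplication into a literal Cartesian product; however, Corollary \ref{Corollay. BK-multiplicaiton for XoC} supplies exactly this identification once the second factor is a full linear space, and after that everything is the binomial simplification above.
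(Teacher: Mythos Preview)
Your proposal is correct and follows the same approach the paper has in mind: the paper states this corollary without a proof environment, relying on the paragraph just before it, which simply says that Theorem~\ref{Theorem. Matsui-Takeuchi, Degree of the Cayley discriminant} can be applied to compute degrees of the components via Proposition~\ref{Proposition. Discriminant's strata are Cayley discriminants}. You have made explicit the steps the paper leaves implicit---identifying $C(\mathscr{B}_\alpha)$ with $D_{cay(\mathscr{B}_{(\alpha)})}\times\mathbb{C}_{\mathscr{A}\backslash\mathscr{B}_{(\alpha)}}$ via Corollary~\ref{Corollay. BK-multiplicaiton for XoC}, and carrying out the binomial simplification for $\delta=1,2$---and these are all sound.
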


\begin{rem}
\label{Remark. Degree of the mixed discriminant for a BK-tuple}1)
Corollary \ref{Corollary. Degrees for components of the discriminant for a BK-tuple}
describes degrees of mixed discriminants for BK-tuples $\mathscr{A}$
such that the poset $\mathtt{P}_{\mathscr{A}}$ has a unique maximal
element $\alpha$, $\mathscr{A}=\mathscr{B}_{(\alpha)}$.

2) For a nir $\hat{\mathscr{B}}_{\alpha}$ and its face $A$, the
number $(dim\,\langle A\rangle+1)\,\mathrm{Vol}(A)$ is the total
degree of the $A$-Euler discriminant (see Corollary 1.9 \cite{esterov_discriminant_2013}).
\end{rem}

Every face of a Cayley set is a Cayley set for some collection of
faces. Esterov expressed volumes of Cayley sets (see Lemma 1.7 \cite{esterov_newton_2010})
and mixed volumes for tuples of Cayley sets (see \cite{esterov_multiplicities_2012})
via mixed volumes of their generating sets. The computation of volumes
simplifies the Matsui-Takeuchi degree formula (see Corollary 1.14
\cite{esterov_discriminant_2013}).

Denote the mixed volume of $n$ finite sets by a monomial $A_{1}\cdot...\cdot A_{n}$
and the integer simplex $\{a\in\mathbb{Z}_{\geq0}^{k}\,|\;a_{1}+...+a_{k}=m\}$
by $\triangle_{k}(m)$.
\begin{cor}
\textup{\cite{esterov_newton_2010}} For a tuple of finite sets $\mathscr{A}=(A_{1},...,A_{k})$,
the volume of the Cayley set $cay(\mathscr{A})$ in its $n$-dimensional
linear span equals
\[
\mathrm{Vol}(cay(\mathscr{A}))=\underset{a\in\triangle_{k}(n)}{\sum}A_{1}^{a_{1}}\cdot...\cdot A_{k}^{a_{k}}.
\]
\end{cor}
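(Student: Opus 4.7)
The plan is to reduce the identity to a direct application of the Cayley trick for polytopes followed by integration over the Cayley simplex. View the convex hull of $cay(\mathscr{A})$ as a subset of $M_{\mathbb{R}}\times\mathbb{R}^{k}$ and project onto the second factor. The image is the standard simplex $\Delta_{k-1}\subset\mathbb{R}^{k}$, and the fiber over $\lambda$ is a translate of the Minkowski sum $\sum_{i=1}^{k}\lambda_{i}A_{i}$. By Fubini,
\[
\mathrm{Vol}\bigl(cay(\mathscr{A})\bigr)=\int_{\Delta_{k-1}}\mathrm{Vol}\Bigl(\sum_{i=1}^{k}\lambda_{i}A_{i}\Bigr)d\lambda,
\]
where the Cayley volume on the left and the Minkowski-sum volume on the right are taken in the paper's (BKK/lattice-normalized) convention, and $d\lambda$ denotes the induced lattice measure on the simplex.

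Next, I would expand the integrand via Minkowski's theorem as a polynomial in $\lambda$, with the coefficient of each monomial $\lambda^{a}$ expressed as a multinomial $\binom{|a|}{a}$ times the mixed-volume monomial $A_{1}^{a_{1}}\cdots A_{k}^{a_{k}}$. Substituting into the Fubini integral and evaluating each monomial via the Dirichlet identity
\[
\int_{\Delta_{k-1}}\lambda_{1}^{a_{1}}\cdots\lambda_{k}^{a_{k}}\,d\lambda=\frac{a_{1}!\cdots a_{k}!}{(|a|+k-1)!}
\]
shows that the multinomial coefficient from Minkowski cancels the factorials in the Dirichlet numerator, so every composition $a$ enters with the same constant. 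This constant matches exactly the ratio between the Euclidean and lattice-normalized volumes of the $n$-dimensional Cayley polytope, producing the claimed sum $\sum_{a\in\triangle_{k}(n)}A_{1}^{a_{1}}\cdots A_{k}^{a_{k}}$ with coefficient one in front of each term.

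The main obstacle is the bookkeeping of normalizations. One must verify that the Fubini measure on the Cayley slice $\{\sum\lambda_{i}=1\}$, the measure chosen on $\Delta_{k-1}$, the BKK normalization $A^{n}=n!\,\mathrm{Vol}_{\mathrm{Eucl}}(A)$ of the mixed-volume monomials coming from Theorem \ref{Theorem. Reformulation of Kouchnirenko-Bernstein theorem}, and the lattice volume on the Cayley linear span all combine coherently, so that the constant from the Minkowski/Dirichlet step is absorbed without residue. The $k=1$ case serves as a sanity check: there $cay(\mathscr{A})=A_{1}$, only the composition $a=(n)$ survives, and the identity degenerates to $\mathrm{Vol}(A_{1})=A_{1}^{n}$, which is precisely the BKK normalization. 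Once the conventions are anchored by this case, the general $k$ follows from the combinatorial cancellation described above.
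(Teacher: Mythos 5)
The paper offers no proof of this corollary at all --- it is quoted verbatim from Esterov's Lemma 1.7 in \cite{esterov_newton_2010} --- so there is nothing internal to compare against; your Fubini-over-the-Cayley-simplex strategy is the standard route to this identity and is in principle the right one. The skeleton is sound: the Cayley polytope projects onto a $(k-1)$-dimensional simplex with fiber $\sum_{i}\lambda_{i}\,\mathrm{conv}(A_{i})$ over $\lambda$, Minkowski's theorem expands the fiber volume as $\sum_{a}\binom{|a|}{a}\lambda^{a}V(A_{1}^{a_{1}}\cdots A_{k}^{a_{k}})$ in Euclidean mixed volumes, and the Dirichlet integral $\int\lambda^{a}\,d\lambda=a_{1}!\cdots a_{k}!/(|a|+k-1)!$ makes the coefficient of every composition equal to $|a|!/(|a|+k-1)!$, which is exactly absorbed by the lattice normalizations on both sides. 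This computation genuinely closes.

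The gap is in the degree of the compositions, which you never reconcile and which your $k=1$ sanity check cannot detect (it is the one case where the two readings coincide). If $n$ is the dimension of the linear span of $cay(\mathscr{A})$, as the statement says, then the fibers of your projection are $(n-k+1)$-dimensional, the Minkowski expansion produces mixed volumes of $n-k+1$ sets, and your argument proves
\[
\mathrm{Vol}(cay(\mathscr{A}))=\underset{a\in\triangle_{k}(n-k+1)}{\sum}A_{1}^{a_{1}}\cdot...\cdot A_{k}^{a_{k}},
\]
not the displayed sum over $\triangle_{k}(n)$. A direct check with $k=2$ and $A_{1}=A_{2}=\{0,1\}\subset\mathbb{Z}$ (Cayley set a unit square, lattice volume $2$) shows the sum over $\triangle_{2}(n)$ with $n=2$ the Cayley dimension does not even typecheck as mixed volumes in $\langle A_{1}+A_{2}\rangle\cong\mathbb{Z}$, whereas the sum over $\triangle_{2}(1)$ gives $1+1=2$. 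So either the statement's $n$ must be read as $\dim\langle A_{1}+\cdots+A_{k}\rangle$ (Esterov's convention, in which case the Cayley span has dimension $n+k-1$ and your fibers are $n$-dimensional as needed), or the formula as printed is off by $k-1$. Your proof asserts the conclusion in the paper's indexing without performing the count that would have exposed this; fixing it is a one-line dimension count, but as written the final step does not follow from the computation that precedes it.
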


\begin{cor}
\label{Corollary. Degree of the Cayley discriminant for a BK-tuple}For
a BK-tuple $\mathscr{A}$, the Cayley discriminant has the degree
$\underset{\alpha\in max\,\mathtt{P}_{\mathscr{A}}}{\prod}deg\,C(\mathscr{B}_{\alpha})$.
\end{cor}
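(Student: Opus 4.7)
The plan is to read off the result from the complete-intersection description of the Cayley discriminant in Theorem \ref{Theorem. Cayley Discriminants} and apply Bezout's multiplicativity of degree.

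That theorem presents $D_{cay(\mathscr{A})}$ as the complete intersection $\bigcap_{\alpha\in\max P_{\mathscr{A}}}F_\alpha$ with $F_\alpha=\mathbb{C}_{\mathscr{A}\backslash\mathscr{B}_\alpha}\bullet D_{\hat{\mathscr{B}}_\alpha}$. The first task is to identify each $F_\alpha$ with the stratum $C(\mathscr{B}_\alpha)$, so in particular $\deg F_\alpha=\deg C(\mathscr{B}_\alpha)$. For maximal $\alpha$, the complement $\mathscr{A}\backslash\mathscr{B}_\alpha$ decomposes into $\mathscr{B}_{(\alpha)\backslash\alpha}$ together with pieces whose supports, by the non-intersection of linear spans of irreducible BK-subtuples, are disjoint from those of $\hat{\mathscr{B}}_\alpha$. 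Lemma \ref{Lemma. Fiber change} together with Corollary \ref{Corollay. BK-multiplicaiton for XoC} factor these disjoint-support pieces out as a Cartesian product with $\mathbb{C}_{\mathscr{A}/\mathscr{B}_{(\alpha)}}$, leaving $\mathbb{C}_{\mathscr{B}_{(\alpha)\backslash\alpha}}\bullet D_{\hat{\mathscr{B}}_\alpha}\bullet\mathbb{C}_{\mathscr{A}/\mathscr{B}_{(\alpha)}}$, which is precisely $C(\mathscr{B}_\alpha)$ by the Separation Lemma \ref{Lemma. Separation Lemma}.

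The second step is Bezout for the complete intersection. The cleanest verification passes through the reduction used in the proof of Theorem \ref{Theorem. Cayley Discriminants} to the semi-irreducible quotient $\mathscr{A}/\mathscr{C}$ with $\mathscr{C}=\mathscr{A}\backslash\bigcup_{\alpha\in\max P_{\mathscr{A}}}\mathscr{B}_\alpha$: there $D_{cay(\mathscr{A}/\mathscr{C})}=\prod_\alpha D_{\hat{\mathscr{B}}_\alpha}$ is a literal Cartesian product of varieties in disjoint coordinate blocks $\mathbb{C}_{\hat{\mathscr{B}}_\alpha}$, so degrees of cylinder factors multiply tautologically. Lifting back to $\mathbb{C}_{\mathscr{A}}$ via the BK-multiplication by $\mathbb{C}_{\mathscr{C}}$ is performed one irreducible quotient at a time via Lemma \ref{Lemma. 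Simplification of a Cayley discriminant} and the coherency relations of Corollary \ref{Corollary. Coherency Relations}; this lift acts uniformly on each factor and on their intersection, so the product identity $\deg D_{cay(\mathscr{A})}=\prod_\alpha\deg C(\mathscr{B}_\alpha)$ is preserved from the semi-irreducible setting.

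The main obstacle is the last verification, namely that the BK-multiplication lift preserves Bezout's multiplicativity, i.e.\ that the lifted factors $F_\alpha$ continue to meet properly in a suitable projective completion and no excess intersection at infinity appears. Since the disjoint-coordinate transversality available on $\mathbb{C}_{\mathscr{A}/\mathscr{C}}$ is carried through the linear lift component-by-component, no new common directions are introduced at infinity, and classical projective Bezout delivers the claimed formula.
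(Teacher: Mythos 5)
Your proof is correct and takes essentially the same route as the paper, whose entire argument is the one-line observation that Theorem \ref{Theorem. Cayley Discriminants} exhibits $D_{cay(\mathscr{A})}$ as a complete intersection of the strata $C(\mathscr{B}_{\alpha})$, $\alpha\in\max P_{\mathscr{A}}$, so Bezout multiplicativity applies. Your extra reduction to the semi-irreducible quotient and the ``lift preserves multiplicativity'' step are not needed (and are the least rigorous part, since BK-multiplication does not preserve degrees of the individual factors); the direct appeal to the complete-intersection structure in $\mathbb{C}_{\mathscr{A}}$ already matches the paper.
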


\begin{proof}
We have a complete intersection by Theorem \ref{Theorem. Cayley Discriminants}.
\end{proof}
\begin{prop}
For a lir $\mathscr{A}$ of cardinality $n$, the $\mathscr{A}$-discriminant
has the degree $\frac{n\,(n+1)}{2}$.
\end{prop}

\begin{proof}
For a linear BK-tuple $\mathscr{A},$ the discriminant $D_{\mathscr{A}}$
is a determinantal variety. One of the proofs is written in Example
19.10 \cite{harris_algebraic_1992}.
\end{proof}
\begin{problem*}
The description of components and degrees of mixed and $\mathscr{A}$-discriminants
is still open for underdetermined polynomial systems consisting of
more than one equation.
\end{problem*}

\subsubsection*{Acknowledgment}

I am deeply grateful to Alexander Esterov for supervision and to my
family for support.

\subsubsection*{Funding}

The study was partially supported by the HSE University Basic Research
Program and by the Foundation for the Advancement of Theoretical Physics
and Mathematics \textquotedblleft BASIS\textquotedblright .

\begin{adjustwidth}{-0.0in}{-0.0in}
	\sloppy
	\printbibliography

@article{dandrea_poisson_2015,
	title = {A {Poisson} formula for the sparse resultant},
	volume = {110},
	copyright = {http://doi.wiley.com/10.1002/tdm\_license\_1.1},
	issn = {00246115},
	url = {http://doi.wiley.com/10.1112/plms/pdu069},
	doi = {10.1112/plms/pdu069},
	language = {en},
	number = {4},
	urldate = {2026-01-21},
	journal = {Proceedings of the London Mathematical Society},
	author = {D’Andrea, Carlos and Sombra, Martín},
	month = apr,
	year = {2015},
	note = {Arxiv.org/abs/1310.6617v2},
	pages = {932--964},
}

@article{esterov_multivariate_2016,
	title = {Multivariate {Abel}–{Ruffini}},
	volume = {365},
	issn = {0025-5831, 1432-1807},
	url = {http://link.springer.com/10.1007/s00208-015-1309-6},
	doi = {10.1007/s00208-015-1309-6},
	language = {en},
	number = {3-4},
	urldate = {2026-01-16},
	journal = {Mathematische Annalen},
	author = {Esterov, Alexander and Gusev, Gleb},
	month = aug,
	year = {2016},
	pages = {1091--1110},
}

@misc{pokidkin_combinatorics_2025,
	title = {Combinatorics behind discriminants of polynomial systems},
	url = {http://arxiv.org/abs/2509.02963v2},
	doi = {10.48550/arXiv.2509.02963v2},
	abstract = {In the 1970s, Kouchnirenko, Bernstein, and Khovanskii noticed that the geometry of a generic system of polynomial equations is determined by the geometry of its Newton polytopes. In the 1990s, Gelfand, Kapranov, Zelevinsky, and Sturmfels extended this observation to discriminants and resultants of generic polynomials. Particularly, well-known open questions about the irreducibility of discriminants and sets of solutions of such systems lead to questions about the corresponding geometric property of tuples of polytopes: Minkowski linear independence. To address these questions, we encode Minkowski linear independence into a finite matroid and characterize its bases, circuits, and cyclics. The obtained combinatorial results are used in the subsequent work to describe components of discriminants for generic square polynomial systems.},
	urldate = {2025-09-06},
	publisher = {arXiv},
	author = {Pokidkin, Vladislav},
	month = sep,
	year = {2025},
	note = {ArXiv:2509.02963v2},
	keywords = {Mathematics - Combinatorics},
}

@article{amendola_maximum_2019,
	title = {The maximum likelihood degree of toric varieties},
	volume = {92},
	issn = {0747-7171},
	url = {https://www.sciencedirect.com/science/article/pii/S0747717118300476},
	doi = {10.1016/j.jsc.2018.04.016},
	abstract = {We study the maximum likelihood (ML) degree of toric varieties, known as discrete exponential models in statistics. By introducing scaling coefficients to the monomial parameterization of the toric variety, one can change the ML degree. We show that the ML degree is equal to the degree of the toric variety for generic scalings, while it drops if and only if the scaling vector is in the locus of the principal A-determinant. We also illustrate how to compute the ML estimate of a toric variety numerically via homotopy continuation from a scaled toric variety with low ML degree. Throughout, we include examples motivated by algebraic geometry and statistics. We compute the ML degree of rational normal scrolls and a large class of Veronese-type varieties. In addition, we investigate the ML degree of scaled Segre varieties, hierarchical log-linear models, and graphical models.},
	urldate = {2025-09-19},
	journal = {Journal of Symbolic Computation},
	author = {Améndola, Carlos and Bliss, Nathan and Burke, Isaac and Gibbons, Courtney R. and Helmer, Martin and Hoşten, Serkan and Nash, Evan D. and Rodriguez, Jose Israel and Smolkin, Daniel},
	month = may,
	year = {2019},
	note = {ArXiv:1703.02251v1},
	keywords = {-discriminant, Maximum likelihood degree, Toric variety},
	pages = {222--242},
}

@incollection{vanhove_feynman_2019,
	address = {Cham},
	title = {Feynman {Integrals}, {Toric} {Geometry} and {Mirror} {Symmetry}},
	isbn = {9783030044800},
	url = {https://doi.org/10.1007/978-3-030-04480-0_17},
	abstract = {This expository text is about using toric geometry and mirror symmetry for evaluating Feynman integrals. We show that the maximal cut of a Feynman integral is a GKZ hypergeometric series. We explain how this allows to determine the minimal differential operator acting on the Feynman integrals. We illustrate the method on sunset integrals in two dimensions at various loop orders. The graph polynomials of the multi-loop sunset Feynman graphs lead to reflexive polytopes containing the origin and the associated variety are ambient spaces for Calabi-Yau hypersurfaces. Therefore the sunset family is a natural home for mirror symmetry techniques. We review the evaluation of the two-loop sunset integral as an elliptic dilogarithm and as a trilogarithm. The equivalence between these two expressions is a consequence of (1) the local mirror symmetry for the non-compact Calabi-Yau three-fold obtained as the anti-canonical hypersurface of the del Pezzo surface of degree 6 defined by the sunset graph polynomial and (2) that the sunset Feynman integral is expressed in terms of the local Gromov-Witten prepotential of this del Pezzo surface.},
	language = {en},
	urldate = {2025-09-19},
	booktitle = {Elliptic {Integrals}, {Elliptic} {Functions} and {Modular} {Forms} in {Quantum} {Field} {Theory}},
	publisher = {Springer International Publishing},
	author = {Vanhove, Pierre},
	editor = {Blümlein, Johannes and Schneider, Carsten and Paule, Peter},
	month = jan,
	year = {2019},
	note = {ArXiv:1807.11466v2},
	pages = {415--458},
}

@article{steffens_mixed_2010,
	title = {Mixed volume techniques for embeddings of {Laman} graphs},
	volume = {43},
	issn = {09257721},
	url = {https://linkinghub.elsevier.com/retrieve/pii/S0925772109000911},
	doi = {10.1016/j.comgeo.2009.04.004},
	language = {en},
	number = {2},
	urldate = {2024-03-31},
	journal = {Computational Geometry},
	author = {Steffens, Reinhard and Theobald, Thorsten},
	month = feb,
	year = {2010},
	note = {ArXiv:0805.4120v2},
	pages = {84--93},
}

@article{mizera_landau_2022,
	title = {Landau discriminants},
	volume = {2022},
	issn = {1029-8479},
	url = {https://link.springer.com/10.1007/JHEP08(2022)200},
	doi = {10.1007/JHEP08(2022)200},
	abstract = {A 
              bstract 
             
             
              Scattering amplitudes in quantum field theories have intricate analytic properties as functions of the energies and momenta of the scattered particles. In perturbation theory, their singularities are governed by a set of nonlinear polynomial equations, known as 
              Landau equations 
              , for each individual Feynman diagram. The singularity locus of the associated Feynman integral is made precise with the notion of the 
              Landau discriminant 
              , which characterizes when the Landau equations admit a solution. In order to compute this discriminant, we present approaches from classical elimination theory, as well as a numerical algorithm based on homotopy continuation. These methods allow us to compute Landau discriminants of various Feynman diagrams up to 3 loops, which were previously out of reach. For instance, the Landau discriminant of the envelope diagram is a reducible surface of degree 45 in the three-dimensional space of kinematic invariants. We investigate geometric properties of the Landau discriminant, such as irreducibility, dimension and degree. In particular, we find simple examples in which the Landau discriminant has codimension greater than one. Furthermore, we describe a numerical procedure for determining which parts of the Landau discriminant lie in the physical regions. In order to study degenerate limits of Landau equations and bounds on the degree of the Landau discriminant, we introduce 
              Landau polytopes 
              and study their facet structure. Finally, we provide an efficient numerical algorithm for the computation of the number of master integrals based on the connection to algebraic statistics. The algorithms used in this work are implemented in the open-source Julia package Landau.jl available at 
              https://mathrepo.mis.mpg.de/Landau/ 
              .},
	language = {en},
	number = {8},
	urldate = {2025-09-19},
	journal = {Journal of High Energy Physics},
	author = {Mizera, Sebastian and Telen, Simon},
	month = aug,
	year = {2022},
	note = {ArXiv:2109.08036v2},
	pages = {200},
}

@article{matsui_geometric_2011,
	title = {A geometric degree formula for {A}-discriminants and {Euler} obstructions of toric varieties},
	volume = {226},
	issn = {00018708},
	url = {https://linkinghub.elsevier.com/retrieve/pii/S0001870810003476},
	doi = {10.1016/j.aim.2010.08.020},
	language = {en},
	number = {2},
	urldate = {2024-03-31},
	journal = {Advances in Mathematics},
	author = {Matsui, Yutaka and Takeuchi, Kiyoshi},
	month = jan,
	year = {2011},
	note = {ArXiv:0807.3163v5},
	pages = {2040--2064},
}

@article{matsubara-heo_four_2023,
	title = {Four lectures on {Euler} integrals},
	issn = {2590-1990},
	url = {https://scipost.org/10.21468/SciPostPhysLectNotes.75},
	doi = {10.21468/SciPostPhysLectNotes.75},
	abstract = {These lecture notes provide a self-contained introduction to Euler integrals, which are frequently encountered in applications. In particle physics, they arise as Feynman integrals or string amplitudes. Our four selected topics demonstrate the diverse mathematical techniques involved in the study of Euler integrals, including polyhedral geometry, very affine varieties, differential equations, and computational algebra.},
	urldate = {2025-09-19},
	journal = {SciPost Physics Lecture Notes},
	author = {Matsubara-Heo, Saiei-Jaeyeong and Mizera, Sebastian and Telen, Simon},
	month = oct,
	year = {2023},
	note = {ArXiv:2306.13578v2},
	pages = {75},
}

@article{furukawa_combinatorial_2021,
	title = {A combinatorial description of dual defects of toric varieties},
	volume = {23},
	issn = {0219-1997, 1793-6683},
	url = {https://www.worldscientific.com/doi/abs/10.1142/S0219199720500017},
	doi = {10.1142/S0219199720500017},
	abstract = {From a finite set in a lattice, we can define a toric variety embedded in a projective space. In this paper, we give a combinatorial description of the dual defect of the toric variety using the structure of the finite set as a Cayley sum with suitable conditions. We also interpret the description geometrically.},
	language = {en},
	number = {01},
	urldate = {2024-03-31},
	journal = {Communications in Contemporary Mathematics},
	author = {Furukawa, Katsuhisa and Ito, Atsushi},
	month = feb,
	year = {2021},
	note = {ArXiv:1605.05801v2},
	pages = {2050001},
}

@article{esterov_systems_2015,
	title = {Systems of equations with a single solution},
	volume = {68},
	issn = {07477171},
	url = {https://linkinghub.elsevier.com/retrieve/pii/S0747717114000753},
	doi = {10.1016/j.jsc.2014.09.007},
	language = {en},
	urldate = {2024-03-31},
	journal = {Journal of Symbolic Computation},
	author = {Esterov, Alexander and Gusev, Gleb},
	month = may,
	year = {2015},
	note = {ArXiv:1211.6763v2},
	pages = {116--130},
}

@article{esterov_multiplicities_2012,
	title = {Multiplicities of degenerations of matrices and mixed volumes of {Cayley} polyhedra},
	volume = {6},
	issn = {19492006},
	url = {http://www.journalofsing.org/volume6/article4.html},
	doi = {10.5427/jsing.2012.6d},
	urldate = {2025-01-24},
	journal = {Journal of Singularities},
	author = {Esterov, Alexander},
	month = jul,
	year = {2012},
	note = {ArXiv:1205.4344v1},
	pages = {27--36},
}

@article{dickenstein_iterated_2023,
	title = {Iterated and mixed discriminants},
	volume = {7},
	issn = {2415-6302, 2415-6310},
	url = {https://ems.press/doi/10.4171/jca/68},
	doi = {10.4171/jca/68},
	abstract = {Classical work by Salmon and Bromwich classified singular intersections of two quadric surfaces. The basic idea of these results was already pursued by Cayley in connection with tangent intersections of conics in the plane and used by Schäfli for the study of hyperdeterminants. More recently, the problem has been revisited with similar tools in the context of geometric modeling and a generalization to the case of two higher dimensional quadric hypersurfaces was given by Ottaviani. We propose and study a generalization of this question for systems of Laurent polynomials with support on a fixed point configuration. 
             
              In the non-defective case, the closure of the locus of coefficients giving a non-degenerate multiple root of the system is defined by a polynomial called the 
              mixed discriminant 
              . We define a related polynomial called the multivariate 
              iterated discriminant 
              , generalizing the classical Schäfli method for hyperdeterminants. This iterated discriminant is easier to compute and we prove that it is always divisible by the mixed discriminant. We show that tangent intersections can be computed via iteration if and only if the singular locus of a corresponding dual variety has sufficiently high codimension. We also study when point configurations corresponding to Segre–Veronese varieties and to the lattice points of planar smooth polygons, have their iterated discriminant equal to their mixed discriminant.},
	number = {1},
	urldate = {2024-12-09},
	journal = {Journal of Combinatorial Algebra},
	author = {Dickenstein, Alicia and Di Rocco, Sandra and Morrison, Ralph},
	month = may,
	year = {2023},
	note = {ArXiv:2101.11571v2},
	pages = {45--81},
}

@incollection{dokken_plane_2014,
	address = {Cham},
	title = {Plane {Mixed} {Discriminants} and {Toric} {Jacobians}},
	volume = {10},
	isbn = {9783319086347 9783319086354},
	url = {https://link.springer.com/10.1007/978-3-319-08635-4_6},
	language = {en},
	urldate = {2024-12-09},
	booktitle = {{SAGA} – {Advances} in {ShApes}, {Geometry}, and {Algebra}},
	publisher = {Springer International Publishing},
	author = {Dickenstein, Alicia and Emiris, Ioannis Z. and Karasoulou, Anna},
	editor = {Dokken, Tor and Muntingh, Georg},
	month = jan,
	year = {2014},
	note = {ArXiv:1304.5809v1},
	pages = {105--121},
}

@article{dickenstein_tropical_2007,
	title = {Tropical discriminants},
	volume = {20},
	issn = {0894-0347, 1088-6834},
	url = {https://www.ams.org/jams/2007-20-04/S0894-0347-07-00562-0/},
	doi = {10.1090/S0894-0347-07-00562-0},
	abstract = {Tropical geometry is used to develop a new approach to the theory of discriminants and resultants in the sense of Gel 
               
                 
                   
                     
                       
                      ′ 
                     
                    ’ 
                   
                 
               
              fand, Kapranov and Zelevinsky. The tropical 
               
                 
                   
                    A 
                    A 
                   
                 
               
              -discriminant is the tropicalization of the dual variety of the projective toric variety given by an integer matrix 
               
                 
                   
                    A 
                    A 
                   
                 
               
              . This tropical algebraic variety is shown to coincide with the Minkowski sum of the row space of 
               
                 
                   
                    A 
                    A 
                   
                 
               
              and the tropicalization of the kernel of 
               
                 
                   
                    A 
                    A 
                   
                 
               
              . This leads to an explicit positive formula for all the extreme monomials of any 
               
                 
                   
                    A 
                    A 
                   
                 
               
              -discriminant.},
	language = {en},
	number = {4},
	urldate = {2023-11-06},
	journal = {Journal of the American Mathematical Society},
	author = {Dickenstein, Alicia and Feichtner, Eva and Sturmfels, Bernd},
	month = apr,
	year = {2007},
	note = {ArXiv:0510126v3},
	pages = {1111--1133},
}

@article{di_rocco_projective_2006,
	title = {Projective {Duality} of {Toric} {Manifolds} and {Defect} {Polytopes}},
	volume = {93},
	issn = {0024-6115, 1460-244X},
	url = {https://www.cambridge.org/core/product/identifier/S0024611505015686/type/journal_article},
	doi = {10.1017/S0024611505015686},
	abstract = {Non-singular toric embeddings with dual defect are classified. The associated polytopes, called defect polytopes, are proven to be the class of Delzant integral polytopes for which a combinatorial invariant vanishes. The structure of a defect polytope is described.},
	language = {en},
	number = {1},
	urldate = {2024-03-31},
	journal = {Proceedings of the London Mathematical Society},
	author = {Di Rocco, Sandra},
	month = jul,
	year = {2006},
	note = {ArXiv:math/0305150v2},
	pages = {85--104},
}

@incollection{huh_likelihood_2014,
	address = {Cham},
	title = {Likelihood {Geometry}},
	isbn = {9783319048703},
	url = {https://doi.org/10.1007/978-3-319-04870-3_3},
	abstract = {We study the critical points of monomial functions over an algebraic subset of the probability simplex. The number of critical points on the Zariski closure is a topological invariant of that embedded projective variety, known as its maximum likelihood degree. We present an introduction to this theory and its statistical motivations. Many favorite objects from combinatorial algebraic geometry are featured: toric varieties, A-discriminants, hyperplane arrangements, Grassmannians, and determinantal varieties. Several new results are included, especially on the likelihood correspondence and its bidegree. This article represents the lectures given by the second author at the CIME-CIRM course on Combinatorial Algebraic Geometry at Levico Terme in June 2013.},
	language = {en},
	urldate = {2025-09-19},
	booktitle = {Combinatorial {Algebraic} {Geometry}: {Levico} {Terme}, {Italy} 2013, {Editors}: {Sandra} {Di} {Rocco}, {Bernd} {Sturmfels}},
	publisher = {Springer International Publishing},
	author = {Huh, June and Sturmfels, Bernd},
	editor = {Conca, Aldo and Di Rocco, Sandra and Draisma, Jan and Huh, June and Sturmfels, Bernd and Viviani, Filippo},
	month = jan,
	year = {2014},
	note = {ArXiv:1305.7462v2},
	keywords = {Euler Characteristic, Hyperplane Arrangement, Projective Variety, Toric Variety, Zariski Closure},
	pages = {63--117},
}

@article{gelfand_hypergeometric_1989,
	title = {Hypergeometric functions and toric varieties},
	volume = {23},
	copyright = {http://www.springer.com/tdm},
	issn = {0016-2663, 1573-8485},
	url = {http://link.springer.com/10.1007/BF01078777},
	doi = {10.1007/BF01078777},
	language = {en},
	number = {2},
	urldate = {2025-09-19},
	journal = {Functional Analysis and Its Applications},
	author = {Gelfand, Israel M. and Zelevinskii, Andrei V. and Kapranov, Mikhail M.},
	month = apr,
	year = {1989},
	pages = {94--106},
}

@book{cox_toric_2011,
	title = {Toric {Varieties}},
	isbn = {9780821848197},
	abstract = {Toric varieties form a beautiful and accessible part of modern algebraic geometry. This book covers the standard topics in toric geometry; a novel feature is that each of the first nine chapters contains an introductory section on the necessary background material in algebraic geometry. Other topics covered include quotient constructions, vanishing theorems, equivariant cohomology, GIT quotients, the secondary fan, and the minimal model program for toric varieties. The subject lends itself to rich examples reflected in the 134 illustrations included in the text. The book also explores connections with commutative algebra and polyhedral geometry, treating both polytopes and their unbounded cousins, polyhedra. There are appendices on the history of toric varieties and the computational tools available to investigate nontrivial examples in toric geometry. Readers of this book should be familiar with the material covered in basic graduate courses in algebra and topology, and to a somewhat lesser degree, complex analysis. In addition, the authors assume that the reader has had some previous experience with algebraic geometry at an advanced undergraduate level. The book will be a useful reference for graduate students and researchers who are interested in algebraic geometry, polyhedral geometry, and toric varieties.},
	language = {en},
	publisher = {American Mathematical Society},
	author = {Cox, David A. and Little, John B. and Schenck, Henry K.},
	month = jan,
	year = {2011},
}

@book{sturmfels_solving_2002,
	series = {Conference {Board} of the {Mathematical} {Sciences} regional conference series in mathematics},
	title = {Solving {Systems} of {Polynomial} {Equations}},
	volume = {97},
	isbn = {9780821889411},
	url = {https://books.google.ru/books?id=N9c8bWxkz9gC},
	publisher = {American Mathematical Society},
	author = {Sturmfels, Bernd},
	month = oct,
	year = {2002},
}

@article{gelfand_generalized_1990,
	title = {Generalized {Euler} integrals and {A}-hypergeometric functions},
	volume = {84},
	copyright = {https://www.elsevier.com/tdm/userlicense/1.0/},
	issn = {00018708},
	url = {https://linkinghub.elsevier.com/retrieve/pii/000187089090048R},
	doi = {10.1016/0001-8708(90)90048-R},
	language = {en},
	number = {2},
	urldate = {2025-09-19},
	journal = {Advances in Mathematics},
	author = {Gelfand, Israel M. and Kapranov, Mikhail M. and Zelevinsky, Andrei V.},
	month = dec,
	year = {1990},
	pages = {255--271},
}

@article{jensen_computing_2013,
	title = {Computing tropical resultants},
	volume = {387},
	copyright = {https://www.elsevier.com/tdm/userlicense/1.0/},
	issn = {00218693},
	url = {https://linkinghub.elsevier.com/retrieve/pii/S002186931300210X},
	doi = {10.1016/j.jalgebra.2013.03.031},
	language = {en},
	urldate = {2024-12-26},
	journal = {Journal of Algebra},
	author = {Jensen, Anders and Yu, Josephine},
	month = aug,
	year = {2013},
	note = {ArXiv:1109.2368v2},
	pages = {287--319},
}

@article{esterov_galois_2019,
	title = {Galois theory for general systems of polynomial equations},
	volume = {155},
	issn = {0010-437X, 1570-5846},
	url = {http://arxiv.org/abs/1801.08260},
	doi = {10.1112/S0010437X18007868},
	abstract = {We prove that the monodromy group of a reduced irreducible square system of general polynomial equations equals the symmetric group. This is a natural first step towards the Galois theory of general systems of polynomial equations, because arbitrary systems split into reduced irreducible ones upon monomial changes of variables. In particular, our result proves the multivariate version of the Abel--Ruffini theorem: the classification of general systems of equations solvable by radicals reduces to the classification of lattice polytopes of mixed volume 4 (which we prove to be finite in every dimension). We also notice that the monodromy of every general system of equations is either symmetric or imprimitive, similarly to what Sottile and White conjectured in Schubert calculus. The proof is based on a new result of independent importance regarding dual defectiveness of systems of equations: the discriminant of a reduced irreducible square system of general polynomial equations is a hypersurface unless the system is linear up to a monomial change of variables.},
	number = {2},
	urldate = {2021-01-26},
	journal = {Compositio Mathematica},
	author = {Esterov, Alexander},
	month = feb,
	year = {2019},
	note = {ArXiv:1801.08260v3},
	keywords = {14H05, 14H30, 20B15, 52B20, 58K10, Mathematics - Algebraic Geometry, Mathematics - Number Theory},
	pages = {229--245},
}

@article{esterov_discriminant_2013,
	title = {Discriminant of system of equations},
	volume = {245},
	issn = {00018708},
	url = {https://linkinghub.elsevier.com/retrieve/pii/S0001870813002363},
	doi = {10.1016/j.aim.2013.06.027},
	language = {en},
	urldate = {2024-03-31},
	journal = {Advances in Mathematics},
	author = {Esterov, Alexander},
	month = oct,
	year = {2013},
	note = {ArXiv:1110.4060v2},
	pages = {534--572},
}

@article{esterov_newton_2010,
	title = {Newton {Polyhedra} of {Discriminants} of {Projections}},
	volume = {44},
	issn = {0179-5376, 1432-0444},
	url = {http://link.springer.com/10.1007/s00454-010-9242-7},
	doi = {10.1007/s00454-010-9242-7},
	language = {en},
	number = {1},
	urldate = {2023-11-06},
	journal = {Discrete \& Computational Geometry},
	author = {Esterov, Alexander},
	month = jul,
	year = {2010},
	note = {ArXiv:0810.4996v3},
	pages = {96--148},
}

@article{esterov_determinantal_2007,
	title = {Determinantal {Singularities} and {Newton} {Polyhedra}},
	volume = {259},
	copyright = {http://www.springer.com/tdm},
	issn = {0081-5438, 1531-8605},
	url = {http://link.springer.com/10.1134/S0081543807040037},
	doi = {10.1134/S0081543807040037},
	language = {en},
	number = {1},
	urldate = {2025-01-20},
	journal = {Proceedings of the Steklov Institute of Mathematics},
	author = {Esterov, Alexander},
	month = dec,
	year = {2007},
	note = {ArXiv:0906.5097v2},
	pages = {16--34},
}

@article{cattani_mixed_2013,
	title = {Mixed discriminants},
	volume = {274},
	issn = {1432-1823},
	url = {https://doi.org/10.1007/s00209-012-1095-8},
	doi = {10.1007/s00209-012-1095-8},
	abstract = {The mixed discriminant of \$\$n\$\$Laurent polynomials in \$\$n\$\$variables is the irreducible polynomial in the coefficients which vanishes whenever two of the roots coincide. The Cayley trick expresses the mixed discriminant as an \$\$A\$\$-discriminant. We show that the degree of the mixed discriminant is a piecewise linear function in the Plücker coordinates of a mixed Grassmannian. An explicit degree formula is given for the case of plane curves.},
	language = {en},
	number = {3},
	urldate = {2024-03-31},
	journal = {Mathematische Zeitschrift},
	author = {Cattani, Eduardo and Cueto, María Angélica and Dickenstein, Alicia and Di Rocco, Sandra and Sturmfels, Bernd},
	month = aug,
	year = {2013},
	note = {ArXiv:1112.1012v1},
	keywords = {13P15, 14M25, 14T05, 52B20, A-discriminant, Cayley polytope, Degree, Matroid strata, Mixed Grassmannian, Multiple root, Tropical discriminant},
	pages = {761--778},
}

@book{polishchuk_quadratic_2005,
	address = {Providence, Rhode Island},
	series = {University lecture series},
	title = {Quadratic algebras},
	isbn = {9780821815809},
	language = {eng},
	number = {Volume 37},
	publisher = {American Mathematical Society},
	author = {Polishchuk, Alexander and Positselski, Leonid},
	year = {2005},
	doi = {10.1090/ulect/037},
}

@book{fulton_introduction_1993,
	series = {Annals of {Mathematics} {Studies}},
	title = {Introduction to toric varieties},
	volume = {131},
	isbn = {9780691000497},
	publisher = {Princeton University Press},
	author = {Fulton, William},
	month = jul,
	year = {1993},
	keywords = {Toric varieties},
}

@book{gelfand_discriminants_1994,
	title = {Discriminants, {Resultants}, and {Multidimensional} {Determinants}},
	isbn = {9780817647711},
	url = {http://link.springer.com/10.1007/978-0-8176-4771-1},
	urldate = {2024-03-31},
	publisher = {Birkhäuser Boston},
	author = {Gelfand, Israel M. and Kapranov, Mikhail M. and Zelevinsky, Andrei V.},
	month = may,
	year = {1994},
	doi = {10.1007/978-0-8176-4771-1},
}

@article{sturmfels_newton_1994,
	title = {On the {Newton} {Polytope} of the {Resultant}},
	volume = {3},
	issn = {09259899},
	url = {http://link.springer.com/10.1023/A:1022497624378},
	doi = {10.1023/A:1022497624378},
	number = {2},
	urldate = {2022-06-08},
	journal = {Journal of Algebraic Combinatorics},
	author = {Sturmfels, Bernd},
	month = apr,
	year = {1994},
	pages = {207--236},
}

@misc{crowley_bergman_2024,
	title = {The {Bergman} fan of a polymatroid},
	url = {http://arxiv.org/abs/2207.08764},
	doi = {10.48550/arXiv.2207.08764},
	abstract = {We introduce the Bergman fan of a polymatroid and prove that the Chow ring of the Bergman fan is isomorphic to the Chow ring of the polymatroid. Using the Bergman fan, we establish the K{\textbackslash}"ahler package for the Chow ring of the polymatroid, recovering and strengthening a result of Pagaria-Pezzoli.},
	urldate = {2024-12-27},
	publisher = {arXiv},
	author = {Crowley, Colin and Huh, June and Larson, Matt and Simpson, Connor and Wang, Botong},
	month = jan,
	year = {2024},
	keywords = {Mathematics - Algebraic Geometry, Mathematics - Combinatorics},
}

@article{bernshtein_number_1975,
	title = {The number of roots of a system of equations},
	volume = {9},
	issn = {1573-8485},
	url = {https://www.mathnet.ru/eng/faa2258},
	doi = {10.1007/BF01075595},
	language = {en},
	number = {3},
	urldate = {2021-03-14},
	journal = {Functional Analysis and Its Applications},
	author = {Bernshtein, David N.},
	month = jul,
	year = {1975},
	pages = {183--185},
}

@article{khovanskii_newton_2016,
	title = {Newton polytopes and irreducible components of complete intersections},
	volume = {80},
	issn = {1064-5632, 1468-4810},
	url = {http://stacks.iop.org/1064-5632/80/i=1/a=263?key=crossref.c1c0824f03ffb4d9e7f1f03c1d2b9421},
	doi = {10.1070/IM8307},
	number = {1},
	urldate = {2023-05-25},
	journal = {Izvestiya: Mathematics},
	author = {Khovanskii, Askold G.},
	month = feb,
	year = {2016},
	note = {Mathnet.ru/eng/im8307},
	pages = {263--284},
}

@article{pagaria_hodge_2023,
	title = {Hodge {Theory} for {Polymatroids}},
	volume = {2023},
	copyright = {https://academic.oup.com/journals/pages/open\_access/funder\_policies/chorus/standard\_publication\_model},
	issn = {1073-7928, 1687-0247},
	url = {https://academic.oup.com/imrn/article/2023/23/20118/7069141},
	doi = {10.1093/imrn/rnad001},
	abstract = {Abstract 
            We construct a Leray model for a discrete polymatroid with arbitrary building set and we prove a generalized Goresky–MacPherson formula. The first row of the model is the Chow ring of the polymatroid; we prove Poincaré duality, Hard Lefschetz, and Hodge–Riemann theorems for the Chow ring. Furthermore, we provide a relative Lefschetz decomposition with respect to the deletion of an element.},
	language = {en},
	number = {23},
	urldate = {2025-01-24},
	journal = {International Mathematics Research Notices},
	author = {Pagaria, Roberto and Pezzoli, Gian Marco},
	month = dec,
	year = {2023},
	note = {ArXiv:2105.04214v2},
	pages = {20118--20168},
}

@article{pedersen_product_1993,
	title = {Product formulas for resultants and {Chow} forms},
	volume = {214},
	copyright = {http://www.springer.com/tdm},
	issn = {0025-5874, 1432-1823},
	url = {http://link.springer.com/10.1007/BF02572411},
	doi = {10.1007/BF02572411},
	language = {en},
	number = {1},
	urldate = {2024-12-26},
	journal = {Mathematische Zeitschrift},
	author = {Pedersen, Paul and Sturmfels, Bernd},
	month = sep,
	year = {1993},
	pages = {377--396},
}

@article{curran_restriction_2007,
	title = {Restriction of {A}-{Discriminants} and {Dual} {Defect} {Toric} {Varieties}},
	volume = {42},
	issn = {07477171},
	url = {https://linkinghub.elsevier.com/retrieve/pii/S0747717106000678},
	doi = {10.1016/j.jsc.2006.02.006},
	language = {en},
	number = {1-2},
	urldate = {2023-11-06},
	journal = {Journal of Symbolic Computation},
	author = {Curran, Raymond and Cattani, Eduardo},
	month = jan,
	year = {2007},
	note = {ArXiv:0510615v2},
	pages = {115--135},
}

@article{cattani_non-splitting_2022,
	title = {Non-splitting {Flags}, {Iterated} {Circuits},  $\underline{\sigma}$-{Matrices} and {Cayley} {Configurations}},
	volume = {50},
	issn = {2305-221X, 2305-2228},
	url = {https://link.springer.com/10.1007/s10013-022-00554-7},
	doi = {10.1007/s10013-022-00554-7},
	language = {en},
	number = {3},
	urldate = {2023-11-06},
	journal = {Vietnam Journal of Mathematics},
	author = {Cattani, Eduardo and Dickenstein, Alicia},
	month = jul,
	year = {2022},
	note = {ArXiv:2105.00302v2},
	pages = {679--706},
}

@article{borger_defectivity_2020,
	title = {On defectivity of families of full-dimensional point configurations},
	volume = {7},
	issn = {2330-1511},
	url = {https://www.ams.org/bproc/2020-07-04/S2330-1511-2020-00046-3/},
	doi = {10.1090/bproc/46},
	abstract = {The mixed discriminant of a family of point configurations can be considered as a generalization of the 
               
                 
                   
                    A 
                    A 
                   
                 
               
              -discriminant of one Laurent polynomial to a family of Laurent polynomials. Generalizing the concept of defectivity, a family of point configurations is called defective if the mixed discriminant is trivial. Using a recent criterion by Furukawa and Ito we give a necessary condition for defectivity of a family in the case that all point configurations are full-dimensional. This implies the conjecture by Cattani, Cueto, Dickenstein, Di Rocco, and Sturmfels that a family of 
               
                 
                   
                    n 
                    n 
                   
                 
               
              full-dimensional configurations in 
               
                 
                   
                     
                       
                         
                          Z 
                         
                       
                      n 
                     
                    \{{\textbackslash}mathbb \{Z\}\}{\textasciicircum}n 
                   
                 
               
              is defective if and only if the mixed volume of the convex hulls of its elements is 
               
                 
                   
                    1 
                    1 
                   
                 
               
              .},
	language = {en},
	number = {4},
	urldate = {2024-03-31},
	journal = {Proceedings of the American Mathematical Society, Series B},
	author = {Borger, Christopher and Nill, Benjamin},
	month = may,
	year = {2020},
	note = {ArXiv:1801.07467v2},
	pages = {43--51},
}

@book{harris_algebraic_1992,
	title = {Algebraic {Geometry}: {A} {First} {Course}},
	isbn = {9780387977164},
	shorttitle = {Algebraic {Geometry}},
	abstract = {This book is intended to introduce students to algebraic geometry; to give them a sense of the basic objects considered, the questions asked about them, and the sort of answers one can expect to obtain. It thus emplasizes the classical roots of the subject. For readers interested in simply seeing what the subject is about, this avoids the more technical details better treated with the most recent methods. For readers interested in pursuing the subject further, this book will provide a basis for understanding the developments of the last half century, which have put the subject on a radically new footing. Based on lectures given at Brown and Harvard Universities, this book retains the informal style of the lectures and stresses examples throughout; the theory is developed as needed. The first part is concerned with introducing basic varieties and constructions; it describes, for example, affine and projective varieties, regular and rational maps, and particular classes of varieties such as determinantal varieties and algebraic groups. The second part discusses attributes of varieties, including dimension, smoothness, tangent spaces and cones, degree, and parameter and moduli spaces.},
	language = {en},
	publisher = {Springer Science \& Business Media},
	author = {Harris, Joe},
	month = sep,
	year = {1992},
	keywords = {Mathematics / Algebra / General, Mathematics / Geometry / Algebraic},
}

@book{stanley_enumerative_2011,
	title = {Enumerative {Combinatorics}: {Volume} 1},
	isbn = {9781139505369},
	shorttitle = {Enumerative {Combinatorics}},
	abstract = {Richard Stanley's two-volume basic introduction to enumerative combinatorics has become the standard guide to the topic for students and experts alike. This thoroughly revised second edition of Volume 1 includes ten new sections and more than 300 new exercises, most with solutions, reflecting numerous new developments since the publication of the first edition in 1986. The author brings the coverage up to date and includes a wide variety of additional applications and examples, as well as updated and expanded chapter bibliographies. Many of the less difficult new exercises have no solutions so that they can more easily be assigned to students. The material on P-partitions has been rearranged and generalized; the treatment of permutation statistics has been greatly enlarged; and there are also new sections on q-analogues of permutations, hyperplane arrangements, the cd-index, promotion and evacuation and differential posets.},
	language = {en},
	publisher = {Cambridge University Press},
	author = {Stanley, Richard P.},
	month = dec,
	year = {2011},
	keywords = {Mathematics / Algebra / General, Mathematics / Combinatorics, Mathematics / Discrete Mathematics},
}
	\fussy
\end{adjustwidth}

\end{document}